\def\N{{\mathbb{N}}}
\def\R{{\mathbb{R}}}
\begin{document}
\title[Infinite-dimensional multiobjective optimal control]{Infinite-dimensional multiobjective optimal control in continuous time}
\author[N. Hayek, H. Yilmaz]{ Naila Hayek \& Hasan Yilmaz}

\address{Naila Hayek: Laboratoire CRED EA 7321,\newline
Universit\'{e} Panth\'{e}on-Assas Paris 2, Paris, France.}
\email{naila.hayek@u-paris2.fr}
\address{Hasan Yilmaz:  Laboratoire SAMM EA 4543,\newline
Universit\'{e} Paris 1 Panth\'{e}on-Sorbonne, centre P.M.F.,\newline
90 rue de Tolbiac, 75634 Paris cedex 13,
France.}
\email{yilmaz.research@gmail.com}
\date{June, 29, 2022}
\numberwithin{equation}{section}
\newtheorem{theorem}{Theorem}[section]
\newtheorem{lemma}[theorem]{Lemma}
\newtheorem{example}[theorem]{Example}
\newtheorem{remark}[theorem]{Remark}
\newtheorem{definition}[theorem]{Definition}
\newtheorem{corollary}[theorem]{Corollary}
\newtheorem{proposition}[theorem]{Proposition}
\thispagestyle{empty} \setcounter{page}{1}
\maketitle
\vskip3mm
\noindent
{\bf Mathematical  Subject Classification 2010}:\\
{\bf Key Words}: Pontryagin maximum principle; Pareto optimality; multiobjective optimization; piecewise continuous functions.
\vskip4mm
%%%%%%%%%%%%%%%%%%%%%%%%%%%%%%%%%%%%%%%%%%%%%%%%%%%%%%%%%%%%%%%%%%%%%%%%%%%%%%%%
%
\begin{abstract}  

This paper studies multiobjective optimal control problems  in the continuous-time framework when the
space of states and the space of controls are infinite-dimensional and with lighter smoothness assumptions than the usual ones. The paper generalizes to the multiobjective case existing results for single-objective optimal control problems in that framework. The dynamics are governed by differential equations and a finite number of terminal equality and inequality constraints are present.  Necessary conditions of Pareto optimality are provided namely Pontryagin maximum principles in the strong form.  Sufficient conditions are also provided. 
%Other notions of Pareto optimality are defined when the infinite series do not necessarily converge and links with these unbounded cases are established. 
%\textbf{200} words.
\end{abstract}
\section{Introduction}
In this paper we study multiobjective optimal control problems, with open loop information structure, in the continuous-time framework, when the space of states and the space of controls are infinite-dimensional. We  derive necessary conditions and sufficient
conditions of Pareto optimality. We rely on lighter smoothness assumptions than the usual ones. The paper extends to the multiobjective case, results obtained for single-objective optimal control problems  in infinite dimension.

%Multiobjective optimal control was first studied by Zadeh \cite{Zad}  who was followed by Salukvadze \cite{Sal}, Yu and Leitmann \cite{Yu}, Ishizuka and Shimizu \cite{Ish}, Khanh and Nuong \cite{Khan}.

 In the continuous-time framework, some results of
% infinite-horizon
multiobjective optimal control problems can be found in  Bellaassali and Jourani \cite{BJ}, in Zhu \cite{Zh}, in Bonnel and Kaya \cite{BonKaya}, in Gramatovici \cite{G}, in de Oliveira and Nunes Silva \cite{Oliv}  and in references therein. \\
Differential games are widely used in economic theory, see \cite{L}, \cite{DJ}, \cite{RE} ,\cite{E} and \cite{S} and Pareto optimality plays a central role in analyzing these
problems. 
In the discrete-time framework, results on infinite-horizon multiobjective optimal control problems  can be found in Hayek \cite{H1} and \cite{H2}, \cite{H3}, in Ngo-Hayek \cite{NgoH}.
Bachir and Blot \cite{BB1}, \cite{BB2} extended infinite-horizon single-objective optimal control problems in the discrete-time framework, to the case of infinite-dimensional spaces of states and controls and Hayek \cite{H4} extended these results to multiobjective optimal control problems.

   In this paper we rely on the results of Blot and Yilmaz in \cite{BY} and \cite{BY1} to study multiobjective optimal control problems in an infinite-dimensional setting and in continuous time. We obtain necessary conditions of Pareto optimality under the form of Pontryagin Principles and we provide sufficient conditions of Pareto optimality.
   
We start by providing necessary conditions of optimality for Mayer  multiobjective optimal control problems and we deduce necessary conditions for Bolza problems with lighter smoothness assumptions. 
The Hadamard differential of a mapping between Banach spaces,
which is stronger than the G\^ateaux differential but weaker than the Fr\'echet differential, has been applied many times in the literature.  In finite dimension, the Hadamard differential  coincides with the Fr\'echet  differential , but for infinite-dimensional spaces the Fr\'echet differential  is much stronger, even for Lipschitz functions.
%On the other hand, if f is Lipschitz, then the Hadamard derivative coincideswith the Göateaux derivative

We provide different results relying on different constraint qualifications  namely  to obtain non trivial multipliers associated to the objective functions. 
For the sufficient conditions we follow Mangasarian \cite{M} and Seierstadt-Sydsaeter \cite{SS} and we rely on weaker assumptions than the usual ones namely the concavity at a point and the quasi-concavity at a point.

The plan of this paper is as follows. Section 2 is devoted to definitions and assumptions. In section 3 the problems are presented:  multiobjective optimal control problems governed by a differential equation when the
space of states and the space of controls are infinite-dimensional, in the continuous-time framework. The notions of Pareto optimality and weak Pareto optimality are defined.  In section 4 the theorems on necessary conditions of Pareto optimality are stated namely Pontryagin maximum principles in the strong form for a Mayer's problem and for a Bolza's problem.  In section 5 we give sufficient conditions. The proofs of the necessary conditions theorems are provided in section 6 and those of the sufficient ones in section 7.

\section{Definitions and assumptions}
We set $\N$ the set of positive integers and $\N^*=\N\setminus\{0\}$. $\R$ denotes the set of real numbers and $\R_+$ the set of non-negative real numbers.\\
When $X$ and $Y$ are Hausdorff space, $C^0(X,Y)$ denotes the space of continuous mappings from $X$ into $Y$.\\
When $Y$ be a Hausdorff space and $T\in \R_+^*=]0,+\infty[$. As in \cite{BY}, a function $u : [0,T] \rightarrow Y$ is called piecewise continuous when there exists a subdivision $0= \tau_0 < \tau_1 < ...< \tau_k < \tau_{k+1} = T$ such that
\begin{itemize}
\item For all $i \in \{0,...,k \}$, $u$ is continuous on $]\tau_i, \tau_{i+1}[$.
\item For all $i \in \{0,...,k \}$, the right-hand limit $u(\tau_i +)$ exists in $Y$.
\item For all $i \in \{1,...,k+1 \}$, the left-hand limit $u(\tau_i -)$ exists in $Y$.
\end{itemize}
The space of piecewise continuous mappings from $[0,T]$ to $Y$ is denoted by \\$PC^0([0,T], Y)$.\\
A function $u \in PC^0([0,T], Y)$ is called a normalized piecewise continuous function when moreover $u$ is right continuous on $[0,T[$ and when $u(T -) = u(T)$ cf. \cite{BY}.\\
We denote by $NPC^0([0,T],Y)$ the space of such functions.\\
As in \cite{BY}, when $Y$ is a real Banach space, a function $x : [0,T] \rightarrow Y$ is called piecewise continuously differentiable when $x \in C^0([0,T],Y )$ and there exists a subdivision $(\tau_i)_{0 \leq i \leq k+1}$ of $[0,T]$ such that the following conditions are fulfilled.
\begin {itemize}
\item For all $i \in \{0,...,k \}$, $x$ is continuously differentiable on $]\tau_i, \tau_{i+1}[$
\item For all $i \in \{0,...,k \}$, $x'(\tau_i +)$ exists in $Y$
\item For all $i \in \{1,...,k+1 \}$, $x'(\tau_i -)$ exists in $Y$
\end{itemize}
The $(\tau_i)_{1\le i\le k+1}$ are the corners of the function $x$.\\
We denote by $PC^1([0,T], Y)$ the space of such functions.\\
When $G$ is an open subset of $Y$, $PC^1([0,T], G)$ is the set of functions \\$x \in PC^1([0,T], Y)$ such that $x([0,T]) \subset G$.\\
When $x \in PC^1([0,T], Y)$ and $(\tau_i)_{0 \leq i \leq k+1}$ are the corners of the function $x$, we define the function $\underline{d}x : [0,T] \rightarrow Y$, called the extended derivative of $x$, by setting
\begin{equation}\label{eq21}
\underline{d}x(t) :=
\left\{
\begin{array}{ccl}
x'(t) & {\rm if} & t \in [0,T] \setminus \{ \tau_i : i \in \{0, ..., k+1 \}\}\\
x'(\tau_i +) & {\rm if} & t = \tau_i, i \in \{0,...,k \}\\
x'(T-) & {\rm if} & t = T.
\end{array}
\right.
\end{equation}
Notice that, contrary to the usual derivative of $x$, the extended derivative of $x$ is defined on $[0,T]$ all over.
Note that $\underline{d}x \in NPC^0([0,T], Y)$ and we have the following relation between $x$, $\underline{d}x$ and the Riemann integral:  
$$\text{for all } a<t \text{ in } [0,T],\, x(t)-x(a)=\int_{a}^{t} \underline{d}x(s)ds,$$
Besides, $\underline{d}$ is a bounded linear operator from $PC^1([0,T],Y)$ into $NPC^0([0,T],Y)$.\\
All these properties motivated the authors of \cite{BY} to introduce the notion of extended derivative for piecewise continuously differentiable functions. \\
When $X$ and $Y$ are real normed vector spaces, ${\mathcal L}(X,Y)$ denotes the space of the bounded linear mappings from $X$ into $Y$ and $X^*$ denotes the topological dual of $X$.\\
We denote by $\|\cdot\|_\mathcal{L}$ the usual norm of ${\mathcal L}(X,Y)$.\\ 
Let $G$ be a non-empty open subset of $X$, let $\mathfrak{f}: G \rightarrow Y$ be a mapping and let $x\in G$.\\
The mapping $\mathfrak{f}$ is called G\^ateaux differentiable at $x$ when there exists $D_G\mathfrak{f}(x)\in {\mathcal L}(X,Y)$ such that for all $h\in X$, $\lim_{t \rightarrow 0+} \frac{\mathfrak{f}(x+th)-\mathfrak{f}(x)}{t}=D_G\mathfrak{f}(x)\cdot h$.\\
Moreover, $D_G\mathfrak{f}(x)$ is called the G\^ateaux differential of $\mathfrak{f}$ at $x$. \\
We say that $\mathfrak{f}$ is Hadamard differentiable at $x$ when there exists $D_H\mathfrak{f}(x)\in {\mathcal L}(X,Y)$ such that for each $K$ compact in $X$, $\lim_{t \rightarrow 0+} \sup_{h \in K}\|\frac{\mathfrak{f}(x+th)-\mathfrak{f}(x)}{t}-D_H\mathfrak{f}(x)\cdot h\|=0$.\\ 
Moreover, $D_H\mathfrak{f}(x)$ is called the Hadamard differential of $\mathfrak{f}$ at $x$.\\
When $\mathfrak{f}$ is Hadamard differentiable at $x$, $\mathfrak{f}$ is also G\^ateaux differentiable at $x$ and $D_H\mathfrak{f}(x)=D_G\mathfrak{f}(x)$. But the converse is false in general when the dimension of $X$ is greater than 2. \\
Notice that Hadamard differentiability and G\^ateaux differentiability always coincide for locally Lipschitz functions in any normed vector space.
When it exists, $D_{F}\mathfrak{f}(x)$ denotes the Fr\'echet differential of $\mathfrak{f}$ at $x$.\\ 
When $\mathfrak{f}$ is Fr\'echet differentiable at $x$, $\mathfrak{f}$ is Hadamard differentiable at $x$ and $D_F\mathfrak{f}(x)=D_H\mathfrak{f}(x)$. But the converse is false in general when the dimension of $X$ is infinite. \\
When $X$ is a finite product of $n$ real normed spaces, $X = \prod_{1 \leq i \leq n} X_i$, if $k \in \{ 1,...,n \}$, $D_{F,k}\mathfrak{f}(x)$ (respectively $D_{H,k} \mathfrak{f}(x)$, respectively $D_{G,k} \mathfrak{f}(x)$) denotes the partial Fr\'echet (respectively Hadamard, respectively G\^ateaux) differential of $\mathfrak{f}$ at $x$ with respect to the $k$-th vector variable. \\
More information on these notions of differentials can be found in \cite{F}.\\
Next, we introduce definitions of notions of concavity at a point in infinite dimension cf. Mangasarian \cite{M} for the finite dimension. This concepts will be used for sufficient conditions.  \\
Let $\mathfrak{g}: G \rightarrow \R$ be a mapping. The mapping ${\mathfrak g}$ is said to be concave at $x$ when for all $y\in G$, for all $t\in[0,1]$ s.t. $(1-t)x+ty\in G$, ${\mathfrak g}((1-t)x+ty) \ge (1-t)\mathfrak{g}(x)+t\mathfrak{g}(y)$.\\ 
When ${\mathfrak g}$ is G\^ateaux differentiable at $x$, the function ${\mathfrak g}$ is said to be pseudo-concave at $x$ when for all $y\in G$, [$D_G{\mathfrak g}(x)\cdot (y-x) \le 0 \Rightarrow {\mathfrak g}(y) \le {\mathfrak g}(x)$].\\
The mapping ${\mathfrak g}$ is said to be quasi-concave at $x$ when for all $y\in G$, for all $t\in[0,1]$ s.t. $(1-t)x+ty\in G$,[$\mathfrak{g}(x) \le \mathfrak{g}(y) \Rightarrow {\mathfrak g}(x) \le {\mathfrak g}((1-t)x+ty)$].\\
When ${\mathfrak g}$ is G\^ateaux differentiable at $x$ and ${\mathfrak g}$ is quasi-concave at $x$, we have, for all $y\in G$, [${\mathfrak g}(y) \ge {\mathfrak g}(x) \Rightarrow D_G{\mathfrak g}(x)\cdot (y-x) \ge 0 $].   
\section{The multiobjective optimal control problems  }
Let $T\in ]0,+\infty[$, $E$ is a real Banach space, $\Omega$ is a non-empty subset of $E$, $U$ is a Hausdorff topological space and $\xi_0\in \Omega$.
We consider the functions $f:[0,T] \times \Omega \times U \rightarrow E$, $f_i^0:[0,T] \times \Omega \times U \rightarrow \R$ when $i\in \{1,...,l\}$, $g_i^0 :\Omega \rightarrow \R$ when $i\in\{1,...,l\}$, $g^\alpha: \Omega \rightarrow \R$ when $\alpha\in\{1,...,m\}$ and $h^\beta : \Omega \rightarrow \R$ when $\beta \in \{1,...,q\}$, when $(l,m,q)\in (\N^*)^3$.
For all $i\in\{1,...,l\}$ we consider also the function $J_i : PC^1([0,T],\Omega)\times NPC^0([0,T],U)\rightarrow \R$ defined by, for all $(x,u)\in PC^1([0,T],\Omega)\times NPC^0([0,T],U)$, $J_i(x,u):=g_i^0(x(T))+\int_{0}^{T} f_i^0(t,x(t),u(t))dt$.\\
With these elements, we can build the following multiobjective Bolza problem  
\[
({\mathcal B})
\left\{
\begin{array}{cl}
{\rm Maximize} & (J_1(x,u),...,J_l(x,u)) \\
{\rm subject \;  to} & x \in PC^1([0,T], \Omega), u \in NPC^0([0,T], U)\\
\null & \forall t\in [0,T],\, \underline{d}x(t) = f(t,x(t), u(t)), \; x(0) = \xi_0\\
\null & \forall \alpha \in\{ 1,...,m\}, \; \; g^{\alpha}(x(T)) \geq 0\\
\null & \forall \beta \in \{1,..., q\}, \; \; h^{\beta}(x(T)) = 0.
\end{array}\right.
\]
Our problem is a reformulation of the multiobjective classical Bolza problem where the controlled dynamical system is formulated as follows : $x'(t)=f(t,x(t),u(t))$ when $x'(t)$ exists, and the control function $u\in PC^0([0,T],U)$. In \cite{BY}, we explain that the present formulation is equivalent  to the classical one,  for the single-objective Bolza problem. By using the same reasoning, we remark that this formulation is also equivalent for the multiobjective Bolza problem.\\
When for all $i\in\{1,...,l\},\, f_i^0=0$, (${\mathcal B}$) is called a multiobjective Mayer problem and it is denoted by (${\mathcal M}$).\\  
We denote by $Adm({\mathcal B})$ (respectively $Adm({\mathcal M})$) the set of the admissible processes of $({\mathcal B})$ (respectively $({\mathcal M})$).\\
It is clear that $Adm({\mathcal B})=Adm({\mathcal M})$.
When $(x,u)$ is an admissible process for $(\mathcal{B})$ or $(\mathcal{M})$, we consider the following constraint qualifications, when the functions defining  the terminal constraints and the terminal parts of the criterion are Hadamard differentiable at $x(T)$.
\[
(QC_0)
\left\{
\begin{array}{l}
{\rm If} \;\; (b_i)_{1\le i\le l}\in \R_+^{l}, (c_{\alpha})_{1 \leq \alpha \leq m} \in \R_+^{ m}, (d_{\beta})_{1 \leq \beta \leq q} \in \R^q {\rm \;\;\;satisfy} \\
(\forall \alpha \in \{ 1,...,m\}, \; c_{\alpha} g^{\alpha}(x(T)) = 0), {\rm and} \\
\sum_{i=1}^{l} b_iD_{H}g_i^0(x(T))+\sum_{\alpha = 1}^m c_{\alpha} D_{H}g^{\alpha}(x(T)) + \sum_{\beta = 1}^q d_{\beta} D_{H}h^{\beta}(x(T)) = 0,\\ {\rm then}
\;\; (\forall i\in\{1,...,l\},\;  b_i=0), \;\;
(\forall \alpha \in \{ 1, ...,m\}, \;  c_{\alpha} = 0) \;\; {\rm and} \;\; \\
 (\forall \beta \in \{ 1, ..., q\}, \; d_{\beta} = 0).
\end{array}
\right.
\]
and
\[
(QC_1)
\left\{
\begin{array}{l}
{\rm If} \;\; (c_{\alpha})_{1 \leq \alpha \leq m} \in \R_+^{ m}, (d_{\beta})_{1 \leq \beta \leq q} \in \R^q {\rm \;\;\;satisfy} \\
(\forall \alpha \in \{ 1,...,m\}, \; c_{\alpha} g^{\alpha}(x(T)) = 0), {\rm and} \\
\sum_{\alpha = 1}^m c_{\alpha} D_{H}g^{\alpha}(x(T)) + \sum_{\beta = 1}^q d_{\beta} D_{H}h^{\beta}(x(T)) = 0, {\rm then}\\
(\forall \alpha \in \{ 1, ...,m\}, \;  c_{\alpha} = 0) \;\; {\rm and} \;\; (\forall \beta \in \{ 1, ..., q\}, \; d_{\beta} = 0).
\end{array}
\right.
\]
\begin{definition}
An admissible process $(\overline{x},\overline{u}) $ for $({\mathcal B})$ is a Pareto optimal solution for $({\mathcal B})$ when there does not exist an admissible process $(x,u)$ for $({\mathcal B})$ such that for all $i\in \{1,...,l\}$, $J_i(x,u) \ge J_i(\overline{x},\overline{u})$ and for some $i_0\in\{1,...,l\}$, $J_{i_0}(x,u) > J_{i_0}(\overline{x},\overline{u})$.
\end{definition} 
\begin{definition}
An admissible process $(\overline{x},\overline{u})$ for $({\mathcal B})$ is a weak Pareto optimal solution for $({\mathcal B})$ when there does not exist an admissible process $(x,u) $ for $({\mathcal B})$  such that for all $i\in \{1,...,l\}$, $J_i(x,u) > J_i(\overline{x},\overline{u})$.
\end{definition}   
Now, we formulate a list of conditions which will become the assumptions of our theorems. Let $(x_0,u_0)$ be an admissible process for $(\mathcal{B})$ or $(\mathcal{M})$.
\vskip1mm
\noindent
{\bf Conditions on the vector field.}
\begin{itemize}
%A3
\item[\bf{(A{\sc v}1)}] $f\in C^0([0,T] \times \Omega \times U,E)$, for all $(t, \xi, \zeta) \in [0,T] \times \Omega \times U$, $D_{G,2}f(t,\xi, \zeta)$ exists, for all $(t,\zeta) \in [0,T]\times U$, $D_{F,2}f(t,x_0(t), \zeta)$ exists and $[(t,\zeta)\mapsto D_{F,2}f(t,x_0(t),\zeta)]\in C^0([0,T]\times U,\mathcal{L}(E,E))$. 
% A8 et (A{\sc int}3)
\item[\bf{(A{\sc v}2)}] For all non-empty compact $K\subset \Omega$, for all non-empty compact $M\subset U$, $\sup_{(t,\xi,\zeta)\in [0,T]\times K\times M} \|D_{G,2}f(t,\xi,\zeta)\|_{\mathcal{L}} <+\infty$.
\end{itemize}
\vskip1mm
\noindent
{\bf Conditions on the integrands of the criterion.}
\begin{itemize}
%A3
\item[\bf{(A{\sc i}1)}] For all $i\in\{1,...,l\}$, $f_i^0\in C^0([0,T] \times \Omega \times U,\R)$, for all $(t, \xi, \zeta) \in [0,T] \times \Omega \times U$, $D_{G,2}f_i^0(t,\xi, \zeta)$ exists, for all $(t,\zeta) \in [0,T]\times U$, $D_{F,2}f_i^0(t,x_0(t), \zeta)$ exists and $[(t,\zeta)\mapsto D_{F,2}f_i^0(t,x_0(t),\zeta)]\in C^0([0,T]\times U,E^*)$. 
% A8 et (A{\sc int}3)
\item[\bf{(A{\sc i}2)}] For all $i\in\{1,...,l\}$, for all non-empty compact $K\subset \Omega$, for all non-empty compact $M\subset U$, $\sup_{(t,\xi,\zeta)\in [0,T]\times K\times M} \|D_{G,2}f_i^0(t,\xi,\zeta)\|_{\mathcal{L}} <+\infty$.
\end{itemize}
\vskip1mm
\noindent
{\bf Conditions on the functions defining  the terminal constraints and terminal parts of the criterion}
\begin{itemize}
% A1
\item[\bf{(A{\sc t}1)}] For all $i \in \{1,...,l\}$, $g_i^0$ is Hadamard differentiable at $x_0(T)$.
% A2
\item[\bf{(A{\sc t}2)}] For all $\alpha \in \{1,...,m\}$, $g^{\alpha}$ is Hadamard differentiable at $x_0(T)$.
% A2
\item[\bf{(A{\sc t}3)}] For all $\beta \in \{1,...,q \}$, $h^{\beta}$ is continuous on a neighborhood of $x_0(T)$ and Hadamard differentiable at $x_0(T)$.
\end{itemize}

%% theorem 2.1

\section{Necessary conditions of Pareto optimality}
\subsection{Necessary conditions of Pareto optimality for the Mayer problem}

\begin{definition}
The Hamiltonian of (${\mathcal M}$) is the function $H_M :[0,T]\times \Omega \times U\times E^* \rightarrow \R$ defined by, for all $(t,x,u,p)\in [0,T]\times \Omega \times U\times E^*$, $H_M(t,x,u,p) := p \cdot f(t,x,u)$.
\end{definition}
%% theorem 2.1
\begin{theorem}\label{th22} (Pontryagin Principle for the Mayer problem)\\
When $(x_0,u_0)$ is a Pareto optimal solution of $(\mathcal{M})$, under (A{\sc v}1), (A{\sc v}2), (A{\sc t}1), (A{\sc t}2) and (A{\sc t}3),  there exists $(\theta_i)_{1 \le i \le l}\in \R^l$, $(\lambda_{\alpha})_{1 \leq \alpha \leq m} \in \R^{m}$, $(\mu_{\beta})_{1 \leq \beta \leq q} \in \R^q$ and an adjoint function $p \in PC^1([0,T], E^*)$ which satisfy the following conditions.
\begin{itemize}
\item[(NN)] $((\theta_i)_{1 \le i\le l},(\lambda_{\alpha})_{1 \leq \alpha \leq m}$, $(\mu_{\beta})_{1 \leq \beta \leq q})\neq 0$
\item[(Si)] For all $i\in \{1,...,l\}$, $\theta_i\ge 0$ and for all $\alpha \in \{1,...,m \}$, $\lambda_{\alpha} \geq 0$.
\item[(S${\ell}$)] For all $\alpha \in \{1,...,m \}$, $\lambda_{\alpha} g^{\alpha}(x_0(T)) = 0$.
\item[(TC)] $\sum_{i=1}^{l} \theta_iD_Hg_i^0(x_0(T))+\sum_{\alpha = 1}^{m} \lambda_{\alpha} D_Hg^{\alpha}(x_0(T)) + \sum_{\beta = 1}^q \mu_{\beta} D_Hh^{\beta}(x_0(T)) = p(T)$.
\item[(AE.M)] $\underline{d}p(t) = - D_{F,2}H_M(t, x_0(t),u_0(t), p(t))$ for all $t \in [0,T]$.
\item[(MP.M)] For all $t \in [0,T]$, for all $\zeta \in U$, \\
$H_M(t, x_0(t), u_0(t), p(t)) \geq H_M(t, x_0(t), \zeta, p(t))$.
\item[(CH.M)] $\bar{H}_M := [ t \mapsto H_M(t, x_0(t), u_0(t), p(t))] \in C^0([0,T], \R)$.
\end{itemize} 
(NN) is a condition of non nullity, (Si) is a sign condition, (S${\ell}$) is a slackness condition, (TC) is the transversality condition, (AE.M) is the adjoint equation, (MP.M) is the maximum principle and (CH.M) is a condition of continuity on the Hamiltonian.
\end{theorem}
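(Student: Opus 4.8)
The plan is to reduce the Mayer problem to an optimization in the control alone, to use needle (spike) variations to reach the pointwise maximum principle, and to convert Pareto optimality into nonzero multipliers via a \emph{finite-dimensional} theorem of the alternative. Since $f_i^0=0$, the criteria are $J_i(x,u)=g_i^0(x(T))$, and the objectives and the terminal constraints depend on the control only through the terminal state $x(T)$, the state being determined by $u$ through the Cauchy problem $\underline{d}x=f(t,x,u)$, $x(0)=\xi_0$. I therefore work at the terminal point $x_0(T)$ and treat $(g_i^0,g^\alpha,h^\beta)$ as functions of the attainable terminal states.

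First I would set up the variational apparatus imported from \cite{BY}. Under (A{\sc v}1)--(A{\sc v}2), the map $t\mapsto D_{F,2}f(t,x_0(t),u_0(t))$ lies in $NPC^0([0,T],\mathcal{L}(E,E))$ (along $u_0$, which is piecewise continuous, using the continuity of $(t,\zeta)\mapsto D_{F,2}f(t,x_0(t),\zeta)$ from (A{\sc v}1)), so the linearized equation $\underline{d}y(t)=D_{F,2}f(t,x_0(t),u_0(t))\cdot y(t)$ admits a fundamental solution (resolvent) $R(t,s)\in\mathcal{L}(E,E)$. For a continuity point $s\in\,]0,T[\,$ of $u_0$ and $\zeta\in U$, the needle variation of $u_0$ equal to $\zeta$ on a shrinking interval ending at $s$ produces a state whose terminal value has one-sided derivative $R(T,s)\cdot[f(s,x_0(s),\zeta)-f(s,x_0(s),u_0(s))]$. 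Superposing finitely many disjoint needles, I collect these vectors and their positive combinations into the convex cone $K\subset E$ of first-order attainable terminal-state directions.

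Next I would encode Pareto optimality as a first-order inconsistency. Because $(x_0,u_0)$ is Pareto optimal, no admissible process can achieve $g_i^0(x(T))\ge g_i^0(x_0(T))$ for all $i$ with one strict while keeping $g^\alpha(x(T))\ge 0$ and $h^\beta(x(T))=0$; passing to first order along $v\in K$ and using the Hadamard differentiability (A{\sc t}1)--(A{\sc t}3), this forbids the existence of $v\in K$ with $D_Hg_i^0(x_0(T))\cdot v\ge 0$ for all $i$ and $>0$ for some $i$, with $D_Hg^\alpha(x_0(T))\cdot v\ge 0$ for every active $\alpha$ (those with $g^\alpha(x_0(T))=0$), and $D_Hh^\beta(x_0(T))\cdot v=0$. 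Crucially, since $l,m,q$ are finite, the evaluation $v\mapsto\big((D_Hg_i^0\cdot v)_i,(D_Hg^\alpha\cdot v)_\alpha,(D_Hh^\beta\cdot v)_\beta\big)$ maps $K$ into $\R^{l}\times\R^{m'}\times\R^{q}$, so the separation lives in finite dimensions and avoids infinite-dimensional closed-cone pathologies. A Motzkin--Tucker type theorem of the alternative then yields $(\theta_i)\in\R_+^{l}$, $(\lambda_\alpha)\in\R_+^{m}$ with $\lambda_\alpha=0$ for inactive constraints (this is (S$\ell$)) and $(\mu_\beta)\in\R^{q}$, not all zero (NN) and with the sign condition (Si), such that the functional $\Lambda:=\sum_i\theta_iD_Hg_i^0(x_0(T))+\sum_\alpha\lambda_\alpha D_Hg^\alpha(x_0(T))+\sum_\beta\mu_\beta D_Hh^\beta(x_0(T))$ satisfies $\Lambda\cdot v\le 0$ for all $v\in K$.

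I would then set $p(T):=\Lambda$, which is exactly (TC), and let $p$ solve the backward linear equation (AE.M), $\underline{d}p(t)=-D_{F,2}H_M(t,x_0(t),u_0(t),p(t))=-p(t)\circ D_{F,2}f(t,x_0(t),u_0(t))$, with terminal value $\Lambda$; the regularity $p\in PC^1([0,T],E^*)$ follows from the linear ODE theory of \cite{BY} since the coefficient is in $NPC^0$. The duality identity $p(T)\cdot R(T,s)=p(s)$ (valid because $s\mapsto p(s)\cdot R(s,\sigma)w$ is constant when $p$ solves the adjoint equation) turns $\Lambda\cdot v\le 0$, for $v=R(T,s)[f(s,x_0(s),\zeta)-f(s,x_0(s),u_0(s))]$, into $p(s)\cdot f(s,x_0(s),\zeta)\le p(s)\cdot f(s,x_0(s),u_0(s))$, i.e.\ (MP.M) at every continuity point $s$, extended to all $t\in[0,T]$ by right-continuity and the one-sided limits. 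Finally (CH.M) follows from the continuity of $f$ and $p$ together with the maximum condition. The main obstacle is the first-order step: justifying the one-sided expansion of the terminal state under needle variations when $f$ is only Hadamard/Ga\^ateaux differentiable off the optimal trajectory (which is precisely what (A{\sc v}1)--(A{\sc v}2) and the single-objective results of \cite{BY} secure), and rigorously deriving the inconsistency of the linearized improving system while correctly incorporating the equality constraints $h^\beta$ through the resolvent image of $K$.
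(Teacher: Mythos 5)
Your proposal takes a genuinely different route from the paper, and it contains a real gap at its central step. The paper's proof is a short reduction: it first shows (Lemma \ref{lem1}) that a Pareto optimal $(x_0,u_0)$ solves each single-objective Mayer problem $({\mathcal M}_i)$ obtained by keeping $g_i^0$ as the objective and demoting the other objectives to inequality constraints $g_k^0(x(T))\ge g_k^0(x_0(T))$; it then rewrites these as $\mathfrak{g}_k(x(T))\ge 0$ with $\mathfrak{g}_k:=g_k^0-g_k^0(x_0(T))$ and applies the already-established single-objective Pontryagin principle (Theorem 2.4 of \cite{BY1}) to $({\mathcal M}_1)$. Since the new constraints are active at $x_0(T)$, their multipliers $\theta_2,\dots,\theta_l$ join $\theta_1$ on equal footing, the slackness conditions for the $\mathfrak{g}_k$ are vacuous, and (NN), (Si), (S$\ell$), (TC), (AE.M), (MP.M), (CH.M) drop out directly. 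You instead rebuild the entire needle-variation apparatus (resolvent, attainable cone $K$, separation, adjoint duality) for the multiobjective problem from scratch. That is a legitimate strategy in principle --- it is essentially how \cite{BY} and \cite{BY1} prove the single-objective case --- but it means you are re-proving the hard technical core rather than exploiting it.

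The concrete gap is the step you yourself flag as ``the main obstacle'': the passage from Pareto optimality of $(x_0,u_0)$ to the inconsistency of the linearized improving system over $K$. In the presence of the terminal equality constraints $h^\beta(x(T))=0$, one cannot conclude this by ``passing to first order'': the existence of a direction $v\in K$ satisfying the linearized improving inequalities and $D_Hh^\beta(x_0(T))\cdot v=0$ must be shown to produce an \emph{actual} admissible process violating Pareto optimality, and this requires a correction argument (a Brouwer fixed-point or Lyusternik-type step exploiting the finite dimension of the image of $K$ under the evaluation map, together with uniform second-order control of multi-needle expansions under the weak Hadamard/G\^ateaux hypotheses (A{\sc v}1)--(A{\sc v}2)). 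This is precisely the content that \cite{BY1} supplies and that your sketch defers. As written, your argument establishes the multipliers only conditionally on that lemma; until it is proved (or imported by reducing to the single-objective theorem as the paper does), the proof is incomplete. The remaining steps of your plan --- defining $p$ by the backward adjoint equation with $p(T)=\Lambda$, using $p(T)\cdot R(T,s)=p(s)$ to convert the separation inequality into (MP.M) at continuity points, and recovering (CH.M) from the maximum condition at corners --- are standard and consistent with the machinery of \cite{BY}.
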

\begin{corollary}\label{cor21}
In this setting and under the assumptions of Theorem \ref{th22}, if moreover we assume that ($QC_1$) is fulfilled for $(x,u) = (x_0,u_0)$, then, for all $t \in [0,T]$, $((\theta_i)_{1\le i \le l}, p(t))$ is never equal to zero. 
\end{corollary}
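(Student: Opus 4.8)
The plan is to argue by contradiction using the non-nullity condition (NN) established in Theorem~\ref{th22}, combined with the extra constraint qualification $(QC_1)$. The corollary strengthens the global non-nullity statement (NN), which involves all the multipliers $(\theta_i)$, $(\lambda_\alpha)$, $(\mu_\beta)$, into a pointwise statement involving only the objective multipliers $(\theta_i)$ together with the adjoint value $p(t)$ at an arbitrary time $t\in[0,T]$.

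First I would fix $t\in[0,T]$ and suppose, for contradiction, that $(\theta_i)_{1\le i\le l}=0$ and $p(t)=0$. The key observation is that the adjoint equation (AE.M) is a homogeneous linear ODE in $p$: since $H_M(t,x,u,p)=p\cdot f(t,x,u)$, the right-hand side $-D_{F,2}H_M(t,x_0(t),u_0(t),p(t))$ is linear in $p(t)$. Concretely, $\underline{d}p(t)=-D_{F,2}f(t,x_0(t),u_0(t))^*\,p(t)$, so the map $t\mapsto p(t)$ solves a linear piecewise-continuous Cauchy problem. By uniqueness of solutions of such linear equations (which holds on each continuity subinterval and propagates across corners because $p\in PC^1$ is continuous), if $p$ vanishes at one point $t$ then $p\equiv 0$ on all of $[0,T]$; in particular $p(T)=0$. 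This is the step I expect to be the technical heart of the argument, and the main obstacle is justifying the global vanishing rigorously in the piecewise-$C^1$ Banach-space setting rather than for classical ODEs — but the continuity of $p$ at the corners together with the continuity assumed in (Av1) on $D_{F,2}f$ should make the standard Gronwall-type uniqueness argument go through.

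Next I would feed $p(T)=0$ and $\theta_i=0$ for all $i$ into the transversality condition (TC). With the objective terms dropping out, (TC) reduces to
\[
\sum_{\alpha=1}^{m}\lambda_\alpha D_Hg^\alpha(x_0(T))+\sum_{\beta=1}^{q}\mu_\beta D_Hh^\beta(x_0(T))=0.
\]
Together with the sign condition (Si), which gives $\lambda_\alpha\ge 0$, and the slackness condition (S$\ell$), which gives $\lambda_\alpha g^\alpha(x_0(T))=0$, the multipliers $(\lambda_\alpha)$ and $(\mu_\beta)$ satisfy exactly the hypotheses of $(QC_1)$ at $(x,u)=(x_0,u_0)$. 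Applying $(QC_1)$ forces $\lambda_\alpha=0$ for all $\alpha$ and $\mu_\beta=0$ for all $\beta$.

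Finally, collecting everything, I would conclude that $\theta_i=0$, $\lambda_\alpha=0$ and $\mu_\beta=0$ for all indices, so the full multiplier vector $((\theta_i),(\lambda_\alpha),(\mu_\beta))$ equals zero. This directly contradicts the non-nullity condition (NN) guaranteed by Theorem~\ref{th22}. Hence the assumption was false, and $((\theta_i)_{1\le i\le l},p(t))\neq 0$; since $t$ was arbitrary, this holds for every $t\in[0,T]$, which is the assertion of the corollary.
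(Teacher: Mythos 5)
Your proposal is correct and follows exactly the paper's own argument: contradiction via the homogeneity of the adjoint equation (AE.M) forcing $p\equiv 0$, hence $p(T)=0$, then (TC), (Si), (S$\ell$) and $(QC_1)$ killing the remaining multipliers, contradicting (NN). Your extra care about propagating uniqueness across the corners of the piecewise-$C^1$ adjoint is a welcome detail the paper leaves implicit, but it is not a different route.
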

\begin{corollary}\label{cor22}
In this setting and under the assumptions of Theorem \ref{th22}, if moreover we assume that ($QC_0$) is fulfilled for $(x,u) = (x_0,u_0)$, then, for all $t \in [0,T]$, $p(t)$ is never equal to zero. 
\end{corollary}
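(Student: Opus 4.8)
The plan is to argue by contradiction, reading off everything from Theorem~\ref{th22}. By that theorem there exist multipliers $((\theta_i)_{1\le i\le l},(\lambda_\alpha)_{1\le\alpha\le m},(\mu_\beta)_{1\le\beta\le q})$ and an adjoint function $p\in PC^1([0,T],E^*)$ satisfying (NN), (Si), (S$\ell$), (TC) and (AE.M). Suppose, contrary to the claim, that $p(t_0)=0$ for some $t_0\in[0,T]$. I will show that this forces every multiplier to vanish, which contradicts the nonnullity condition (NN), and hence conclude that $p(t)\ne 0$ for all $t$.

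First I would observe that the adjoint equation (AE.M) is a homogeneous linear equation in $p$. Since $H_M(t,x,u,p)=p\cdot f(t,x,u)$, the chain rule gives $D_{F,2}H_M(t,x_0(t),u_0(t),p(t))=p(t)\circ A(t)$, where $A(t):=D_{F,2}f(t,x_0(t),u_0(t))\in\mathcal{L}(E,E)$ (which exists by (A{\sc v}1)), so (AE.M) reads $\underline{d}p(t)=-\,p(t)\circ A(t)$. Moreover, under (A{\sc v}1) the map $(t,\zeta)\mapsto D_{F,2}f(t,x_0(t),\zeta)$ is continuous, and $u_0\in NPC^0([0,T],U)$, so on each interval of continuity of $u_0$ the coefficient $A$ extends continuously to the closed subinterval; as there are finitely many such subintervals, $C:=\sup_{t\in[0,T]}\|A(t)\|_\mathcal{L}$ is finite.

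Next I would use the fundamental relation $p(t)-p(t_0)=\int_{t_0}^{t}\underline{d}p(s)\,ds$ recalled in Section~2. Because $p(t_0)=0$, this gives $p(t)=-\int_{t_0}^{t}p(s)\circ A(s)\,ds$, whence $\|p(t)\|_{E^*}\le C\left|\int_{t_0}^{t}\|p(s)\|_{E^*}\,ds\right|$ for every $t\in[0,T]$. A Gronwall argument, applied forward on $[t_0,T]$ and backward on $[0,t_0]$ — legitimate precisely because $p$ is continuous on all of $[0,T]$ (as $PC^1([0,T],E^*)\subset C^0([0,T],E^*)$) and $A$ is bounded on all of $[0,T]$ — then yields $p\equiv 0$ on $[0,T]$; in particular $p(T)=0$. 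With $p(T)=0$, the transversality condition (TC) becomes $\sum_{i=1}^{l}\theta_i D_H g_i^0(x_0(T))+\sum_{\alpha=1}^{m}\lambda_\alpha D_H g^\alpha(x_0(T))+\sum_{\beta=1}^{q}\mu_\beta D_H h^\beta(x_0(T))=0$. By (Si) we have $\theta_i\ge 0$ and $\lambda_\alpha\ge 0$, and by (S$\ell$) we have $\lambda_\alpha g^\alpha(x_0(T))=0$; hence the triple $(b_i,c_\alpha,d_\beta)=(\theta_i,\lambda_\alpha,\mu_\beta)$ meets exactly the standing hypotheses of the constraint qualification $(QC_0)$ at $(x,u)=(x_0,u_0)$. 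Therefore $(QC_0)$ forces $\theta_i=0$, $\lambda_\alpha=0$ and $\mu_\beta=0$ for all indices, contradicting (NN). This contradiction proves that $p(t)\ne 0$ for every $t\in[0,T]$.

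The one genuinely delicate step is the propagation in the third paragraph: one must check that the piecewise-continuous character of $A$ — with corners inherited from those of $u_0$ — does not obstruct the Gronwall estimate, and that the estimate can be run in both time directions starting from an arbitrary $t_0$. Continuity of $p$ across the corners together with the uniform bound $C$ on $A$ are exactly what make the homogeneous linear equation admit only the zero solution through $t_0$; once $p\equiv 0$ is in hand, the remainder is a direct matching of the conclusions (TC), (Si), (S$\ell$) of Theorem~\ref{th22} against the definition of $(QC_0)$.
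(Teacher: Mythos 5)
Your proof is correct and follows the same route as the paper: assume $p(t_1)=0$ for some $t_1$, use the homogeneity of the adjoint equation (AE.M) and uniqueness of the associated Cauchy problem to conclude $p\equiv 0$, hence $p(T)=0$, and then combine (TC), (Si), (S$\ell$) with $(QC_0)$ to contradict (NN). The only difference is that you supply the Gronwall/boundedness details behind the uniqueness claim, which the paper invokes without elaboration.
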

As in \cite{BY1}, we introduce another condition
\begin{itemize}
\item[\bf{(A{\sc v}3)}] $U$ is a subset of a real normed vector space $Y$, there exists $\hat{t}\in [0,T]$ s.t. $U$ is a neighborhood of $u_0(\hat{t})$ in $Y$, $D_{G,3}f(\hat{t},x_0(\hat{t}),u_0(\hat{t}))$ exists and it is surjective.
\end{itemize}
\begin{corollary}\label{cor23}
In this setting and under the assumptions of Theorem \ref{th22}, if moreover we assume that ($QC_1$) is fulfilled for $(x,u) = (x_0,u_0)$ and (A{\sc v}3), then $(\theta_i)_{1\le i \le l}\neq 0$.
\end{corollary}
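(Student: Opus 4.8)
The plan is to localize the argument at the distinguished instant $\hat t\in[0,T]$ furnished by (A{\sc v}3) and to combine the maximum principle (MP.M) with Corollary \ref{cor21}. Since $(QC_1)$ is assumed, Corollary \ref{cor21} applies and yields that for every $t\in[0,T]$ the pair $((\theta_i)_{1\le i\le l},p(t))$ is nonzero; I will use this at $t=\hat t$. Thus it suffices to prove that $p(\hat t)=0$, for then the nonvanishing of $((\theta_i)_{1\le i\le l},p(\hat t))$ forces $(\theta_i)_{1\le i\le l}\neq 0$, which is exactly the claim.

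The heart of the argument is therefore to establish $p(\hat t)=0$. By (A{\sc v}3), $U$ is a neighborhood of $u_0(\hat t)$ in $Y$, so $u_0(\hat t)$ is an interior point of the admissible control set. The maximum principle (MP.M) at $t=\hat t$ states that $u_0(\hat t)$ maximizes the map $\zeta\mapsto H_M(\hat t,x_0(\hat t),\zeta,p(\hat t))=p(\hat t)\cdot f(\hat t,x_0(\hat t),\zeta)$ over $U$. This map is G\^ateaux differentiable at $u_0(\hat t)$, the differentiability coming from the existence of $D_{G,3}f(\hat t,x_0(\hat t),u_0(\hat t))$ postulated in (A{\sc v}3), and its G\^ateaux differential is, by the chain rule, $p(\hat t)\circ D_{G,3}f(\hat t,x_0(\hat t),u_0(\hat t))$. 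Because $u_0(\hat t)$ is an \emph{interior} local maximizer, this differential must vanish, i.e. $p(\hat t)\cdot\bigl(D_{G,3}f(\hat t,x_0(\hat t),u_0(\hat t))\cdot k\bigr)=0$ for every $k\in Y$. Finally, the surjectivity of $D_{G,3}f(\hat t,x_0(\hat t),u_0(\hat t))$ onto $E$ in (A{\sc v}3) means that every $e\in E$ is of the form $D_{G,3}f(\hat t,x_0(\hat t),u_0(\hat t))\cdot k$, whence $p(\hat t)\cdot e=0$ for all $e\in E$; that is, $p(\hat t)=0$.

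The step I expect to require the most care is the passage from ``interior local maximizer'' to ``vanishing G\^ateaux differential''. One must use that $U$ being a neighborhood of $u_0(\hat t)$ allows displacements in both directions $+k$ and $-k$ for small positive steps, so that the one-sided limits give $D_G(\cdot)\cdot k\le 0$ for $+k$ and for $-k$ simultaneously, yielding $D_G(\cdot)\cdot k=0$ rather than merely an inequality; the surjectivity hypothesis then completes the deduction mechanically. As an alternative to invoking Corollary \ref{cor21}, one can argue by contradiction directly: assuming $(\theta_i)_{1\le i\le l}=0$ and using the just-established $p(\hat t)=0$, the homogeneous linear adjoint equation (AE.M) together with uniqueness of solutions of linear differential equations forces $p\equiv 0$ on $[0,T]$, so $p(T)=0$; then the transversality condition (TC) reduces to a relation in the $\lambda_\alpha$ and $\mu_\beta$ alone, to which $(QC_1)$ applies by virtue of (Si) and (S$\ell$), giving all $\lambda_\alpha=0$ and all $\mu_\beta=0$ and contradicting the nonnullity condition (NN). The route through Corollary \ref{cor21} is, however, the shorter one.
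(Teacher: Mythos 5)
Your proof is correct and follows essentially the same route as the paper: use (MP.M) at the interior point $u_0(\hat t)$ to get that the G\^ateaux differential $p(\hat t)\circ D_{G,3}f(\hat t,x_0(\hat t),u_0(\hat t))$ vanishes, conclude $p(\hat t)=0$ from surjectivity, and then invoke Corollary \ref{cor21}. The only (immaterial) difference is that you argue directly while the paper phrases it as a contradiction starting from $(\theta_i)_{1\le i\le l}=0$ --- an assumption that, as your version makes clear, is never actually needed to obtain $p(\hat t)=0$ since the Mayer Hamiltonian does not involve the $\theta_i$.
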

We introduce a new condition of linear independence.
\begin{itemize}
% A10
\item[\bf{(A{\sc lib})}] $U$ is a subset of a real normed vector space $Y$ s.t. $U$ is a neighborhood of $u_0(T)$ in $Y$, $D_{G,3}f(T,x_0(T),u_0(T))$ exists and \\$((D_Hg^{\alpha}(x_0(T))\circ D_{G,3}f(T,x_0(T),u_0(T)))_{1\le \alpha\le m},\\(D_Hh^{\beta}(x_0(T))\circ D_{G,3}f(T,x_0(T),u_0(T)))_{1\le \beta \le q})$ are linearly independent.
\end{itemize}
\begin{corollary}\label{cor24}
In this setting and under the assumptions of Theorem \ref{th22}, if moreover we assume (A{\sc lib}) is fulfilled, then $(\theta_i)_{1\le i \le l}\neq 0$.
\end{corollary}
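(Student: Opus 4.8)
Suppose, for contradiction, that $\theta_i=0$ for every $i\in\{1,\dots,l\}$. The plan is to show that the maximum principle (MP.M) at the terminal time, read through the interior structure supplied by \textbf{(A\textsc{lib})}, yields a stationarity identity that, once combined with the transversality condition (TC) and the linear independence in \textbf{(A\textsc{lib})}, kills all the remaining multipliers and contradicts the non-nullity (NN). The first step is to extract that stationarity. Since $U$ is a neighborhood of $u_0(T)$ in $Y$, for each $h\in Y$ the point $u_0(T)+th$ belongs to $U$ for all small $t>0$, so (MP.M) at $t=T$ gives
\[
p(T)\cdot f(T,x_0(T),u_0(T)+th)\le p(T)\cdot f(T,x_0(T),u_0(T)).
\]
Dividing by $t>0$ and letting $t\to 0+$, using that $D_{G,3}f(T,x_0(T),u_0(T))$ exists and that $p(T)\in E^*$ is continuous and linear, I get $p(T)\cdot\bigl(D_{G,3}f(T,x_0(T),u_0(T))\cdot h\bigr)\le 0$ for every $h$; replacing $h$ by $-h$ yields the reverse inequality, hence $p(T)\circ D_{G,3}f(T,x_0(T),u_0(T))=0$ in $Y^*$.

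The second step feeds (TC) into this identity. With $\theta_i=0$ the condition (TC) reads $p(T)=\sum_{\alpha=1}^m\lambda_\alpha D_Hg^\alpha(x_0(T))+\sum_{\beta=1}^q\mu_\beta D_Hh^\beta(x_0(T))$. Composing both sides on the right with $D_{G,3}f(T,x_0(T),u_0(T))$ and invoking the previous step gives
\[
\sum_{\alpha=1}^m\lambda_\alpha\bigl(D_Hg^\alpha(x_0(T))\circ D_{G,3}f(T,x_0(T),u_0(T))\bigr)+\sum_{\beta=1}^q\mu_\beta\bigl(D_Hh^\beta(x_0(T))\circ D_{G,3}f(T,x_0(T),u_0(T))\bigr)=0.
\]
The linear independence asserted in \textbf{(A\textsc{lib})} then forces $\lambda_\alpha=0$ for all $\alpha$ and $\mu_\beta=0$ for all $\beta$. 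Together with $\theta_i=0$ for all $i$, this makes the full multiplier $((\theta_i),(\lambda_\alpha),(\mu_\beta))$ vanish, contradicting (NN). Therefore $(\theta_i)_{1\le i\le l}\neq 0$.

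The only delicate point is the limit in the first step: one must verify that the one-sided directional derivative of $\zeta\mapsto p(T)\cdot f(T,x_0(T),\zeta)$ at $u_0(T)$ is exactly $p(T)\circ D_{G,3}f(T,x_0(T),u_0(T))$, which follows by applying the continuous linear functional $p(T)$ to the difference quotients that converge in $E$ to $D_{G,3}f(T,x_0(T),u_0(T))\cdot h$. Everything else is bookkeeping; notably, no separate constraint qualification such as $(QC_1)$ is needed here, because \textbf{(A\textsc{lib})} simultaneously provides the interior structure of $U$ at $T$ (so the maximum principle becomes a stationarity condition) and the independence that annihilates the $\lambda_\alpha$ and $\mu_\beta$.
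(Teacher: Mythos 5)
Your proof is correct and follows essentially the same route as the paper's: contradiction via $(\theta_i)=0$, the interior first-order condition from (MP.M) at $t=T$ giving $p(T)\circ D_{G,3}f(T,x_0(T),u_0(T))=0$, then (TC) composed with $D_{G,3}f(T,x_0(T),u_0(T))$ and the linear independence in (A{\sc lib}) to annihilate the remaining multipliers against (NN). The only difference is that you spell out the one-sided difference-quotient argument that the paper leaves implicit.
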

For each $j\in \{1,...,l\}$, we consider the following condition:
\begin{itemize}
% A10
\item[\bf{(A{\sc f})$_j$}] $U$ is a subset of a real normed vector space $Y$ s.t. $U$ is a neighborhood of $u_0(T)$ in $Y$, $D_{G,3}f(T,x_0(T),u_0(T))$ exists and \\$((D_Hg_i^{0}(x_0(T))\circ D_{G,3}f(T,x_0(T),u_0(T)))_{i\neq j},\\(D_Hg^{\alpha}(x_0(T))\circ D_{G,3}f(T,x_0(T),u_0(T)))_{1\le \alpha\le m},\\(D_Hh^{\beta}(x_0(T))\circ D_{G,3}f(T,x_0(T),u_0(T)))_{1\le \beta \le q})$ are linearly independent.
\end{itemize}
\begin{corollary}\label{cor25}
In this setting and under the assumptions of Theorem \ref{th22}, if, for each $j\in\{1,...,l\}$, we have (A{\sc f})$_j$, then $\theta_j \neq 0$ i.e. we can take $\theta_j=1$.  
Moreover, $((\theta_i)_{1\le i\le l},(\lambda_{\alpha})_{1 \leq \alpha \leq m}, (\mu_{\beta})_{1 \leq \beta \leq q},p)\in \R^{l}\times \R^{m}\times\R^q\times PC^1([0,T], E^*)$ with $\theta_j=1$ that verify the conclusions of Theorem \ref{th22} are unique.   
\end{corollary}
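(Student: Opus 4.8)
The plan is to distill from the maximum principle a single first-order stationarity identity at the terminal time, feed it through the transversality condition, and then apply the linear-independence hypothesis (A{\sc f})$_j$ twice: once to exclude $\theta_j=0$, and once to pin down all the multipliers.

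First I would exploit (MP.M) at $t=T$. Since (A{\sc f})$_j$ guarantees that $U$ is a neighborhood of $u_0(T)$ in $Y$, the point $u_0(T)$ is interior, and the map $\zeta\mapsto H_M(T,x_0(T),\zeta,p(T))=p(T)\cdot f(T,x_0(T),\zeta)$ attains a local maximum there. As $D_{G,3}f(T,x_0(T),u_0(T))$ exists, this map is G\^ateaux differentiable at $u_0(T)$; comparing the one-sided directional derivatives in directions $h$ and $-h$ (and using linearity of the differential) forces it to vanish, i.e. $p(T)\circ D_{G,3}f(T,x_0(T),u_0(T))=0$ in $Y^*$. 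Next I would compose the transversality condition (TC) on the right with $A:=D_{G,3}f(T,x_0(T),u_0(T))$ and substitute this identity, obtaining
\[
\sum_{i=1}^{l}\theta_i\bigl(D_Hg_i^0(x_0(T))\circ A\bigr)+\sum_{\alpha=1}^{m}\lambda_\alpha\bigl(D_Hg^\alpha(x_0(T))\circ A\bigr)+\sum_{\beta=1}^{q}\mu_\beta\bigl(D_Hh^\beta(x_0(T))\circ A\bigr)=0 .
\]
If $\theta_j=0$, the $j$-th objective term disappears and what remains is a vanishing linear combination of exactly the family declared linearly independent in (A{\sc f})$_j$; hence every remaining coefficient is zero, so all $\theta_i$, $\lambda_\alpha$, $\mu_\beta$ vanish, contradicting (NN). Therefore $\theta_j\neq 0$, and by the sign condition (Si) in fact $\theta_j>0$. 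Dividing the whole tuple $((\theta_i)_{1\le i\le l},(\lambda_\alpha)_{1\le\alpha\le m},(\mu_\beta)_{1\le\beta\le q},p)$ by $\theta_j>0$ preserves every conclusion of Theorem \ref{th22}, each of which is positively homogeneous in this tuple and compatible with (Si), and normalizes $\theta_j=1$.

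For uniqueness I would take two tuples satisfying the conclusions of Theorem \ref{th22} with $\theta_j=\theta_j'=1$ and subtract. Both satisfy the displayed stationarity identity, so their difference $(\Delta\theta_i,\Delta\lambda_\alpha,\Delta\mu_\beta)$ produces a vanishing linear combination of the same composed gradients; since $\Delta\theta_j=0$, the $j$-th term drops out and (A{\sc f})$_j$ again forces $\Delta\theta_i=0$ for $i\neq j$, $\Delta\lambda_\alpha=0$, and $\Delta\mu_\beta=0$. Thus the multiplier vectors coincide, whence (TC) gives $p(T)=p'(T)$. Finally, the adjoint equation (AE.M) reads $\underline{d}p(t)=-p(t)\circ D_{F,2}f(t,x_0(t),u_0(t))$, a backward linear ODE whose coefficient is piecewise continuous by (A{\sc v}1); on each subinterval of continuity a Gronwall estimate yields uniqueness from the terminal datum, and continuity of $p\in PC^1([0,T],E^*)\subset C^0([0,T],E^*)$ propagates it across the corners, so $p=p'$ on $[0,T]$.

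The main obstacle is bookkeeping rather than conceptual: confirming that the first-order condition is legitimately extracted from a merely G\^ateaux-differentiable Hamiltonian at an interior maximizer, and justifying backward uniqueness for (AE.M) in the $PC^1$/$NPC^0$ framework where the right-hand side is only piecewise continuous. Both reduce to standard linear-ODE arguments once the single stationarity identity at $t=T$ is secured; the genuine leverage comes entirely from applying the linear independence in (A{\sc f})$_j$ to the same composed family twice — first against (NN), then against the difference of two candidate multiplier tuples.
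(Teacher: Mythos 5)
Your proposal is correct and follows essentially the same route as the paper: extract the stationarity identity $p(T)\circ D_{G,3}f(T,x_0(T),u_0(T))=0$ from (MP.M) at the interior point $u_0(T)$, compose (TC) with this differential, and invoke the linear independence in (A{\sc f})$_j$ twice — once against (NN) to get $\theta_j\neq 0$, once on the difference of two normalized tuples to get uniqueness of the multipliers, with $p=p'$ then following from (TC) and backward uniqueness for (AE.M). The only differences are that you spell out more explicitly the first-order condition at the interior maximizer and the Gronwall/corner argument for the adjoint equation, which the paper leaves implicit.
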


\subsection{Necessary conditions of Pareto optimality for the Bolza problem}

\begin{definition}
The Hamiltonian of (${\mathcal B}$) is the function $H_B :[0,T]\times \Omega \times U\times E^*\times\R^l \rightarrow \R$ defined by, for all $(t,x,u,p, \theta)\in [0,T]\times \Omega \times U\times E^*\times\R^l$, $H_B(t,x,u,p, \theta) := \sum_{i=1}^{l} \theta_i f_i^0(t,x,u) + p \cdot f(t,x,u)$.
\end{definition}

\begin{theorem}\label{th21} (Pontryagin Principle for the Bolza problem)\\
When $(x_0,u_0)$ is a Pareto optimal solution of $(\mathcal{B})$, under (A{\sc i}1), (A{\sc i}2), (A{\sc v}1), (A{\sc v}2), (A{\sc t}1), (A{\sc t}2) and (A{\sc t}3), there exists $(\theta_i)_{1 \le i \le l}\in \R^l$, $(\lambda_{\alpha})_{1 \leq \alpha \leq m} \in \R^{m}$, $(\mu_{\beta})_{1 \leq \beta \leq q} \in \R^q$ and an adjoint function $p \in PC^1([0,T], E^*)$ which satisfy the following conditions.
\begin{itemize}
\item[(NN)] $((\theta_i)_{1 \le i\le l},(\lambda_{\alpha})_{1 \leq \alpha \leq m}$, $(\mu_{\beta})_{1 \leq \beta \leq q})\neq 0$
\item[(Si)] For all $i\in \{1,...,l\}$, $\theta_i\ge 0$ and for all $\alpha \in \{1,...,m \}$, $\lambda_{\alpha} \geq 0$.
\item[(S${\ell}$)] For all $\alpha \in \{1,...,m \}$, $\lambda_{\alpha} g^{\alpha}(x_0(T)) = 0$.
\item[(TC)] $\sum_{i=1}^{l} \theta_iD_Hg_i^0(x_0(T))+\sum_{\alpha = 1}^{m} \lambda_{\alpha} D_Hg^{\alpha}(x_0(T)) + \sum_{\beta = 1}^q \mu_{\beta} D_Hh^{\beta}(x_0(T)) = p(T)$.
\item[(AE.B)] $\underline{d}p(t) = - D_{F,2}H_B(t, x_0(t),u_0(t), p(t), (\theta_i)_{1 \le i \le l})$ for all $t \in [0,T]$.
\item[(MP.B)] For all $t \in [0,T]$, for all $\zeta \in U$, \\
$H_B(t, x_0(t), u_0(t), p(t), (\theta_i)_{1 \le i \le l}) \geq H_B(t, x_0(t), \zeta, p(t), (\theta_i)_{1 \le i \le l})$.
\item[(CH.B)] $\bar{H}_B := [ t \mapsto H_B(t, x_0(t), u_0(t), p(t), (\theta_i)_{1 \le i \le l})] \in C^0([0,T], \R)$.
\end{itemize} 
\end{theorem}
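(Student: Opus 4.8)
The plan is to \emph{reduce the Bolza problem to the Mayer problem} by the standard state-augmentation trick, and then invoke Theorem~\ref{th22}, which has already been established. Concretely, I would introduce $l$ new scalar states $y_1,\dots,y_l$ governed by $\underline{d}y_i(t)=f_i^0(t,x(t),u(t))$ with initial condition $y_i(0)=0$, so that by the fundamental relation $x(t)-x(a)=\int_a^t\underline{d}x(s)\,ds$ recalled in Section~2 we have $y_i(T)=\int_0^T f_i^0(t,x(t),u(t))\,dt$. Working in the augmented Banach space $\tilde E:=E\times\R^l$ with augmented state $\tilde x:=(x,y)$, augmented vector field $\tilde f(t,\tilde x,u):=(f(t,x,u),(f_i^0(t,x,u))_{1\le i\le l})$, and augmented terminal payoffs $\tilde g_i^0(\tilde x(T)):=g_i^0(x(T))+y_i(T)$, the Bolza criterion $J_i(x,u)=g_i^0(x(T))+\int_0^T f_i^0\,dt$ becomes a pure terminal payoff $\tilde g_i^0(\tilde x(T))$. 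The terminal constraints $g^\alpha,h^\beta$ are unchanged, depending on $\tilde x(T)$ only through its $E$-component.

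First I would verify that $(x_0,u_0)$ Pareto optimal for $(\mathcal B)$ forces $(\tilde x_0,u_0)$, with $\tilde x_0=(x_0,y_0)$ and $y_{0,i}(t)=\int_0^t f_i^0(s,x_0(s),u_0(s))\,ds$, to be Pareto optimal for the associated augmented Mayer problem $(\tilde{\mathcal M})$: this is immediate because the two vectors of criteria coincide and the admissible processes correspond bijectively. Next I would check that the hypotheses of Theorem~\ref{th22} hold for $(\tilde{\mathcal M})$. The vector-field conditions (A{\sc v}1), (A{\sc v}2) for $\tilde f$ follow from (A{\sc v}1), (A{\sc v}2) for $f$ together with (A{\sc i}1), (A{\sc i}2) for the $f_i^0$, since the partial differentials of $\tilde f$ in the state variable split as a block of $D_{G,2}f$ and $D_{G,2}f_i^0$, and continuity/boundedness pass coordinatewise; the crucial point is that the regularity assumptions on $f_i^0$ were tailored precisely so that the augmented $\tilde f$ satisfies (A{\sc v}1), (A{\sc v}2). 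The terminal conditions (A{\sc t}1)--(A{\sc t}3) for $\tilde g_i^0,g^\alpha,h^\beta$ follow from (A{\sc t}1)--(A{\sc t}3) in the original data, using that $\tilde x\mapsto y_i$ is linear hence Hadamard differentiable, so $D_H\tilde g_i^0(\tilde x_0(T))\cdot(\delta x,\delta y)=D_Hg_i^0(x_0(T))\cdot\delta x+\delta y_i$.

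Then I would apply Theorem~\ref{th22} to $(\tilde{\mathcal M})$, obtaining multipliers $(\theta_i)$, $(\lambda_\alpha)$, $(\mu_\beta)$ and an adjoint $\tilde p\in PC^1([0,T],\tilde E^*)$. Writing $\tilde p=(p,r)$ with $p\in PC^1([0,T],E^*)$ and $r\in PC^1([0,T],\R^l)$, I would translate each conclusion back. The Mayer Hamiltonian is $\tilde H_M(t,\tilde x,u,\tilde p)=\tilde p\cdot\tilde f=p\cdot f+\sum_i r_i f_i^0$; the adjoint equation (AE.M) in the $y$-component reads $\underline{d}r_i(t)=-\partial_{y_i}\tilde H_M=0$ since $\tilde f$ does not depend on $y$, while the transversality condition (TC) in the $y$-component gives $r_i(T)=\theta_i\cdot 1$ (the contribution of $D_H\tilde g_i^0$ along $\delta y_i$). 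Hence $r_i\equiv\theta_i$ is constant, and substituting $r_i=\theta_i$ makes $\tilde H_M$ coincide exactly with $H_B(t,x,u,p,(\theta_i))$. The $E$-component of (AE.M) then becomes (AE.B), the maximum principle (MP.M) becomes (MP.B), the continuity (CH.M) becomes (CH.B), the $E$-component of (TC) becomes the stated (TC), and (NN), (Si), (S$\ell$) transfer verbatim. The main obstacle, and the one deserving the most care, is the bookkeeping that the augmented data genuinely satisfies (A{\sc v}1)--(A{\sc v}2): one must confirm that the Fr\'echet differential $D_{F,2}\tilde f(t,\tilde x_0(t),\zeta)$ exists and is jointly continuous in $(t,\zeta)$ with values in $\mathcal L(\tilde E,\tilde E)$, which is where the separate Fr\'echet-continuity clauses in (A{\sc i}1) for each $f_i^0$ are exactly what is needed, and to handle the product-space dual identification $\tilde E^*\cong E^*\times\R^l$ so that the splitting $\tilde p=(p,r)$ and the evaluation $\tilde p\cdot\tilde f$ are legitimate.
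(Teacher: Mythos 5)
Your proposal is correct and follows essentially the same route as the paper: state augmentation by the $l$ integral states, reduction to an augmented Mayer problem, verification of (A{\sc v}1)--(A{\sc v}2) and (A{\sc t}1)--(A{\sc t}3) for the augmented data, application of Theorem~\ref{th22}, and splitting of the augmented adjoint so that its $\R^l$-components are constant and equal to the $\theta_i$. The paper organizes exactly these steps into Lemmas~\ref{lem2}, \ref{lem3} and \ref{lem3bis} before translating the conclusions back, which matches your outline point for point.
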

%Corollary 1.2
\begin{corollary}\label{cor12} In this setting
 and under the assumptions of Theorem \ref{th21}, if moreover we assume that ($QC_1$) is fulfilled for $(x,u) = (x_0,u_0)$, then, for all $t \in [0,T]$, $((\theta_i)_{1\le i \le l}, p(t))$ is never equal to zero.  
\end{corollary}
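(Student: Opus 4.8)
The plan is to argue by contradiction. Suppose that $((\theta_i)_{1\le i\le l}, p(t_0)) = 0$ for some $t_0 \in [0,T]$; this means that $\theta_i = 0$ for every $i$ and that $p(t_0) = 0$. I will show that this forces all the remaining multipliers to vanish as well, contradicting the non-nullity condition (NN).

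First I would exploit the vanishing of the $\theta_i$ in the adjoint equation (AE.B). Since $H_B(t,x,u,p,\theta) = \sum_{i=1}^l \theta_i f_i^0(t,x,u) + p\cdot f(t,x,u)$, setting $\theta = 0$ gives $D_{F,2}H_B(t,x_0(t),u_0(t),p(t),0) = p(t)\circ D_{F,2}f(t,x_0(t),u_0(t))$. Writing $A(t) := D_{F,2}f(t,x_0(t),u_0(t)) \in \mathcal{L}(E,E)$, condition (AE.B) becomes the homogeneous linear equation $\underline{d}p(t) = -p(t)\circ A(t)$ on $[0,T]$. By (A{\sc v}1) the map $[(t,\zeta)\mapsto D_{F,2}f(t,x_0(t),\zeta)]$ is continuous, and $u_0 \in NPC^0([0,T],U)$ is piecewise continuous, so $t\mapsto A(t)$ belongs to $NPC^0([0,T],\mathcal{L}(E,E))$; in particular it is bounded on $[0,T]$.

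The key step is then a uniqueness argument for this equation. Integrating and using $p(t_0)=0$ yields $p(t) = -\int_{t_0}^{t} p(s)\circ A(s)\,ds$, whence $\|p(t)\| \le \left|\int_{t_0}^{t}\|A(s)\|_{\mathcal{L}}\,\|p(s)\|\,ds\right|$ for all $t$. Since $p$ is continuous on $[0,T]$ (as an element of $PC^1([0,T],E^*)\subset C^0([0,T],E^*)$) and $\sup_{[0,T]}\|A\|_{\mathcal{L}} < +\infty$, Gronwall's inequality applied on both sides of $t_0$ gives $p\equiv 0$ on $[0,T]$, and in particular $p(T) = 0$. I expect this to be the main technical obstacle: carrying the Gronwall/uniqueness argument cleanly across the corners of $p$, where only one-sided derivatives exist. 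The continuity of $p$ together with the piecewise continuity and boundedness of $A$ is precisely what makes the integral form valid on all of $[0,T]$ and lets the conclusion propagate through the corners.

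Finally I would feed $\theta = 0$ and $p(T) = 0$ into the transversality condition (TC), which collapses to $\sum_{\alpha=1}^m \lambda_\alpha D_Hg^\alpha(x_0(T)) + \sum_{\beta=1}^q \mu_\beta D_Hh^\beta(x_0(T)) = 0$. By (Si) we have $(\lambda_\alpha)_{1\le\alpha\le m}\in\R_+^m$, by (S$\ell$) we have $\lambda_\alpha g^\alpha(x_0(T)) = 0$ for each $\alpha$, and $(\mu_\beta)_{1\le\beta\le q}\in\R^q$; these are exactly the hypotheses of the qualification condition $(QC_1)$ at $(x_0,u_0)$, whose conclusion forces $\lambda_\alpha = 0$ for all $\alpha$ and $\mu_\beta = 0$ for all $\beta$. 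Combined with $\theta = 0$, this makes the whole tuple $((\theta_i)_{1\le i\le l},(\lambda_\alpha)_{1\le\alpha\le m},(\mu_\beta)_{1\le\beta\le q})$ vanish, contradicting (NN). Hence no such $t_0$ exists, and $((\theta_i)_{1\le i\le l},p(t))\neq 0$ for every $t\in[0,T]$.
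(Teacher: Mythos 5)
Your proposal is correct and follows essentially the same route as the paper: contradiction, observing that with $\theta=0$ the adjoint equation (AE.B) becomes homogeneous linear so that $p(t_0)=0$ forces $p\equiv 0$ and in particular $p(T)=0$, then feeding (TC), (Si), (S$\ell$) into $(QC_1)$ to kill the remaining multipliers and contradict (NN). The only difference is that you spell out the Gronwall argument behind the uniqueness of the Cauchy problem, which the paper simply invokes.
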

%%% Corollary 1.4
\begin{corollary}\label{cor13}
In this setting and under the assumptions of Theorem \ref{th21}, if moreover we assume that ($QC_1$) is fulfilled for $(x,u) = (x_0,u_0)$ and (A{\sc v}3), then $(\theta_i)_{1\le i \le l}\neq 0$.
\end{corollary}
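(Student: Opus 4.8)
The plan is to deduce Corollary \ref{cor13} from Corollary \ref{cor12} by showing that, once $(QC_1)$ and $(\text{A}{\sc v}3)$ both hold, the possibility $(\theta_i)_{1\le i\le l}=0$ forces a contradiction. Corollary \ref{cor12} already guarantees that $((\theta_i)_{1\le i\le l},p(t))\neq 0$ for every $t\in[0,T]$. Hence it suffices to argue that if all the $\theta_i$ vanish, then $p(t)$ must also vanish for at least one (in fact every) $t$, contradicting Corollary \ref{cor12}. So I would argue by contradiction: suppose $(\theta_i)_{1\le i\le l}=0$.

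First I would specialize the conclusions of Theorem \ref{th21} at the instant $\hat t\in[0,T]$ furnished by $(\text{A}{\sc v}3)$. Under the assumption $(\theta_i)_{1\le i\le l}=0$, the Hamiltonian reduces to $H_B(t,x,u,p,0)=p\cdot f(t,x,u)$, which is exactly the Mayer Hamiltonian $H_M$. The maximum principle (MP.B) then says that $\zeta\mapsto p(\hat t)\cdot f(\hat t,x_0(\hat t),\zeta)$ attains a maximum over $U$ at $\zeta=u_0(\hat t)$. Since $(\text{A}{\sc v}3)$ asserts that $U$ is a neighborhood of $u_0(\hat t)$ in the ambient normed space $Y$ and that the partial G\^ateaux differential $D_{G,3}f(\hat t,x_0(\hat t),u_0(\hat t))$ exists, $u_0(\hat t)$ is an interior maximizer, so the first-order condition gives
\[
p(\hat t)\cdot \bigl(D_{G,3}f(\hat t,x_0(\hat t),u_0(\hat t))\cdot v\bigr)=0
\qquad\text{for all }v\in Y.
\]
This is the crux of the argument and the step I expect to require the most care: I must verify that the composite map $\zeta\mapsto p(\hat t)\cdot f(\hat t,x_0(\hat t),\zeta)$ is G\^ateaux differentiable at $u_0(\hat t)$ with differential $p(\hat t)\circ D_{G,3}f(\hat t,x_0(\hat t),u_0(\hat t))$, and that interiority legitimately yields the vanishing of this differential (a two-sided directional-derivative argument, using that $U$ is a neighborhood so that $u_0(\hat t)\pm t v\in U$ for small $t>0$).

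Next I would invoke the surjectivity clause of $(\text{A}{\sc v}3)$: since $D_{G,3}f(\hat t,x_0(\hat t),u_0(\hat t))$ maps $Y$ onto $E$, the displayed identity says that $p(\hat t)$ annihilates all of $E$, whence $p(\hat t)=0$ in $E^*$. Combined with the standing hypothesis $(\theta_i)_{1\le i\le l}=0$, this yields $((\theta_i)_{1\le i\le l},p(\hat t))=0$, which directly contradicts Corollary \ref{cor12} applied at $t=\hat t$. Therefore the assumption $(\theta_i)_{1\le i\le l}=0$ is untenable, and we conclude $(\theta_i)_{1\le i\le l}\neq 0$, as claimed. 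The overall structure mirrors the Mayer-problem Corollary \ref{cor23}, the only genuine difference being that the relevant Hamiltonian is $H_B$ rather than $H_M$; but because $H_B$ collapses to $H_M$ precisely when $(\theta_i)_{1\le i\le l}=0$, the same interior-maximum plus surjectivity mechanism applies verbatim, and no new analytic input beyond (MP.B), $(\text{A}{\sc v}3)$ and Corollary \ref{cor12} is needed.
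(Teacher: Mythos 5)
Your proposal is correct and follows essentially the same route as the paper: assume $(\theta_i)_{1\le i\le l}=0$, observe that $H_B(\cdot,\cdot,\cdot,\cdot,0)$ reduces to $p\cdot f$ so that (MP.B) together with the interiority of $u_0(\hat t)$ in $U$ and the existence of $D_{G,3}f(\hat t,x_0(\hat t),u_0(\hat t))$ forces $p(\hat t)\circ D_{G,3}f(\hat t,x_0(\hat t),u_0(\hat t))=0$, then use surjectivity to get $p(\hat t)=0$ and contradict Corollary \ref{cor12}. Your explicit justification of the first-order condition at the interior maximizer is merely a more detailed rendering of the step the paper states tersely.
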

\begin{corollary}\label{cor14}
In the setting and under the assumptions of Theorem \ref{th21}, if moreover we assume (A{\sc lib}) is fulfilled, then $(\theta_i)_{1\le i \le l}\neq 0$.
\end{corollary}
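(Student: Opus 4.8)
The plan is to argue by contradiction. Suppose $(\theta_i)_{1\le i\le l}=0$. Then the non-nullity condition (NN) forces the remaining multipliers $((\lambda_{\alpha})_{1\le\alpha\le m},(\mu_{\beta})_{1\le\beta\le q})$ to be nonzero. The strategy is to show that (A{\sc lib}), together with the maximum principle (MP.B) and the transversality condition (TC) evaluated at the terminal time, forces all the $\lambda_{\alpha}$ and $\mu_{\beta}$ to vanish as well, contradicting this.

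First I would extract a stationarity condition at $t=T$ from (MP.B). Since $(\theta_i)=0$, the Hamiltonian collapses to $H_B(T,x_0(T),\zeta,p(T),0)=p(T)\cdot f(T,x_0(T),\zeta)$, so (MP.B) says exactly that the map $\zeta\mapsto p(T)\cdot f(T,x_0(T),\zeta)$ attains a maximum over $U$ at $\zeta=u_0(T)$. By (A{\sc lib}), $U$ is a neighborhood of $u_0(T)$ in $Y$ and $D_{G,3}f(T,x_0(T),u_0(T))$ exists, so $u_0(T)$ is an \emph{interior} maximizer, and by the chain rule the Gâteaux differential of this map at $u_0(T)$ is $p(T)\circ D_{G,3}f(T,x_0(T),u_0(T))$. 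Testing the one-sided difference quotients in directions $h$ and $-h$ and using the linearity of the Gâteaux differential, the interior maximum upgrades the one-sided inequalities into an equality, yielding
\[
p(T)\circ D_{G,3}f(T,x_0(T),u_0(T))=0 \quad \text{in } Y^*.
\]

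Next I would feed in (TC). With $(\theta_i)=0$ it reads $p(T)=\sum_{\alpha=1}^{m}\lambda_{\alpha}D_Hg^{\alpha}(x_0(T))+\sum_{\beta=1}^{q}\mu_{\beta}D_Hh^{\beta}(x_0(T))$. Composing on the right with $D_{G,3}f(T,x_0(T),u_0(T))$ and invoking the previous display gives
\[
\sum_{\alpha=1}^{m}\lambda_{\alpha}\bigl(D_Hg^{\alpha}(x_0(T))\circ D_{G,3}f(T,x_0(T),u_0(T))\bigr)+\sum_{\beta=1}^{q}\mu_{\beta}\bigl(D_Hh^{\beta}(x_0(T))\circ D_{G,3}f(T,x_0(T),u_0(T))\bigr)=0.
\]
The linear independence assumed in (A{\sc lib}) then forces every $\lambda_{\alpha}$ and every $\mu_{\beta}$ to be zero, contradicting (NN). Hence $(\theta_i)_{1\le i\le l}\neq 0$.

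The step I expect to be the crux is passing from (MP.B) to the stationarity identity $p(T)\circ D_{G,3}f(T,x_0(T),u_0(T))=0$. Care is needed because the paper's Gâteaux differential is defined through a \emph{one-sided} limit: the maximality only gives $p(T)\circ D_{G,3}f(T,x_0(T),u_0(T))\cdot h\le 0$ directly, and it is the evaluation in the opposite direction $-h$ combined with the linearity of $D_{G,3}f$ that promotes this to an equality for every $h\in Y$. The remaining manipulations (the chain rule for the continuous linear functional $p(T)$ and the linear-algebra conclusion from (A{\sc lib})) are routine. This argument is the Bolza counterpart of the one used for Corollary \ref{cor24} in the Mayer setting, the only difference being the vanishing of the $\theta$-weighted integrand terms in $H_B$.
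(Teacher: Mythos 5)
Your proposal is correct and follows essentially the same route as the paper: assume $(\theta_i)_{1\le i\le l}=0$, use (MP.B) at $t=T$ with the interiority of $u_0(T)$ from (A{\sc lib}) to get $p(T)\circ D_{G,3}f(T,x_0(T),u_0(T))=0$, compose (TC) with $D_{G,3}f(T,x_0(T),u_0(T))$, and invoke the linear independence in (A{\sc lib}) to kill all $\lambda_\alpha$ and $\mu_\beta$, contradicting (NN). Your extra care about upgrading the one-sided Gâteaux inequality to an equality via the directions $h$ and $-h$ is a welcome precision that the paper leaves implicit.
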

For each $j\in \{1,...,l\}$, we consider the following condition:
\begin{itemize}
\item[\bf{(A{\sc f})$^0_j$}] $U$ is a subset of a real normed vector space $Y$ s.t. $U$ is a neighborhood of $u_0(T)$ in $Y$, $D_{G,3}f(T,x_0(T),u_0(T))$ exists, $\forall i\in \{1,...,l\},\, i\neq j$ $D_{G,3}f_i^0(T,x_0(T),u_0(T))$ exists and \\$((D_Hg_i^{0}(x_0(T))\circ D_{G,3}f(T,x_0(T),u_0(T))+D_{G,3}f_i^0(T,x_0(T),u_0(T)))_{i\neq j},\\(D_Hg^{\alpha}(x_0(T))\circ D_{G,3}f(T,x_0(T),u_0(T)))_{1\le \alpha\le m},\\(D_Hh^{\beta}(x_0(T))\circ D_{G,3}f(T,x_0(T),u_0(T)))_{1\le \beta \le q})$ are linearly independent.
\end{itemize}
\begin{corollary}\label{cor15} In this setting
 and under the assumptions of Theorem \ref{th21}, if, for each $j\in\{1,...,l\}$, we have (A{\sc f})$^0_j$, then $\theta_j \neq 0$ (i.e. we can choose $\theta_j=1$).\\     
Moreover, if $D_{G,3}f_j^0(T,x_0(T),u_0(T))$ exists, then we have:\\ 
$((\theta_i)_{1\le i\le l},(\lambda_{\alpha})_{1 \leq \alpha \leq m}, (\mu_{\beta})_{1 \leq \beta \leq q},p)\in \R^{l}\times \R^{m}\times\R^q\times PC^1([0,T], E^*)$ with $\theta_j=1$ that verify the conclusions of Theorem \ref{th21} are unique.  
\end{corollary}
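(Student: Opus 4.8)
The plan is to exploit the pointwise maximum principle (MP.B) at the terminal time $t=T$ together with the transversality condition (TC) and the non-nullity condition (NN), following the scheme of Corollary \ref{cor25} for the Mayer problem. First I would abbreviate $A := D_{G,3}f(T,x_0(T),u_0(T)) \in \mathcal{L}(Y,E)$. Since $(A{\sc f})^0_j$ guarantees that $U$ is a neighborhood of $u_0(T)$ in $Y$, the point $u_0(T)$ is an interior maximizer of the map $\zeta \mapsto H_B(T,x_0(T),\zeta,p(T),(\theta_i))$ on $U$ by (MP.B). Hence, provided this map is G\^ateaux differentiable at $u_0(T)$, the first-order stationarity condition
\[
\sum_{i=1}^l \theta_i D_{G,3}f_i^0(T,x_0(T),u_0(T)) + p(T)\circ A = 0
\]
must hold.

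For the first assertion ($\theta_j \neq 0$) I would argue by contradiction, assuming $\theta_j = 0$. Then the (possibly non-differentiable) term $\theta_j f_j^0$ is the zero function and drops out of $H_B$, so the map above is G\^ateaux differentiable in the third variable by virtue of the differentiability of $f$ and of $f_i^0$ for $i\neq j$ assumed in $(A{\sc f})^0_j$, and the stationarity relation holds with the sum running only over $i\neq j$. I would then substitute (TC) (with $\theta_j=0$) composed on the right with $A$, obtaining
\[
\sum_{i\neq j} \theta_i\big(D_Hg_i^0(x_0(T))\circ A + D_{G,3}f_i^0(T,x_0(T),u_0(T))\big) + \sum_{\alpha=1}^m \lambda_\alpha \big(D_Hg^\alpha(x_0(T))\circ A\big) + \sum_{\beta=1}^q \mu_\beta \big(D_Hh^\beta(x_0(T))\circ A\big) = 0.
\]
The linear independence in $(A{\sc f})^0_j$ then forces $\theta_i=0$ ($i\neq j$), $\lambda_\alpha=0$ and $\mu_\beta=0$; together with $\theta_j=0$ this contradicts (NN). Hence $\theta_j\neq 0$, and since $\theta_j\ge 0$ by (Si) we have $\theta_j>0$; as every conclusion of Theorem \ref{th21} is positively homogeneous in $((\theta_i),(\lambda_\alpha),(\mu_\beta),p)$ (because $H_B$ is linear in $(p,\theta)$), dividing through by $\theta_j$ normalizes $\theta_j=1$.

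For uniqueness I would additionally use the existence of $D_{G,3}f_j^0(T,x_0(T),u_0(T))$, which now makes the full Hamiltonian differentiable in the third variable, so the stationarity relation holds with the complete sum over all $i$. Taking two tuples satisfying the conclusions of Theorem \ref{th21} with $\theta_j=\theta_j'=1$ and subtracting, the differences $\delta\theta_i:=\theta_i-\theta_i'$ (with $\delta\theta_j=0$), $\delta\lambda_\alpha$, $\delta\mu_\beta$ satisfy exactly the linear relation displayed above; linear independence in $(A{\sc f})^0_j$ yields $\delta\theta_i=0$, $\delta\lambda_\alpha=0$, $\delta\mu_\beta=0$, so the finite-dimensional multipliers coincide. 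Transversality (TC) then gives $p(T)=p'(T)$, and since $w:=p-p'$ solves the homogeneous linear adjoint equation $\underline{d}w(t) = -\,w(t)\circ D_{F,2}f(t,x_0(t),u_0(t))$ with $w(T)=0$, uniqueness for this linear ODE in $E^*$ (a Gronwall argument, using the continuity of $t\mapsto D_{F,2}f(t,x_0(t),u_0(t))$ from $(A{\sc v}1)$) gives $w\equiv 0$, i.e. $p=p'$.

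The step I expect to require the most care is the passage from (MP.B) to the pointwise stationarity identity at $t=T$: one must check that $u_0(T)$ is genuinely an interior maximizer (using that $u_0$ is right continuous with $u_0(T-)=u_0(T)$ and that $U$ is a neighborhood of $u_0(T)$), and, crucially in the first assertion, that dropping the $\theta_j f_j^0$ term is precisely what restores G\^ateaux differentiability of the Hamiltonian in the control. This is exactly the point where the asymmetry of the hypotheses enters, explaining why $D_{G,3}f_j^0$ need not exist for the non-triviality claim but is required for uniqueness.
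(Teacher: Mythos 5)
Your proposal is correct and follows essentially the same route as the paper: stationarity of $\zeta\mapsto H_B(T,x_0(T),\zeta,p(T),(\theta_i))$ at the interior point $u_0(T)$ via (MP.B), substitution of (TC), the linear independence in (A{\sc f})$^0_j$ to force all multipliers to vanish against (NN), normalization by the cone property, and for uniqueness the same linear relation followed by $p^1(T)=p^2(T)$ and backward uniqueness for the adjoint equation. Your closing remark correctly identifies the role of the missing $D_{G,3}f_j^0$ term, which is exactly how the paper's proof is organized.
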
 

\section{Sufficient conditions of Pareto optimality}
%%%%%%%%%%%%
Let $(\overline{x},\overline{u})\in PC^1([0,T],\Omega)\times NPC^0([0,T],U)$, we consider the following conditions. 
\begin{itemize}
\item[\bf{(S{\sc t}1)}] For all $i\in\{1,...,l\}$ $g^0_i$ is concave at $\overline{x}(T)$ and Hadamard differentiable at $\overline{x}(T)$.
\item[\bf{(S{\sc t}1-bis)}] For all $i\in\{1,...,l\}$ $g^0_i$ is pseudo-concave at $\overline{x}(T)$ and Hadamard differentiable at $\overline{x}(T)$.
\item[\bf{(S{\sc t}2)}] For all $\alpha \in \{1,...,m\}$, $g^\alpha$ is quasi-concave at $\overline{x}(T)$ and Hadamard differentiable at $\overline{x}(T)$.
\item[\bf{(S{\sc t}3)}] For all $\beta \in \{1,...,q\}$, $h^\beta$ and $-h^\beta$ are quasi-concave at $\overline{x}(T)$ and Hadamard differentiable at $\overline{x}(T)$.
\item[\bf{(S{\sc i}1)}] For all $i\in\{1,...,l\}$, $f_i^0\in C^0([0,T] \times \Omega \times U,\R)$.
\item[\bf{(S{\sc i}2)}] For all $t\in[0,T]$, for all $i\in\{1,...,l\}$, $D_{F,2}f_i^0(t,\overline{x}(t), \overline{u}(t))$ exists and $[t\mapsto D_{F,2}f_i^0(t,\overline{x}(t),\overline{u}(t))]\in NPC^0([0,T], E^*)$. 
\item[\bf{(S{\sc v}1)}] $f\in C^0([0,T] \times \Omega \times U,E)$.
\item[\bf{(S{\sc v}2)}] For all $t\in[0,T]$ $D_{F,2}f(t,\overline{x}(t), \overline{u}(t))$ exists and $[t\mapsto D_{F,2}f(t,\overline{x}(t),\overline{u}(t))]\in NPC^0([0,T], \mathcal{L}(E,E))$. 
% A8 et (A{\sc int}3)
\end{itemize}
\begin{theorem}\label{thsm1}
When $(\overline{x},\overline{u})\in Adm({\mathcal M})$, under (S{\sc t}1-bis), (S{\sc t}2), (S{\sc t}3), (S{\sc v}1) if there exists $((\theta_i)_{1\le i \le l}, (\lambda_\alpha)_{1\le \alpha \le m}, (\mu_\beta)_{1\le \beta \le q},p)\in \R^{l+m+q}\times PC^1([0,T],E^*)$ verifying the conclusions (NN), (Si), (S${\ell}$) and (TC) of Theorem \ref{th22} with $(x_0,u_0)=(\overline{x},\overline{u})$ and if the following condition is satisfied
\begin{itemize}
\item[\bf{(S{\sc hm}1)}] For each $(x,u)\in Adm({\mathcal M})$, for all $t\in[0,T]$ almost everywhere for the canonical measure of Borel on $[0,T]$,
$$H_M(t,\overline{x}(t), \overline{u}(t), p(t))-H_M(t,x(t),u(t),p(t)) \ge \underline{d}p(t)\cdot (x(t)-\overline{x}(t)),$$  
\end{itemize}
then we have:\\ 
if $(\theta_i)_{1\le i\le l} \neq 0$, then $(\overline{x},\overline{u})$ is a weak Pareto optimal solution of $(\mathcal{M})$,\\
if for all $i\in\{1,...,l\}$, $\theta_i \neq 0$, then $(\overline{x},\overline{u})$ is a Pareto optimal solution of $(\mathcal{M})$.
\end{theorem}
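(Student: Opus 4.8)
The plan is to argue by contradiction in each of the two cases. Since we are in the Mayer problem, $J_i(x,u)=g_i^0(x(T))$, so a failure of (weak) Pareto optimality is a terminal statement comparing $g_i^0(x(T))$ with $g_i^0(\overline{x}(T))$ for some competing admissible process $(x,u)$. The entire argument is engineered to convert the dynamic inequality (SHM1) into the single scalar inequality $p(T)\cdot(x(T)-\overline{x}(T))\le 0$ at the terminal time, and then to extract a contradiction from it through the transversality condition (TC) together with the pointwise concavity-type hypotheses on the terminal data.

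First I would fix such a competing process and set $w:=x-\overline{x}\in PC^1([0,T],E)$. Both processes solve the state equation, so $\underline{d}\overline{x}(t)=f(t,\overline{x}(t),\overline{u}(t))$ and $\underline{d}x(t)=f(t,x(t),u(t))$; since $H_M(t,\cdot,\cdot,p)=p\cdot f$, the left-hand side of (SHM1) equals $p(t)\cdot(\underline{d}\overline{x}(t)-\underline{d}x(t))=-p(t)\cdot\underline{d}w(t)$, and (SHM1) rewrites, for a.e. $t$, as $\underline{d}p(t)\cdot w(t)+p(t)\cdot\underline{d}w(t)\le 0$. The structural point is that the left-hand side is precisely the extended derivative $\underline{d}[p\cdot w](t)$ of the scalar $PC^1$ function $t\mapsto p(t)\cdot w(t)$: this is a Leibniz rule for $\underline{d}$ applied through the continuous bilinear duality pairing $E^*\times E\to\R$, valid at the points of differentiability and extended to the corners, in the spirit of the $PC^1/NPC^0$ calculus recalled from \cite{BY}. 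Integrating the a.e. inequality $\underline{d}[p\cdot w]\le 0$ over $[0,T]$ via the fundamental relation $\phi(T)-\phi(0)=\int_0^T\underline{d}\phi$, and using $w(0)=x(0)-\overline{x}(0)=\xi_0-\xi_0=0$, yields $p(T)\cdot(x(T)-\overline{x}(T))\le 0$.

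Next I would substitute (TC) into this inequality and sign the constraint groups. For an inequality constraint, if $\lambda_\alpha=0$ the term drops; if $\lambda_\alpha>0$ then the slackness condition (S$\ell$) gives $g^\alpha(\overline{x}(T))=0$, so admissibility yields $g^\alpha(x(T))\ge 0=g^\alpha(\overline{x}(T))$, and the differentiable form of quasi-concavity (St2) (with $D_G=D_H$ by Hadamard differentiability) gives $D_Hg^\alpha(\overline{x}(T))\cdot(x(T)-\overline{x}(T))\ge 0$; with the sign condition (Si) each such term is $\ge 0$. For an equality constraint, admissibility forces $h^\beta(x(T))=0=h^\beta(\overline{x}(T))$, and applying the quasi-concavity of both $h^\beta$ and $-h^\beta$ from (St3) gives $D_Hh^\beta(\overline{x}(T))\cdot(x(T)-\overline{x}(T))=0$, so those terms vanish irrespective of the sign of $\mu_\beta$. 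Hence the terminal inequality collapses to $\sum_{i=1}^l\theta_i\,D_Hg_i^0(\overline{x}(T))\cdot(x(T)-\overline{x}(T))\le 0$.

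Finally I would close each case using the pseudo-concavity (St1-bis) of the objectives. In the weak-Pareto case a violation gives $g_i^0(x(T))>g_i^0(\overline{x}(T))$ for every $i$; by pseudo-concavity, $D_Hg_i^0(\overline{x}(T))\cdot(x(T)-\overline{x}(T))\le 0$ would imply $g_i^0(x(T))\le g_i^0(\overline{x}(T))$, so the strict improvement forces $D_Hg_i^0(\overline{x}(T))\cdot(x(T)-\overline{x}(T))>0$ for every $i$, and since some $\theta_{i_0}>0$ by (NN) and (Si), the sum above is strictly positive, contradicting $\le 0$. The Pareto case is where I expect the genuine difficulty: a violation now gives $g_i^0(x(T))\ge g_i^0(\overline{x}(T))$ for all $i$ with a single strict index $i_0$ (and all $\theta_i>0$). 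The same argument still produces a strictly positive term at $i_0$, but at the indices where the objective is only weakly improved, $g_i^0(x(T))=g_i^0(\overline{x}(T))$, pseudo-concavity does not sign $D_Hg_i^0(\overline{x}(T))\cdot(x(T)-\overline{x}(T))$, so these terms could a priori be negative and cancel the gain. Controlling exactly these equality indices is the crux: the decisive tool is the gradient inequality $g_i^0(x(T))-g_i^0(\overline{x}(T))\le D_Hg_i^0(\overline{x}(T))\cdot(x(T)-\overline{x}(T))$, which forces $D_Hg_i^0(\overline{x}(T))\cdot(x(T)-\overline{x}(T))\ge 0$ there and, combined with $\theta_i>0$ and the strict term at $i_0$, makes the sum strictly positive, again a contradiction. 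Because this gradient inequality is a consequence of concavity at a point, this is the delicate step where the concavity content, rather than pseudo-concavity alone, must be brought to bear; the remaining analytic points (the Leibniz rule for $\underline{d}$ on the duality pairing and the integration of an a.e. inequality between piecewise-continuous functions) are routine within the framework of \cite{BY}.
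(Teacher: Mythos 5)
Your argument coincides with the paper's own proof up to and including the key terminal inequality: integrating (S{\sc hm}1) via the Leibniz rule for $\underline{d}$ applied to the pairing $t\mapsto p(t)\cdot(x(t)-\overline{x}(t))$, substituting (TC), and signing the constraint terms through (Si), (S${\ell}$), (S{\sc t}2) and (S{\sc t}3) are exactly the paper's computations, and they correctly yield $\sum_{i=1}^{l}\theta_i D_Hg_i^0(\overline{x}(T))\cdot(x(T)-\overline{x}(T))\le 0$ for every admissible $(x,u)$. Your endgame for the weak Pareto conclusion is also correct: the contrapositive of pseudo-concavity turns a strict improvement of every objective into $D_Hg_i^0(\overline{x}(T))\cdot(x(T)-\overline{x}(T))>0$ for every $i$, which contradicts the displayed inequality once one $\theta_{i_0}>0$ is extracted from (NN) and (Si).

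The gap is in the Pareto case, precisely where you locate ``the crux.'' The gradient inequality $g_i^0(x(T))-g_i^0(\overline{x}(T))\le D_Hg_i^0(\overline{x}(T))\cdot(x(T)-\overline{x}(T))$ is, as you yourself note, a consequence of concavity at $\overline{x}(T)$ --- that is hypothesis (S{\sc t}1), which Theorem \ref{thsm1} does not assume; it assumes only (S{\sc t}1-bis), pseudo-concavity at $\overline{x}(T)$. Pseudo-concavity at a point yields neither the gradient inequality nor the implication $g_i^0(x(T))\ge g_i^0(\overline{x}(T))\Rightarrow D_Hg_i^0(\overline{x}(T))\cdot(x(T)-\overline{x}(T))\ge 0$ (the latter is the quasi-concavity consequence recorded in Section 2, also not assumed for the $g_i^0$); for instance $g(y)=y+y^3$ is pseudo-concave at $0$ on $\R$ yet violates the gradient inequality for every $y>0$. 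So at the indices where the competitor only ties, your sum can a priori pick up negative contributions and the contradiction does not close; you have silently imported a hypothesis the theorem does not grant. For comparison, the paper does not argue index by index: it passes from the derivative inequality directly to $\sum_i\theta_ig_i^0(x(T))\le\sum_i\theta_ig_i^0(\overline{x}(T))$ invoking (S{\sc t}1-bis), concludes that $(\overline{x},\overline{u})$ solves the scalarized problem $(\mathcal{P}_\theta)$, and reads off both Pareto conclusions from that scalarization; that passage in effect treats the weighted sum as pseudo-concave at $\overline{x}(T)$, which itself deserves scrutiny, but it is the route the stated hypotheses are designed for, whereas your fix genuinely requires strengthening (S{\sc t}1-bis) to (S{\sc t}1).
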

\begin{theorem}\label{thsm2}
When $(\overline{x},\overline{u})\in  Adm({\mathcal M})$, under (S{\sc t}1-bis), (S{\sc t}2), (S{\sc t}3), (S{\sc v}1), (S{\sc v}2) if there exists $((\theta_i)_{1\le i \le l}, (\lambda_\alpha)_{1\le \alpha \le m}, (\mu_\beta)_{1\le \beta \le q},p)\in \R^{l+m+q}\times PC^1([0,T],E^*)$ verifying all the conclusions of Theorem \ref{th22} with $(x_0,u_0)=(\overline{x},\overline{u})$ and if the following condition is satisfied
 \begin{itemize}
\item[\bf{(S{\sc hm}2)}] for all $(t,\xi)\in [0,T]\times \Omega$, \\
$H_M^*(t,\xi,p(t))=\max_{\zeta\in U} H_M(t,\xi,\zeta,p(t))$ exists, and for all $t\in [0,T]$ , $[\xi \mapsto H_M^*(t,\xi,p(t))]$ is concave at $\overline{x}(t)$ and G\^ateaux differentiable at $\overline{x}(t)$,
\end{itemize}   
then we have:\\ 
if $(\theta_i)_{1\le i\le l} \neq 0$, then $(\overline{x},\overline{u})$ is a weak Pareto optimal solution of $(\mathcal{M})$,\\
if for all $i\in\{1,...,l\}$, $\theta_i \neq 0$, then $(\overline{x},\overline{u})$ is a Pareto optimal solution of $(\mathcal{M})$.
\end{theorem}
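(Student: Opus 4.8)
The plan is to argue by contradiction via an Arrow-type argument built on the maximized Hamiltonian. Suppose $(\overline{x},\overline{u})$ fails to be (weakly) Pareto optimal, so there is an admissible $(x,u)$ with $g_i^0(x(T)) > g_i^0(\overline{x}(T))$ for all $i$ (weak case) or $g_i^0(x(T)) \ge g_i^0(\overline{x}(T))$ for all $i$ with one strict inequality (strong case); recall $J_i = g_i^0(\cdot(T))$ since $f_i^0=0$ for $(\mathcal{M})$. Writing $v := x(T)-\overline{x}(T)$ and $\phi(t) := p(t)\cdot(x(t)-\overline{x}(t))$, I would establish the two inequalities $p(T)\cdot v \le 0$ and $\sum_{i=1}^l \theta_i D_Hg_i^0(\overline{x}(T))\cdot v \le p(T)\cdot v$, and then contradict the assumed domination.

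For the first inequality I would use the calculus of extended derivatives from \cite{BY}: since $p,x,\overline{x}\in PC^1$, the product $\phi$ lies in $PC^1$, $\phi(0)=0$ because $x(0)=\overline{x}(0)=\xi_0$, and $\phi(T)=\int_0^T \underline{d}\phi(t)\,dt$ with $\underline{d}\phi(t)=\underline{d}p(t)\cdot(x(t)-\overline{x}(t))+p(t)\cdot(\underline{d}x(t)-\underline{d}\overline{x}(t))$ almost everywhere. Substituting the dynamics $\underline{d}x=f(t,x,u)$, $\underline{d}\overline{x}=f(t,\overline{x},\overline{u})$ and the adjoint equation (AE.M), and setting $v_t:=x(t)-\overline{x}(t)$, gives
\[
\underline{d}\phi(t)=-D_{F,2}H_M(t,\overline{x}(t),\overline{u}(t),p(t))\cdot v_t + H_M(t,x(t),u(t),p(t))-H_M(t,\overline{x}(t),\overline{u}(t),p(t)).
\]
The crucial envelope identity is $D_{G,2}H_M^*(t,\overline{x}(t),p(t))=D_{F,2}H_M(t,\overline{x}(t),\overline{u}(t),p(t))$: indeed $\xi\mapsto H_M^*(t,\xi,p(t))-H_M(t,\xi,\overline{u}(t),p(t))$ is nonnegative and vanishes at $\overline{x}(t)$ by (MP.M), so $\overline{x}(t)$ minimizes it; both terms being G\^ateaux differentiable at $\overline{x}(t)$ (by (S\textsc{hm}2) and (S\textsc{v}1)--(S\textsc{v}2)), the first-order condition forces the differentials to coincide. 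Combining this with $H_M(t,x(t),u(t),p(t))\le H_M^*(t,x(t),p(t))$, with $H_M(t,\overline{x}(t),\overline{u}(t),p(t))=H_M^*(t,\overline{x}(t),p(t))$, and with the gradient inequality for the concave-at-$\overline{x}(t)$ map $\xi\mapsto H_M^*(t,\xi,p(t))$ yields $\underline{d}\phi(t)\le 0$ almost everywhere, hence $p(T)\cdot v=\phi(T)\le 0$.

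For the second inequality I would apply the transversality condition (TC) to $v$ and sign the constraint terms. Using slackness (S$\ell$), admissibility $g^\alpha(x(T))\ge 0=g^\alpha(\overline{x}(T))$ on active constraints, and the quasi-concavity (S\textsc{t}2), each term $\lambda_\alpha D_Hg^\alpha(\overline{x}(T))\cdot v$ is $\ge 0$; using $h^\beta(x(T))=0=h^\beta(\overline{x}(T))$ together with the quasi-concavity of both $h^\beta$ and $-h^\beta$ (S\textsc{t}3) forces $D_Hh^\beta(\overline{x}(T))\cdot v=0$. Hence $p(T)\cdot v\ge \sum_i \theta_i D_Hg_i^0(\overline{x}(T))\cdot v$, and combined with the first inequality, $\sum_{i=1}^l \theta_i D_Hg_i^0(\overline{x}(T))\cdot v\le 0$.

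Finally I would close with the pseudo-concavity (S\textsc{t}1-bis). In the weak case every $g_i^0(x(T))>g_i^0(\overline{x}(T))$, so the contrapositive of pseudo-concavity gives $D_Hg_i^0(\overline{x}(T))\cdot v>0$ for all $i$; since $(\theta_i)\neq 0$ and $\theta_i\ge 0$, the weighted sum is strictly positive, contradicting the displayed inequality. The step I expect to be the main obstacle is the strong Pareto conclusion: there pseudo-concavity only yields $D_Hg_{i_0}^0(\overline{x}(T))\cdot v>0$ for the strictly improved index, while for the indices where equality holds the sign of $D_Hg_i^0(\overline{x}(T))\cdot v$ is not controlled by pseudo-concavity alone, so the weighted sum need not be positive. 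Bridging this gap---passing from the first-order terminal inequality to the value inequality $\sum_i\theta_i g_i^0(x(T))\le \sum_i\theta_i g_i^0(\overline{x}(T))$, which is immediate once one uses the gradient inequality for the concave function $\sum_i\theta_i g_i^0$---is the delicate point that the proof must address.
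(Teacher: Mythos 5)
Your argument follows the same mathematical route as the paper, only inlined: the paper first shows that (S{\sc hm}2) implies the pointwise Hamiltonian inequality (S{\sc hm}1) --- your envelope identity $D_{G,2}H_M^*(t,\overline{x}(t),p(t))=D_{F,2}H_M(t,\overline{x}(t),\overline{u}(t),p(t))$ together with the gradient inequality for the concave-at-$\overline{x}(t)$ map $\xi\mapsto H_M^*(t,\xi,p(t))$ is exactly that step --- and then invokes Theorem \ref{thsm1}, whose proof is your integration of $\underline{d}\bigl(p(t)\cdot(x(t)-\overline{x}(t))\bigr)$ combined with (TC), (S$\ell$), (S{\sc t}2), (S{\sc t}3). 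Your weak Pareto conclusion is complete and correct: from $\sum_{i}\theta_i D_Hg_i^0(\overline{x}(T))\cdot(x(T)-\overline{x}(T))\le 0$, coordinatewise strict domination plus the contrapositive of pseudo-concavity forces the weighted sum to be strictly positive, a contradiction.

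The genuine gap is the one you flag yourself: the strong Pareto conclusion is not established. When all $\theta_i>0$ and $(x,u)$ dominates with only one strict inequality, pseudo-concavity gives $D_Hg_{i_0}^0(\overline{x}(T))\cdot v>0$ only for the strict index and controls nothing at the indices where $g_i^0(x(T))=g_i^0(\overline{x}(T))$, so the weighted sum of differentials need not be positive; and the repair you sketch (the gradient inequality for $\sum_i\theta_i g_i^0$) requires concavity, i.e. (S{\sc t}1), which is not among the hypotheses --- (S{\sc t}1-bis) only provides pseudo-concavity. You should be aware that the paper's own proof is no more explicit here: in the proof of Theorem \ref{thsm1} it passes from $\sum_i\theta_i D_Hg_i^0(\overline{x}(T))\cdot v\le 0$ to $\sum_i\theta_i g_i^0(x(T))\le\sum_i\theta_i g_i^0(\overline{x}(T))$ ``thanks to (S{\sc t}1-bis)'', a deduction that would require the weighted sum $\sum_i\theta_i g_i^0$ to be pseudo-concave at $\overline{x}(T)$, which does not follow from pseudo-concavity of the individual $g_i^0$. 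So your diagnosis of the delicate point is accurate; under (S{\sc t}1) in place of (S{\sc t}1-bis) --- the hypothesis the paper itself adopts for the Bolza analogues --- both your argument and the paper's close cleanly, via $\sum_i\theta_i\bigl(g_i^0(x(T))-g_i^0(\overline{x}(T))\bigr)\le\sum_i\theta_i D_Hg_i^0(\overline{x}(T))\cdot v\le 0$.
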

%%%%%
\begin{theorem}\label{thsm3}
When $(\overline{x},\overline{u})\in  Adm({\mathcal M})$, under (S{\sc t}1-bis), (S{\sc t}2), (S{\sc t}3), (S{\sc v}1), (S{\sc v}2) if there exists $((\theta_i)_{1\le i \le l}, (\lambda_\alpha)_{1\le \alpha \le m}, (\mu_\beta)_{1\le \beta \le q},p)\in \R^{l+m+q}\times PC^1([0,T],E^*)$ verifying all the conclusions of Theorem \ref{th22} with $(x_0,u_0)=(\overline{x},\overline{u})$ and if the following condition is satisfied
\begin{itemize}
\item[\bf{(S{\sc hm}3)}] $U$ is a subset of a real normed vector space $Y$ s.t. for all $t\in [0,T]$, $U$ is a neighborhood of $\overline{u}(t)$, and for all $t\in [0,T]$, \\
$[(\xi,\zeta) \mapsto H_M(t,\xi,\zeta,p(t))$ is G\^ateaux differentiable at $(\overline{x}(t),\overline{u}(t))$ and concave at $(\overline{x}(t),\overline{u}(t))$,
\end{itemize}   
then we have:\\ 
if $(\theta_i)_{1\le i\le l} \neq 0$, then $(\overline{x},\overline{u})$ is a weak Pareto optimal solution of $(\mathcal{M})$,\\
if for all $i\in\{1,...,l\}$, $\theta_i \neq 0$, then $(\overline{x},\overline{u})$ is a Pareto optimal solution of $(\mathcal{M})$.
\end{theorem}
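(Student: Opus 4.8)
The plan is to deduce Theorem \ref{thsm3} from Theorem \ref{thsm1} by showing that, in the present setting, condition (Shm3) forces condition (Shm1). Note first that the hypotheses (St1-bis), (St2), (St3) and (Sv1) are common to both statements, and that the multipliers postulated here satisfy \emph{all} the conclusions of Theorem \ref{th22}, hence in particular (NN), (Si), (S$\ell$) and (TC), which are exactly the conclusions Theorem \ref{thsm1} asks for. Therefore, once (Shm1) is established, the two alternatives (weak Pareto optimality when $(\theta_i)_{1\le i\le l}\neq 0$, Pareto optimality when all $\theta_i\neq 0$) follow verbatim from Theorem \ref{thsm1}.

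To establish (Shm1), I would fix $t\in[0,T]$ and an arbitrary admissible process $(x,u)\in Adm(\mathcal{M})$, and exploit concavity at a point. By (Shm3) the map $[(\xi,\zeta)\mapsto H_M(t,\xi,\zeta,p(t))]$ is Gâteaux differentiable at $(\overline{x}(t),\overline{u}(t))$ and concave there; running the computation that follows the definition of concavity at a point (dividing by the convex-combination parameter and letting it tend to $0^+$) gives the first-order inequality
\begin{equation*}
H_M(t,x(t),u(t),p(t)) - H_M(t,\overline{x}(t),\overline{u}(t),p(t)) \le D_{G,2}H_M(t,\overline{x}(t),\overline{u}(t),p(t))\cdot(x(t)-\overline{x}(t)) + D_{G,3}H_M(t,\overline{x}(t),\overline{u}(t),p(t))\cdot(u(t)-\overline{u}(t)),
\end{equation*}
where the joint Gâteaux differential has been split into its two partials. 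Making this rigorous requires checking that $(1-s)\overline{x}(t)+s\,x(t)\in\Omega$ and $(1-s)\overline{u}(t)+s\,u(t)\in U$ for small $s>0$, so that the limits defining the partials are taken along admissible directions; the inclusion in $U$ is immediate since $U$ is a neighborhood of $\overline{u}(t)$ by (Shm3), and this is essentially the only place the geometry of $\Omega$ intervenes.

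Next I would evaluate the two partials. For the control variable, (MP.M) states that $\zeta\mapsto H_M(t,\overline{x}(t),\zeta,p(t))$ is maximized over $U$ at $\overline{u}(t)$; because $U$ is a neighborhood of $\overline{u}(t)$, this maximizer is interior, and first-order stationarity of a Gâteaux-differentiable function at an interior maximum yields $D_{G,3}H_M(t,\overline{x}(t),\overline{u}(t),p(t))=0$. For the state variable, the Gâteaux partial coincides with the Fréchet partial whenever the latter exists, so $D_{G,2}H_M=D_{F,2}H_M$, and (AE.M) gives $D_{F,2}H_M(t,\overline{x}(t),\overline{u}(t),p(t))=-\underline{d}p(t)$. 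Substituting and rearranging transforms the displayed inequality into
\begin{equation*}
H_M(t,\overline{x}(t),\overline{u}(t),p(t)) - H_M(t,x(t),u(t),p(t)) \ge \underline{d}p(t)\cdot(x(t)-\overline{x}(t)),
\end{equation*}
valid for \emph{every} $t\in[0,T]$ and hence a fortiori almost everywhere, which is precisely (Shm1); Theorem \ref{thsm1} then concludes.

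I expect the main obstacle to be the passage from the \emph{global} maximum principle (MP.M) to the vanishing of the control-partial $D_{G,3}H_M$: one must argue that $\overline{u}(t)$ is an interior maximizer and that Gâteaux stationarity there is legitimate, using (Shm3). The accompanying bookkeeping for the concavity-at-a-point inequality—ensuring the convex combinations stay in $\Omega\times U$ for small parameter so that the partials may be extracted along admissible directions—is the secondary technical point to be handled carefully.
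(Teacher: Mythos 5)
Your proposal is correct and follows essentially the same route as the paper: show that (S{\sc hm}3) implies (S{\sc hm}1) via the first-order inequality from concavity plus G\^ateaux differentiability at $(\overline{x}(t),\overline{u}(t))$, evaluate the joint differential using (AE.M) for the state partial and (MP.M) with interiority of $\overline{u}(t)$ in $U$ for the control partial, and then invoke Theorem \ref{thsm1}. Your version merely makes explicit the splitting of $D_{G,(2,3)}H_M$ into its two partials and the stationarity argument that the paper leaves implicit.
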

%%%%%
\begin{remark}
By using our constraint qualifications, we can rewrite the conclusion of Theorem \ref{thsm2} and Theorem \ref{thsm3} as follows. \\
If the condition (A{\sc lib}) or [($QC_1$) and (A{\sc v}3)] is fulfilled for $(x_0,u_0)=(\overline{x},\overline{u})$ then $(\overline{x},\overline{u})$ is a weak Pareto optimal solution of $(\mathcal{M})$,\\
if, for each $j\in\{1,...,l\}$, (A{\sc f})$_j$ is fulfilled for $(x_0,u_0)=(\overline{x},\overline{u})$, then $(\overline{x},\overline{u})$ is a Pareto optimal solution of $(\mathcal{M})$. 
\end{remark}
\begin{theorem}\label{thsb1} 
When $(\overline{x},\overline{u})\in Adm({\mathcal B})$, under (S{\sc t}1), (S{\sc t}2), (S{\sc t}3), (S{\sc i}1), (S{\sc i}2) (S{\sc v}1), (S{\sc v}2) if there exists $((\theta_i)_{1\le i \le l}, (\lambda_\alpha)_{1\le \alpha \le m}, (\mu_\beta)_{1\le \beta \le q},p)$ belongs to $\R^{l+m+q}\times PC^1([0,T],E^*)$ verifying the conclusions (NN), (Si), (S${\ell}$) and (TC) of Theorem \ref{th21} with $(x_0,u_0)=(\overline{x},\overline{u})$ and if the following condition is satisfied
\begin{itemize}
\item[\bf{(S{\sc hb}1)}] For each $(x,u)\in Adm({\mathcal B})$, for all $t\in[0,T]$ almost everywhere for the canonical measure of Borel on $[0,T]$, 
$$H_B(t,\overline{x}(t), \overline{u}(t), p(t), (\theta_i)_{1\le i\le l})-H_B(t,x(t),u(t),p(t),(\theta_i)_{1\le i\le l}) \ge \underline{d}p(t)\cdot (x(t)-\overline{x}(t)),$$  
\end{itemize}
then we have:\\ 
if $(\theta_i)_{1\le i\le l} \neq 0$, then $(\overline{x},\overline{u})$ is a weak Pareto optimal solution of $(\mathcal{B})$,\\
if for all $i\in \{1,...,l\}$, $\theta_i \neq 0 $, then $(\overline{x},\overline{u})$ is a Pareto optimal solution of $(\mathcal{B})$.
\end{theorem}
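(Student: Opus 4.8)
\section*{Proof proposal}

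The plan is to argue by contradiction following the classical Mangasarian scheme, the whole argument being organized around the single scalar master inequality
$\sum_{i=1}^{l}\theta_i\bigl(J_i(x,u)-J_i(\overline{x},\overline{u})\bigr)\le 0$, which I will establish for every $(x,u)\in Adm(\mathcal{B})$. Granting it, both conclusions are immediate. If $(\theta_i)_{1\le i\le l}\neq 0$, then since each $\theta_i\ge 0$ by (Si) some $\theta_{i_0}>0$; a process with $J_i(x,u)>J_i(\overline{x},\overline{u})$ for all $i$ would make the weighted sum strictly positive, contradicting the master inequality, so $(\overline{x},\overline{u})$ is weakly Pareto optimal. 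If every $\theta_i\neq 0$, i.e. $\theta_i>0$ for all $i$, a process with $J_i(x,u)\ge J_i(\overline{x},\overline{u})$ for all $i$ and one strict inequality again forces the weighted sum to be positive, yielding Pareto optimality.

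To prove the master inequality, fix $(x,u)\in Adm(\mathcal{B})$, set $y:=x-\overline{x}$, and note $y(0)=\xi_0-\xi_0=0$. First I would bound the terminal part. By (St1) each $g_i^0$ is concave and Hadamard differentiable at $\overline{x}(T)$, so dividing the concavity inequality along the segment $[\overline{x}(T),x(T)]$ by $t$ and letting $t\to 0^+$ gives the gradient inequality $g_i^0(x(T))-g_i^0(\overline{x}(T))\le D_Hg_i^0(\overline{x}(T))\cdot y(T)$. Multiplying by $\theta_i\ge 0$, summing over $i$, and substituting the transversality condition (TC) to replace $\sum_i\theta_i D_Hg_i^0(\overline{x}(T))$ by $p(T)-\sum_\alpha\lambda_\alpha D_Hg^\alpha(\overline{x}(T))-\sum_\beta\mu_\beta D_Hh^\beta(\overline{x}(T))$, I reduce the terminal contribution to $p(T)\cdot y(T)$ plus constraint corrections. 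The $\lambda_\alpha$ corrections are favorable: where $\lambda_\alpha>0$, slackness (S$\ell$) forces $g^\alpha(\overline{x}(T))=0\le g^\alpha(x(T))$, and the differentiable quasi-concavity property recorded at the end of Section~2 gives $D_Hg^\alpha(\overline{x}(T))\cdot y(T)\ge 0$; where $\lambda_\alpha=0$ the term drops. The $\mu_\beta$ corrections vanish: applying that same property to both $h^\beta$ and $-h^\beta$ (both admissible endpoints satisfy $h^\beta=0$) forces $D_Hh^\beta(\overline{x}(T))\cdot y(T)=0$. Hence $\sum_i\theta_i\bigl(g_i^0(x(T))-g_i^0(\overline{x}(T))\bigr)\le p(T)\cdot y(T)$.

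Next I would transfer $p(T)\cdot y(T)$ to the integral. Since $y(0)=0$, the fundamental-theorem relation for $PC^1$ functions gives $p(T)\cdot y(T)=\int_0^T\underline{d}(p\cdot y)(t)\,dt$, and the product rule (valid off the finite corner set, hence a.e.) yields $\underline{d}(p\cdot y)=\underline{d}p\cdot y+p\cdot\underline{d}y$. Using the dynamics of both admissible processes, $\underline{d}y=f(t,x,u)-f(t,\overline{x},\overline{u})$, and the definition of $H_B$ lets me write $p\cdot(f(t,x,u)-f(t,\overline{x},\overline{u}))=\bigl(H_B(t,x,u,p,\theta)-H_B(t,\overline{x},\overline{u},p,\theta)\bigr)-\sum_i\theta_i\bigl(f_i^0(t,x,u)-f_i^0(t,\overline{x},\overline{u})\bigr)$. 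Invoking (Shb1) to bound the Hamiltonian difference above by $-\underline{d}p\cdot y$, the two $\underline{d}p\cdot y$ terms cancel inside the integral and I obtain $p(T)\cdot y(T)\le-\sum_i\theta_i\int_0^T\bigl(f_i^0(t,x,u)-f_i^0(t,\overline{x},\overline{u})\bigr)\,dt$. Chaining this with the terminal bound produces exactly $\sum_i\theta_i\bigl(J_i(x,u)-J_i(\overline{x},\overline{u})\bigr)\le 0$.

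The step demanding the most care, and the main obstacle, is the integration by parts over $[0,T]$: I must justify that $p\cdot y$ is itself $PC^1$, that the product rule for the extended derivative $\underline{d}(p\cdot y)$ holds off a finite set, and that the corner sets of $p$, $x$ and $\overline{x}$ together form a finite (hence Borel-null) set, so that the pointwise identities and the almost-everywhere inequality (Shb1) are compatible under the Riemann integral. The remainder is sign bookkeeping: (Si) supplies $\theta_i,\lambda_\alpha\ge 0$, (S$\ell$) activates only the binding inequality constraints, and the two-sided quasi-concavity of (St3) annihilates the equality-constraint multipliers $\mu_\beta$.
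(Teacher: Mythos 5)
Your proposal is correct and follows essentially the same route as the paper's proof: concavity of the $g_i^0$ at $\overline{x}(T)$ combined with (TC), the sign/slackness conditions and the quasi-concavity hypotheses (S{\sc t}2)--(S{\sc t}3) to reduce the terminal contribution to $p(T)\cdot(x(T)-\overline{x}(T))$, then integration by parts with the extended derivative and (S{\sc hb}1) to obtain the scalarized inequality $\sum_i\theta_i J_i(x,u)\le\sum_i\theta_i J_i(\overline{x},\overline{u})$, from which both Pareto conclusions follow by the standard contradiction. Your extra care about the product rule for $\underline{d}(p\cdot y)$ off the finite union of corner sets is a sound justification of a step the paper performs without comment.
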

\begin{theorem}\label{thsb2}
When $(\overline{x},\overline{u})\in Adm({\mathcal B})$, under (S{\sc t}1), (S{\sc t}2), (S{\sc t}3), (S{\sc i}1), (S{\sc i}2), (S{\sc v}1), (S{\sc v}2) if there exists $((\theta_i)_{1\le i \le l}, (\lambda_\alpha)_{1\le \alpha \le m}, (\mu_\beta)_{1\le \beta \le q},p)$ belongs to $\R^{l+m+q}\times PC^1([0,T],E^*)$ verifying all the conclusions of Theorem  \ref{th21} with $(x_0,u_0)=(\overline{x},\overline{u})$ and if the following condition is satisfied
\begin{itemize}
\item[\bf{(S{\sc hb}2)}] for all $(t,\xi)\in [0,T]\times \Omega$, \\
$H_B^*(t,\xi,p(t),(\theta_i)_{1\le i \le l})=\max_{\zeta\in U} H_B(t,\xi,\zeta,p(t),(\theta_i)_{1\le i \le l})$ exists, and for all $t\in[0,T]$, $[\xi \mapsto H_B^*(t,\xi,p(t),(\theta_i)_{1\le i \le l})]$ is concave at $\overline{x}(t)$ and G\^ateaux differentiable at $\overline{x}(t)$,
\end{itemize}   
then we have:\\ 
if $(\theta_i)_{1\le i\le l} \neq 0$, then $(\overline{x},\overline{u})$ is a weak Pareto optimal solution of $(\mathcal{B})$,\\
if for all $i\in \{1,...,l\}$, $\theta_i \neq 0 $, then $(\overline{x},\overline{u})$ is a Pareto optimal solution of $(\mathcal{B})$.
\end{theorem}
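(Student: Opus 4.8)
The plan is to \emph{reduce Theorem \ref{thsb2} to Theorem \ref{thsb1}} by showing that, under the present hypotheses, the Arrow-type condition (S\textsc{hb}2) forces the Mangasarian-type pointwise condition (S\textsc{hb}1). The two theorems share all their remaining hypotheses — (S\textsc{t}1), (S\textsc{t}2), (S\textsc{t}3), (S\textsc{i}1), (S\textsc{i}2), (S\textsc{v}1), (S\textsc{v}2), and the existence of multipliers satisfying (NN), (Si), (S$\ell$), (TC), which here are part of the \emph{full} conclusions of Theorem \ref{th21} that we are given. Hence, once (S\textsc{hb}1) is established for the same data, the conclusion of Theorem \ref{thsb1} is verbatim the desired conclusion and the proof is finished.

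So the real work is to prove, for every $t\in[0,T]$ and every admissible $(x,u)$,
$$H_B(t,\overline{x}(t),\overline{u}(t),p(t),(\theta_i))-H_B(t,x(t),u(t),p(t),(\theta_i))\ge \underline{d}p(t)\cdot(x(t)-\overline{x}(t)).$$
First I would use the maximum principle (MP.B): since $\overline{u}(t)$ maximizes $\zeta\mapsto H_B(t,\overline{x}(t),\zeta,p(t),(\theta_i))$, we have $H_B(t,\overline{x}(t),\overline{u}(t),\ldots)=H_B^*(t,\overline{x}(t),\ldots)$, whereas for the competitor $H_B(t,x(t),u(t),\ldots)\le H_B^*(t,x(t),\ldots)$ by definition of the maximized Hamiltonian. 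Hence it suffices to prove
$$H_B^*(t,\overline{x}(t),\ldots)-H_B^*(t,x(t),\ldots)\ge \underline{d}p(t)\cdot(x(t)-\overline{x}(t)).$$

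Next I would exploit the concavity of $\xi\mapsto H_B^*(t,\xi,\ldots)$ at $\overline{x}(t)$ from (S\textsc{hb}2). Combined with its Gâteaux differentiability there, letting the convex-combination parameter in the definition of concavity at a point tend to $0^+$ yields the gradient inequality
$$H_B^*(t,\overline{x}(t),\ldots)-H_B^*(t,x(t),\ldots)\ge -\,D_GH_B^*(t,\overline{x}(t),\ldots)\cdot(x(t)-\overline{x}(t)).$$
Everything then reduces to the \emph{envelope identity} $D_GH_B^*(t,\overline{x}(t),\ldots)=-\underline{d}p(t)$, after which the adjoint equation (AE.B), $\underline{d}p(t)=-D_{F,2}H_B(t,\overline{x}(t),\overline{u}(t),\ldots)$, closes the estimate. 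To obtain it I set $\psi_t(\xi):=H_B^*(t,\xi,\ldots)-H_B(t,\xi,\overline{u}(t),\ldots)$. By definition of $H_B^*$ one has $\psi_t\ge 0$ on $\Omega$, and by (MP.B) $\psi_t(\overline{x}(t))=0$, so $\overline{x}(t)$ is a global minimizer of $\psi_t$. The map $\xi\mapsto H_B(t,\xi,\overline{u}(t),\ldots)$ is Fréchet (hence Gâteaux) differentiable at $\overline{x}(t)$ by (S\textsc{i}2) and (S\textsc{v}2), while $H_B^*$ is Gâteaux differentiable there by (S\textsc{hb}2); thus $\psi_t$ is Gâteaux differentiable at its minimizer, giving $D_G\psi_t(\overline{x}(t))=0$, i.e. $D_GH_B^*(t,\overline{x}(t),\ldots)=D_{F,2}H_B(t,\overline{x}(t),\overline{u}(t),\ldots)=-\underline{d}p(t)$.

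The main obstacle is precisely this last envelope step: passing from ``$\overline{x}(t)$ minimizes the nonnegative, Gâteaux-differentiable $\psi_t$'' to ``$D_G\psi_t(\overline{x}(t))=0$.'' A one-sided difference quotient only gives $D_G\psi_t(\overline{x}(t))\cdot h\ge 0$ for admissible increment directions $h$; the reverse inequality (and thus nullity of the whole differential) needs $-h$ to be admissible too, i.e. $\overline{x}(t)$ interior to $\Omega$. I would therefore make explicit that $\Omega$ is open — already implicit in assumptions such as (A\textsc{v}1), where $D_{G,2}f(t,\xi,\zeta)$ is required to exist at every $\xi\in\Omega$ — which legitimizes two-sided variations at $\overline{x}(t)$. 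With the envelope identity in hand, substitution into the concavity inequality produces (S\textsc{hb}1) for every $t\in[0,T]$, and Theorem \ref{thsb1} then delivers the weak Pareto conclusion when $(\theta_i)_{1\le i\le l}\ne 0$ and the Pareto conclusion when all $\theta_i\ne 0$.
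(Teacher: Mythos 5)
Your proposal is correct and follows essentially the same route as the paper: both reduce Theorem~\ref{thsb2} to Theorem~\ref{thsb1} by showing that (S{\sc hb}2), combined with (MP.B), (AE.B) and the concavity of $H_B^*$ at $\overline{x}(t)$, implies (S{\sc hb}1). The only (harmless) variation is that you establish the full envelope identity $D_GH_B^*(t,\overline{x}(t),p(t),\theta)=-\underline{d}p(t)$ by two-sided variations at the interior minimizer of $\xi\mapsto H_B^*(t,\xi,p(t),\theta)-H_B(t,\xi,\overline{u}(t),p(t),\theta)$, whereas the paper only extracts the one-sided directional inequality $D_{G,2}H_B^*(t,\overline{x}(t),p(t),\theta)\cdot(\overline{x}(t)-x(t))\ge D_{G,2}H_B(t,\overline{x}(t),\overline{u}(t),p(t),\theta)\cdot(\overline{x}(t)-x(t))$, which is all it needs.
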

%%%%%%
\begin{theorem}\label{thsb3}
When $(\overline{x},\overline{u})\in Adm({\mathcal B})$, under (S{\sc t}1), (S{\sc t}2), (S{\sc t}3), (S{\sc i}1), (S{\sc i}2), (S{\sc v}1), (S{\sc v}2) if there exists $((\theta_i)_{1\le i \le l}, (\lambda_\alpha)_{1\le \alpha \le m}, (\mu_\beta)_{1\le \beta \le q},p)$ belongs to $\R^{l+m+q}\times PC^1([0,T],E^*)$ verifying all the conclusions of Theorem  \ref{th21} with $(x_0,u_0)=(\overline{x},\overline{u})$ and if the following condition is satisfied
\begin{itemize}
\item[\bf{(S{\sc hb}3)}] $U$ is a subset of a real normed vector space $Y$ s.t. for all $t\in [0,T]$, $U$ is a neighborhood of $\overline{u}(t)$, and for all $t\in [0,T]$, \\
$[(\xi,\zeta) \mapsto H_B(t,\xi,\zeta,p(t),(\theta_i)_{1\le i \le l})$ is G\^ateaux differentiable at $(\overline{x}(t),\overline{u}(t))$ and concave at $(\overline{x}(t),\overline{u}(t))$,
\end{itemize}   
then we have:\\ 
if $(\theta_i)_{1\le i\le l} \neq 0$, then $(\overline{x},\overline{u})$ is a weak Pareto optimal solution of $(\mathcal{B})$,\\
if for all $i\in \{1,...,l\}$, $\theta_i \neq 0 $, then $(\overline{x},\overline{u})$ is a Pareto optimal solution of $(\mathcal{B})$.
\end{theorem}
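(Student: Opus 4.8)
The plan is to deduce Theorem~\ref{thsb3} from Theorem~\ref{thsb1}. The two statements share the hypotheses (S{\sc t}1), (S{\sc t}2), (S{\sc t}3), (S{\sc i}1), (S{\sc i}2), (S{\sc v}1), (S{\sc v}2) and both posit a tuple $((\theta_i)_{1\le i\le l},(\lambda_\alpha)_{1\le\alpha\le m},(\mu_\beta)_{1\le\beta\le q},p)$. Theorem~\ref{thsb3} assumes that this tuple verifies \emph{all} the conclusions of Theorem~\ref{th21} (in particular the adjoint equation (AE.B) and the maximum principle (MP.B)) together with (S{\sc hb}3), whereas Theorem~\ref{thsb1} only needs (NN), (Si), (S$\ell$), (TC) and the pointwise inequality (S{\sc hb}1). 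Since (NN), (Si), (S$\ell$), (TC) are among the conclusions hypothesized here, it suffices to prove that (S{\sc hb}3), combined with (AE.B), (MP.B), (S{\sc i}2) and (S{\sc v}2), implies (S{\sc hb}1); the two assertions (weak Pareto optimality when $(\theta_i)_{1\le i\le l}\neq 0$, Pareto optimality when every $\theta_i\neq 0$) then follow verbatim from Theorem~\ref{thsb1}, which also absorbs all the work downstream of (S{\sc hb}1): the concavity/quasi-concavity handling of the terminal data through (TC), the integration by parts for $t\mapsto p(t)\cdot(x(t)-\overline{x}(t))$ via the fundamental theorem for $\underline{d}$, and the final sign argument yielding $\sum_i\theta_i(J_i(x,u)-J_i(\overline{x},\overline{u}))\le 0$.

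To establish the implication, fix $(x,u)\in Adm(\mathcal{B})$, fix $t\in[0,T]$, set $\theta:=(\theta_i)_{1\le i\le l}$ and write $\Phi_t(\xi,\zeta):=H_B(t,\xi,\zeta,p(t),\theta)$. By (S{\sc hb}3), $\Phi_t$ is concave and G\^ateaux differentiable at $(\overline{x}(t),\overline{u}(t))$; dividing the concavity-at-a-point inequality by the step and letting it tend to $0^+$ gives the gradient inequality, which evaluated at $(x(t),u(t))$ and split along the product structure of the increment reads
\[
\Phi_t(x(t),u(t))-\Phi_t(\overline{x}(t),\overline{u}(t)) \le D_{G,2}H_B(t,\overline{x}(t),\overline{u}(t),p(t),\theta)\cdot(x(t)-\overline{x}(t)) + D_{G,3}H_B(t,\overline{x}(t),\overline{u}(t),p(t),\theta)\cdot(u(t)-\overline{u}(t)).
\]
I then identify the two partials. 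For the control term, (MP.B) says that $\overline{u}(t)$ maximizes $\zeta\mapsto\Phi_t(\overline{x}(t),\zeta)$ over $U$; since by (S{\sc hb}3) $U$ is a neighbourhood of $\overline{u}(t)$ and this map is G\^ateaux differentiable at the interior point $\overline{u}(t)$, testing directions $h$ and $-h$ forces $D_{G,3}H_B(t,\overline{x}(t),\overline{u}(t),p(t),\theta)=0$. For the state term, the Fr\'echet partials $D_{F,2}f_i^0(t,\overline{x}(t),\overline{u}(t))$ and $D_{F,2}f(t,\overline{x}(t),\overline{u}(t))$ exist by (S{\sc i}2) and (S{\sc v}2), hence $D_{F,2}H_B(t,\overline{x}(t),\overline{u}(t),p(t),\theta)$ exists and, a Fr\'echet differential coinciding with the G\^ateaux one whenever it exists, equals $D_{G,2}H_B(t,\overline{x}(t),\overline{u}(t),p(t),\theta)$; by (AE.B) this is $-\underline{d}p(t)$. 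Substituting both identifications gives $\Phi_t(\overline{x}(t),\overline{u}(t))-\Phi_t(x(t),u(t))\ge\underline{d}p(t)\cdot(x(t)-\overline{x}(t))$, which is exactly (S{\sc hb}1) (and in fact holds for every $t$, not merely almost everywhere). Theorem~\ref{thsb1} now applies and yields the conclusion.

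The step I expect to be the main obstacle is precisely this reconciliation of two notions of differentiability: (S{\sc hb}3) supplies only the \emph{joint G\^ateaux} differential of $\Phi_t$, while (AE.B) is phrased with the \emph{Fr\'echet partial} $D_{F,2}H_B$. The bridge rests on two facts recorded in Section~2 — that the partial extracted from a joint G\^ateaux differential is the G\^ateaux differential of the corresponding partial map, for which (S{\sc i}2)--(S{\sc v}2) already furnish a Fr\'echet differential, and that an existing Fr\'echet differential agrees with the G\^ateaux one. One must also be sure that $\overline{u}(t)$ is a genuine interior maximizer so that the vanishing of the control partial is legitimate, which is what the clause ``$U$ is a neighbourhood of $\overline{u}(t)$'' in (S{\sc hb}3) guarantees. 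A minor point of care, handled by the conventions of \cite{BY}, is that the concavity-at-a-point gradient inequality for $\Phi_t$ be taken along admissible increments staying in $\Omega\times U$; this is automatic for small steps since $(\overline{x}(t),\overline{u}(t))$ and $(x(t),u(t))$ are admissible and $U$ is a neighbourhood of $\overline{u}(t)$.
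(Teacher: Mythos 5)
Your proof is correct and follows essentially the same route as the paper: both reduce Theorem \ref{thsb3} to Theorem \ref{thsb1} by showing that (S{\sc hb}3), combined with (AE.B) and (MP.B), implies (S{\sc hb}1) via the gradient inequality for the jointly concave, G\^ateaux differentiable Hamiltonian. Your write-up merely makes explicit two steps the paper leaves implicit, namely the vanishing of the control partial at the interior maximizer $\overline{u}(t)$ and the identification of the G\^ateaux state partial with the Fr\'echet partial appearing in (AE.B).
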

\begin{remark}
By using our constraint qualifications, we can rewrite the conclusion of Theorem \ref{thsb2} and Theorem \ref{thsb3} as follows.\\ 
If the condition (A{\sc lib}) or [($QC_1$) and (A{\sc v}3)] is fulfilled for $(x_0,u_0)=(\overline{x},\overline{u})$ then $(\overline{x},\overline{u})$ is a weak Pareto optimal solution of $(\mathcal{B})$,\\
if, for each $j\in\{1,...,l\}$, (A{\sc f})$^0_j$ is fulfilled for $(x_0,u_0)=(\overline{x},\overline{u})$, then $(\overline{x},\overline{u})$ is a Pareto optimal solution of $(\mathcal{B})$.
 \end{remark}

\section{Proof of the necessary conditions} 
\subsection{Proof of the Theorem \ref{th22}}
\begin{lemma}\label{lem1} For all $i\in\{1,...,l\}$, $(x_0,u_0)$ is a solution of the following single-objective Mayer problem
\[
({\mathcal M_i})
\left\{
\begin{array}{cl}
{\rm Maximize} & J_i(x,u):=g_i^0(x(T)) \\
{\rm subject \;  to} & x \in PC^1([0,T], \Omega), u \in NPC^0([0,T], U)\\
\null & \forall t\in [0,T],\, \underline{d}x(t) = f(t,x(t), u(t)), \; x(0) = \xi_0\\
\null & \forall k \in\{ 1,...,l\},\, k\neq i, \; \; g_k^{0}(x(T)) \geq g_k^{0}(x_0(T))\\
\null & \forall \alpha \in\{ 1,...,m\}, \; \; g^{\alpha}(x(T)) \geq 0\\
\null & \forall \beta \in \{1,..., q\}, \; \; h^{\beta}(x(T)) = 0.
\end{array}\right.
\]
\end{lemma}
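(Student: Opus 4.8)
The plan is to argue by contradiction, exploiting the fact that the extra constraints imposed in $(\mathcal{M}_i)$ are precisely what is needed to convert a strict improvement of the $i$-th objective into a Pareto-dominating process for the full multiobjective problem $(\mathcal{M})$. First I would check admissibility: since $(x_0,u_0)\in Adm(\mathcal{M})$ it satisfies the dynamics $\underline{d}x_0(t)=f(t,x_0(t),u_0(t))$, the initial condition $x_0(0)=\xi_0$, the inequality constraints $g^{\alpha}(x_0(T))\ge 0$ and the equality constraints $h^{\beta}(x_0(T))=0$; the only additional requirements of $(\mathcal{M}_i)$ are $g_k^0(x_0(T))\ge g_k^0(x_0(T))$ for $k\neq i$, which hold trivially with equality. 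Hence $(x_0,u_0)$ is admissible for $(\mathcal{M}_i)$, and it remains to show it is a maximizer of $J_i$ over $Adm(\mathcal{M}_i)$.

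Next I would suppose, for contradiction, that $(x_0,u_0)$ does not solve $(\mathcal{M}_i)$. Then there exists an admissible process $(x,u)$ for $(\mathcal{M}_i)$ with $g_i^0(x(T))>g_i^0(x_0(T))$. The key observation is that any $(x,u)\in Adm(\mathcal{M}_i)$ automatically lies in $Adm(\mathcal{M})$: the dynamics, the initial condition, and the constraints $g^{\alpha}(x(T))\ge 0$, $h^{\beta}(x(T))=0$ appear identically in both problems, so the only difference between the two admissible sets is the presence in $(\mathcal{M}_i)$ of the \emph{additional} constraints $g_k^0(x(T))\ge g_k^0(x_0(T))$, $k\neq i$. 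Dropping these can only enlarge the admissible set, whence $Adm(\mathcal{M}_i)\subset Adm(\mathcal{M})$.

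Finally I would read off the contradiction. For $(x,u)$ as above, the constraints of $(\mathcal{M}_i)$ give $J_k(x,u)=g_k^0(x(T))\ge g_k^0(x_0(T))=J_k(x_0,u_0)$ for every $k\neq i$, while the supposed strict improvement gives $J_i(x,u)=g_i^0(x(T))>g_i^0(x_0(T))=J_i(x_0,u_0)$. Thus $(x,u)\in Adm(\mathcal{M})$ satisfies $J_k(x,u)\ge J_k(x_0,u_0)$ for all $k\in\{1,\dots,l\}$ with strict inequality at $i_0=i$, which is exactly a violation of the Pareto optimality of $(x_0,u_0)$ for $(\mathcal{M})$. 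This contradiction establishes that $(x_0,u_0)$ solves $(\mathcal{M}_i)$, and since $i$ was arbitrary the claim holds for all $i\in\{1,\dots,l\}$.

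I do not expect any genuine obstacle here; the argument is a standard scalarization-by-constraints reduction. The only point requiring a little care is the bookkeeping of which constraints are common to $(\mathcal{M})$ and $(\mathcal{M}_i)$ and which are added, so as to confirm rigorously both that $(x_0,u_0)$ remains admissible for $(\mathcal{M}_i)$ and that a competitor for $(\mathcal{M}_i)$ is always a competitor for $(\mathcal{M})$.
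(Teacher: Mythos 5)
Your argument is correct and is essentially the paper's own proof: both proceed by contradiction, observing that any admissible process for $(\mathcal{M}_i)$ improving $g_i^0$ is an admissible process for $(\mathcal{M})$ that Pareto-dominates $(x_0,u_0)$. Your additional check that $(x_0,u_0)$ is itself admissible for $(\mathcal{M}_i)$ is a welcome (if routine) piece of bookkeeping the paper leaves implicit.
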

\begin{proof}
Let $i\in\{1,...,l\}$. We proceed by contradiction, we assume that $(x_0,u_0)$ is not a solution of $({\mathcal M_i})$ i.e. there exists $(x,u)$ an admissible process of $({\mathcal M_i})$ s.t. $g^0_i(x(T))>g_i^0(x_0(T))$.\\
This can be rewritten $(x,u)\in Adm({\mathcal M})$ s.t. $g^0_i(x(T))>g_i^0(x_0(T))$ and for all $k \in\{ 1,...,l\},\, k\neq i,$ $g_k^{0}(x(T)) \geq g_k^{0}(x_0(T))$.\\
Therefore, $(x_0,u_0)$ is not a Pareto optimal solution. This is a contradiction.
\end{proof}
For each $x\in\Omega$, for each $i\in \{2,...,l\}$, we set $\mathfrak{g}_i(x)=g_i^0(x)-g_i^0(x_0(T))$.\\     
Thanks to (A{\sc t}1), for each $i\in \{2,...,l\}$, $\mathfrak{g}_i$ is Hadamard differentiable at $x_0(T)$ and $D_H\mathfrak{g}_i(x_0(T))=D_Hg_i^0(x_0(T))$. \\
Consequently, by using the Lemma \ref{lem1} and (A{\sc t}2), (A{\sc t}3), (A{\sc v}1), (A{\sc v}2), the assumptions of Theorem 2.4 in \cite{BY1} are fulfilled for (${\mathcal M_1}$)
\[
({\mathcal M_1})
\left\{
\begin{array}{cl}
{\rm Maximize} & g_1^0(x(T)) \\
{\rm subject \;  to} & x \in PC^1([0,T], \Omega), u \in NPC^0([0,T], U)\\
\null & \forall t\in [0,T],\, \underline{d}x(t) = f(t,x(t), u(t)), \; x(0) = \xi_0\\
\null & \forall i \in\{ 2,...,l\}, \; \; {\mathfrak g}_i(x(T)) \ge 0\\
\null & \forall \alpha \in\{ 1,...,m\}, \; \; g^{\alpha}(x(T)) \geq 0\\
\null & \forall \beta \in \{1,..., q\}, \; \; h^{\beta}(x(T)) = 0.
\end{array}\right.
\]
Hence, we obtain that there exists $(\theta_i)_{1 \le i \le l}\in \R^l$, $(\lambda_{\alpha})_{1 \leq \alpha \leq m} \in \R^{m}$, $(\mu_{\beta})_{1 \leq \beta \leq q} \in \R^q$ and an adjoint function $p \in PC^1([0,T], E^*)$ which satisfy the following conditions.
\begin{itemize}
\item[(NN{\sc s})] $((\theta_i)_{1 \le i\le l},(\lambda_{\alpha})_{1 \leq \alpha \leq m}$, $(\mu_{\beta})_{1 \leq \beta \leq q})\neq 0$.
\item[(Si{\sc s})] For all $i\in \{1,...,l\}$, $\theta_i\ge 0$ and for all $\alpha \in \{1,...,m \}$, $\lambda_{\alpha} \geq 0$.
\item[(S${\ell}${\sc s})] For all $i\in\{2,...,l\}$, $\theta_i\mathfrak{g}_i(x_0(T)) = 0$ and for all $\alpha \in \{1,...,m \}$, $\lambda_{\alpha} g^{\alpha}(x_0(T)) = 0$.
\item[(TC{\sc s})] $\sum_{i=1}^{l} \theta_iD_Hg_i^0(x_0(T))+\sum_{\alpha = 1}^{m} \lambda_{\alpha} D_Hg^{\alpha}(x_0(T)) + \sum_{\beta = 1}^q \mu_{\beta} D_Hh^{\beta}(x_0(T)) = p(T)$.
\item[(AE.M{\sc s})] $\underline{d}p(t) = - D_{F,2}H_M(t, x_0(t),u_0(t), p(t))$ for all $t \in [0,T]$.
\item[(MP.M{\sc s})] For all $t \in [0,T]$, for all $\zeta \in U$, \\
$H_M(t, x_0(t), u_0(t), p(t)) \geq H_M(t, x_0(t), \zeta, p(t))$.
\item[(CH.M{\sc s})] $\bar{H}_M := [ t \mapsto H_M(t, x_0(t), u_0(t), p(t))] \in C^0([0,T], \R)$.
\end{itemize} 
Therefore, since for all $i\in\{2,...,l\}$, $\mathfrak{g}_i(x_0(T)) = 0$, (NN{\sc s}), (Si{\sc s}), (S${\ell}${\sc s}), (TC{\sc s}), (AE.M{\sc s}) , (MP.M{\sc s}) and (CH.M{\sc s}) are equivalent to (NN), (Si), (S${\ell}$), (TC), (AE.M) , (MP.M) and (CH.M).
Therefore, the proof Theorem \ref{th22} is complete.
\subsection{Proof of Corollary \ref{cor21}}
We proceed by contradiction by assuming that there exists $t_1 \in [0,T]$ such $((\theta_i)_{1\le i\le l}, p(t_1)) = (0,0)$.\\
Since (AE.M) is an homogeneous linear equation, and by using the uniqueness of the Cauchy problem ((AE.M), $p(t_1) = 0$), we obtain that $p$ is equal to zero on $[0,T]$, in particular we have $p(T) = 0$.\\
Hence, by using (TC), (Si), (S${\ell}$), (QC$_1$), we obtain that $(\forall \alpha \in\{1,...,m\}, \lambda_{\alpha} = 0)$ and $(\forall \beta \in\{1,...,q\},\, \mu_{\beta} = 0)$.\\
Therefore, since $(\theta_i)_{1\le i\le l} = 0$, we have $((\theta_i)_{1 \le i\le l},(\lambda_{\alpha})_{1 \leq \alpha \leq m}$, $(\mu_{\beta})_{1 \leq \beta \leq q})= 0$ which is a contradiction with (NN).
\subsection{Proof of Corollary \ref{cor22}}
We proceed by contradiction by assuming that there exists $t_1 \in [0,T]$ such $p(t_1) = 0$.\\
Since (AE.M) is an homogeneous linear equation, and by using the uniqueness of the Cauchy problem ((AE.M), $p(t_1) = 0$, we obtain that $p$ is equal to zero on $[0,T]$, in particular we have $p(T) = 0$.\\
Consequently, by using (TC), (Si), (S${\ell}$), (QC$_0$), we obtain that $((\theta_i)_{1 \le i\le l},(\lambda_{\alpha})_{1 \leq \alpha \leq m}$, $(\mu_{\beta})_{1 \leq \beta \leq q})= 0$ which is a contradiction with (NN).
\subsection{Proof of Corollary \ref{cor23}}
%%%%%%%%%%%%%%%%%%%%%%%%%%%%%%%%%%%%%%%%%%%%%%%%%%%%%%%%%%%%%%%%%%%%%%%%%%%%%%
We proceed by contradiction, we assume that \\$(\theta_i)_{1\le i\le l} = 0$.
Since $D_{G,3}f(\hat{t},x_0(\hat{t}), u_0(\hat{t}))$ exists, $D_{G,3}H_{M}(\hat{t},x_0(\hat{t}),u_0(\hat{t}),p(\hat{t}))$ exists and 
$$D_{G,3}H_{M}(\hat{t},x_0(\hat{t}),u_0(\hat{t}),p(\hat{t}))=p(\hat{t})\circ D_{G,3}f(\hat{t},x_0(\hat{t}),u_0(\hat{t})).$$
Therefore, by using (MP.M), we have $p(\hat{t})\circ D_{G,3}f(\hat{t},x_0(\hat{t}),u_0(\hat{t}))=0$.\\
Since $D_{G,3}f(\hat{t},x_0(\hat{t}),u_0(\hat{t}))$ is surjective, we have $p(\hat{t})=0$.\\
This is a contradiction with the Corollary \ref{cor21}, therefore $(\theta_i)_{1\le i\le l}\neq 0$.\\
\subsection{Proof the Corolloray \ref{cor24}} We proceed by contradiction, we assume that $(\theta_i)_{1\le i\le l} = 0$.\\
Since $D_{G,3}f(T,x_0(T), u_0(T))$ exists, $D_{G,3}H_{M}(T,x_0(T),u_0(T),p(T))$ exists and 
$$D_{G,3}H_{M}(T,x_0(T),u_0(T),p(T))=p(T)\circ D_{G,3}f(T,x_0(T),u_0(T)).$$
Consequently, by using (MP.M), we have $p(T)\circ D_{G,3}f(T,x_0(T),u_0(T))=0$.\\
That is why, thanks to (TC) and $(\theta_i)_{1\le i\le l} = 0$, we obtain that
\[
\left.
\begin{array}{l}   
\sum_{\alpha=1}^{m} \lambda_\alpha D_Hg^\alpha(x_0(T))\circ D_{G,3}f(T,x_0(T),u_0(T)) \\+
\sum_{\beta=1}^{q} \mu_\beta D_Hh^\beta(x_0(T))\circ D_{G,3}f(T,x_0(T),u_0(T))=0.
\end{array}
\right\}
\]
Hence, thanks to (A{\sc lib}), we have $((\lambda_\alpha)_{1\le \alpha\le m},(\mu_\beta)_{1\le \beta \le q})=0$.\\
Consequently, since $(\theta_i)_{1\le i\le l} = 0$, we have $((\theta_i)_{1 \le i\le l},(\lambda_{\alpha})_{1 \leq \alpha \leq m}$, $(\mu_{\beta})_{1 \leq \beta \leq q})= 0$ this a contradiction with (NN).
%%%%%%%%%%%%%%%
\subsection{Proof the Corolloray \ref{cor25}}Let $j\in\{1,...,l\}$. We assume that (A{\sc f})$_j$.\\
We proceed by contradiction, we assume that $\theta_j= 0$.\\
Since $D_{G,3}f(T,x_0(T), u_0(T))$ exists, $D_{G,3}H_{M}(T,x_0(T),u_0(T),p(T))$ exists and 
$$D_{G,3}H_{M}(T,x_0(T),u_0(T),p(T))=p(T)\circ D_{G,3}f(T,x_0(T),u_0(T)).$$
Consequently, by using (MP.M), we have $p(T)\circ D_{G,3}f(T,x_0(T),u_0(T))=0$.\\
That is why, thanks to (TC) and $\theta_j= 0$, we obtain that
\[
\left.
\begin{array}{l}   
\sum_{i\neq j }\theta_i D_Hg_i^0(x_0(T))\circ D_{G,3}f(T,x_0(T),u_0(T)) \\+
\sum_{\alpha=1}^{m} \lambda_\alpha D_Hg^\alpha(x_0(T))\circ D_{G,3}f(T,x_0(T),u_0(T)) \\+
\sum_{\beta=1}^{q} \mu_\beta D_Hh^\beta(x_0(T))\circ D_{G,3}f(T,x_0(T),u_0(T))=0.
\end{array}
\right\}
\]
Hence, thanks to (A{\sc f})$_j$, we have $((\theta_i)_{i\neq j}, (\lambda_\alpha)_{1\le \alpha\le m},(\mu_\beta)_{1\le \beta \le q})=0$.\\
Consequently, since $\theta_j=0$, we have $((\theta_i)_{1 \le i\le l},(\lambda_{\alpha})_{1 \leq \alpha \leq m}$, $(\mu_{\beta})_{1 \leq \beta \leq q})= 0$ this a contradiction with (NN).\\
We set $\forall i\in\{1,...,l\}$, $\theta_i'=\frac{\theta_i}{\theta_j}$, $\forall \alpha \in\{1,...,m\}$, $\lambda_\alpha':=\frac{\lambda_\alpha}{\theta_j}$, $\forall \beta \in\{1,...,q\}$, $\mu_\beta':=\frac{\mu_\beta}{\theta_j}$ and $p':=\frac{1}{\theta_j}p.$ \\
Since the set of $((\overline{\theta}_i)_{1 \le i\le l},(\overline{\lambda}_\alpha)_{1 \le \alpha \le m}, (\overline{\mu}_\beta)_{1\le \beta \le q},\overline{p})\in \R^{l+m+q}\times PC^1([0,T],E^*)$ verifying the conclusions of Theorem \ref{th22} is a cone, we have\\ $((\theta_i')_{1\le i\le l}, (\lambda_\alpha')_{1 \le \alpha \le m}, (\mu_\beta')_{1\le \beta \le q},p')$ that verifies the conclusions of Theorem \ref{th22} with $\theta'_j=1$.\\
Let $((\theta_i^1)_{1\le i\le l},(\lambda^1_\alpha)_{1\le \alpha\le m},(\mu^1_\beta)_{1\le \beta \le q},p^1)\in \R^{l+m+q}\times PC^1([0,T],E^*)$ and \\$((\theta_i^2)_{1\le i\le l},(\lambda^2_\alpha)_{1\le \alpha\le m},(\mu^2_\beta)_{1\le \beta \le q},p^2)\in \R^{l+m+q}\times PC^1([0,T],E^*)$ s.t. the conclusions of the Theorem \ref{th22} are verified with $\theta_j^1=\theta_j^2=1$.\\
Then, we have, for all $\ell \in \{1,2\}, p^\ell(T)\circ D_{G,3}f(T,x_0(T),u_0(T))=0$.
Therefore, we have $(p^1(T)-p^2(T))\circ D_{G,3}f(T,x_0(T),u_0(T))=0$. 
By using (TC), we have   
\[
\left.
\begin{array}{l} 
\sum_{i\neq j} (\theta^1_i-\theta^2_i) D_Hg_i^0(x_0(T))\circ D_{G,3}f(T,x_0(T),u_0(T)) \\+  
\sum_{\alpha=1}^{m} (\lambda^1_\alpha-\lambda^2_\alpha) D_Hg^\alpha(x_0(T))\circ D_{G,3}f(T,x_0(T),u_0(T)) \\+
\sum_{\beta=1}^{q} (\mu^1_\beta-\mu^2_\beta) D_Hh^\beta(x_0(T))\circ D_{G,3}f(T,x_0(T),u_0(T))=0.
\end{array}
\right\}
\]
Hence, by using (A{\sc f})$_j$, $\forall (i,\alpha,\beta)\in \{1,...,l\}\times \{1,...,m\}\times \{1,...,q\}$, $\theta^1_i=\theta^2_i$, $\lambda^1_\alpha=\lambda^2_\alpha$ and $\mu^1_\beta=\mu^2_\beta$.\\
Therefore, $p^1(T)=p^2(T)$; that is why (AE.M), we have : $p^1=p^2$.
\subsection{Proof of the Theorem \ref{th21}}
In \cite{BY}, by transforming the single-objective Bolza problem into a single-objective Mayer problem, the authors proof the Pontryagin Maximum Principle for the single-objective Bolza problem thanks to the Pontryagin Maximum Principle for the single-objective Mayer problem. For the proof of the Pontryagin Maximum Principle for the multiobjective Bolza problem, we will use the same reasoning. That is why, we introduce the following elements, for all $t\in [0,T]$, for all $X=(\sigma_1,...,\sigma_l,x)\in \R^l\times \Omega$, for all $u\in U$,\\
$F(t,X,u):=(f_1^0(t,x,u),...,f_l^0(t,x,u),f(t,x,u))$, $G_i^0(X):=\sigma_i+g_i^0(x)$ for all $i\in\{1,...,l\}$, $G^\alpha(X):=g^\alpha(x)$ for all $\alpha \in\{1,...,m\}$, $H^\beta(X):=h^\beta(x)$ for all $\beta\in\{1,...,q\}$.\\
Then, we can introduce the following multiobjective Mayer problem       
\[ (\mathcal{MB}) 
\left\{
\begin{array}{cl}
{\rm Maximize} & (G_1^0(X(T)),..., G^0_l(X(T)))\\
{\rm subject} \; \; {\rm to} & X \in PC^1([0,T], \R^l \times \Omega), u \in NPC^0([0,T], U)\\
\null & \underline{d}X(t) = F(t, X(t), u(t)), \; X(0) = (0, \xi_0)\\
\null & \forall \alpha \in \{ 1, ..., m\}, \; \; G^{\alpha}(X(T)) \geq 0 \\
\null & \forall \beta \in \{ 1,..., q\}, \; \; H^{\beta}(X(T)) = 0.
\end{array}
\right.
\]
\begin{lemma}\label{lem2}
For each $(x,u)\in Adm(\mathcal{B})$, by setting for all $t\in[0,T]$, for all $i\in\{1,...,l\}$, $\sigma_i(t):=\int_{0}^{t} f_i^0(s,x(s),u(s))ds$, we have $((\sigma_1,...,\sigma_l,x),u)\in Adm(\mathcal{MB})$.
\end{lemma}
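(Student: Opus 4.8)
The plan is to verify directly that the process $((\sigma_1,\dots,\sigma_l,x),u)$ satisfies every defining condition of $Adm(\mathcal{MB})$. Since $u\in NPC^0([0,T],U)$ is inherited unchanged from the admissibility of $(x,u)$ for $(\mathcal{B})$, the only nontrivial regularity point is to show that $X:=(\sigma_1,\dots,\sigma_l,x)\in PC^1([0,T],\R^l\times\Omega)$.

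First I would check that, for each $i\in\{1,\dots,l\}$, the integrand $t\mapsto f_i^0(t,x(t),u(t))$ belongs to $NPC^0([0,T],\R)$. This follows from the continuity of $f_i^0$ (assumption (A{\sc i}1)), the continuity of $x$ (as $x\in PC^1([0,T],\Omega)\subset C^0$), and $u\in NPC^0([0,T],U)$: composing the continuous map $f_i^0$ with the continuous $t\mapsto(t,x(t))$ and with the normalized piecewise continuous $u$ preserves piecewise continuity, the existence of one-sided limits at the corners of $u$, and right-continuity. Applying the integral--derivative correspondence recalled in Section 2, the primitive $\sigma_i(t)=\int_0^t f_i^0(s,x(s),u(s))\,ds$ then lies in $PC^1([0,T],\R)$ with $\underline{d}\sigma_i(t)=f_i^0(t,x(t),u(t))$. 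Combined with $x\in PC^1([0,T],\Omega)$, this yields $X\in PC^1([0,T],\R^l\times\Omega)$, the range condition being clear since $\R^l\times\Omega$ contains $X([0,T])$.

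Next I would compute the extended derivative coordinatewise. As $\underline{d}$ acts in each component, $\underline{d}X(t)=(\underline{d}\sigma_1(t),\dots,\underline{d}\sigma_l(t),\underline{d}x(t))$; substituting $\underline{d}\sigma_i(t)=f_i^0(t,x(t),u(t))$ and the admissibility relation $\underline{d}x(t)=f(t,x(t),u(t))$, and reading off the definition of $F$, one obtains
\[
\underline{d}X(t)=F(t,X(t),u(t))\quad\text{for all }t\in[0,T].
\]
The initial condition holds because $\sigma_i(0)=0$ for every $i$ and $x(0)=\xi_0$, so $X(0)=(0,\xi_0)$. The terminal constraints are immediate from the definitions $G^\alpha(X)=g^\alpha(x)$ and $H^\beta(X)=h^\beta(x)$ together with the admissibility of $(x,u)$ for $(\mathcal{B})$, giving $G^\alpha(X(T))=g^\alpha(x(T))\ge 0$ and $H^\beta(X(T))=h^\beta(x(T))=0$.

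The main obstacle is the single regularity step --- establishing $\sigma_i\in PC^1$ with the advertised extended derivative --- since this is the only place where the structure of $NPC^0$ and the Riemann integral formula of Section 2 are genuinely used; everything else is formal bookkeeping inherited from the admissibility of $(x,u)$.
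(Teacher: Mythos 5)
Your proof is correct and follows essentially the same route as the paper: establish that $t\mapsto f_i^0(t,x(t),u(t))$ is (normalized) piecewise continuous via (A{\sc i}1) and the regularity of $x$ and $u$, deduce $\sigma_i\in PC^1([0,T],\R)$ with $\underline{d}\sigma_i(t)=f_i^0(t,x(t),u(t))$, and then verify the dynamics, the initial condition and the terminal constraints coordinatewise. No gaps.
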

\begin{proof}
Let $(x,u)\in Adm(\mathcal{B})$. Since $u\in NPC^0([0,T],U)$ and $x\in PC^1([0,T],\Omega)$, by using (A{\sc i}1), we have, for each $i\in\{1,...,l\}$, $[t\mapsto f_i^0(t,x(t),u(t))]\in NPC^0_d([0,T],\R)$.\\
Consequently, for each $i\in\{1,...,l\}$, $\sigma_i\in PC^1([0,T],\R)$ and for all $t\in [0,T]$, $\underline{d}\sigma_i(t)=f_i^0(t,x(t),u(t)).$\\
Hence, $(\sigma_1,...,\sigma_l,x)\in PC^1([0,T],\R^l\times \Omega)$ and for all $t\in[0,T]$,
\[
\begin{array}{ll}
\underline{d}(\sigma_1,...,\sigma_l,x)(t)&=(\underline{d}\sigma_1(t),...,\underline{d}\sigma_l(t),\underline{d}x(t))\\
\null &= (f_1^0(t,x(t),u(t)),...,f^0_l(t,x(t),u(t)),f(t,x(t),u(t)))\\
\null &= F(t,(\sigma_1,...,\sigma_l,x)(t),u(t))
\end{array}
\]
Moreover, we have, for all $\alpha \in\{1,...,m\}$, $G^\alpha((\sigma_1,...,\sigma_l,x)(T))=g^\alpha(x(T)) \ge 0$ and $\forall \beta \in \{1,...,q\}$, $H^\beta((\sigma_1,...,\sigma_l,x)(T))=h^\beta(x(T))=0$.\
Therefore, since $(\sigma_1,...,\sigma_l,x)(0)=(\sigma_1(0),...,\sigma_l(0),x(0))=(0,\xi_0)$, we have $((\sigma_1,...,\sigma_l,x),u)\in Adm(\mathcal{MB})$.
\end{proof}
Hence, by setting for all $i\in \{1,...,l\}$, for all $t\in [0,T]$, \\$\sigma_i^0(t):=\int_{0}^{t} f_i^0(s,x_0(s),u_0(s))ds$, by using the Lemma \ref{lem2}, we have $(X_0,u_0):=((\sigma_1^0,...,\sigma_l^0,x_0),u_0)\in Adm(\mathcal{MB})$.
\begin{lemma}\label{lem3}
$(X_0,u_0)$ is a Pareto optimal solution of the multiobjective problem ({$\mathcal MB$}).
\end{lemma}
\begin{proof}
We proceed by contradiction, we assume that $(X_0,u_0)$ is not a Pareto optimal solution for ({$\mathcal MB$}) i.e. there exists $(X,u)=((\sigma_1,...,\sigma_l,x),u)\in PC^1([0,T],\R^l\times \Omega)\times NPC^0([0,T],U)$ admissible process for (${\mathcal MB}$) s.t. for all $i\in\{1,...,l\}$,\\ $G_i^0(X(T)) \ge G_i^0(X_0(T))$ and there exists $i_0\in\{1,...,l\}$, $G_{i_0}^0(X(T)) >G_{i_0}^0(X_0(T))$.\\
Since $X\in PC^1([0,T],\R^l\times \Omega)$ and $\forall t\in [0,T]$, $\underline{d}X(t):=F(t,X(t),u(t))$, we have $x\in PC^1([0,T],\Omega)$ and for all $i\in\{1,...,l\}$, $\sigma_i\in PC^1([0,T],\R)$ s.t. $$
\forall t\in[0,T],  \, \underline{d}x(t)=f(t,x(t),u(t)) \text{ and } \underline{d}\sigma_i(t)=f_i^0(t,x(t),u(t)).
$$
Moreover, we have also for all $\alpha\in\{1,...,m\}$, $g^\alpha(x(T)) \ge 0$ and for all $\beta \in\{1,...,q\}$, $h^\beta(x(T))=0$.\\
Consequently, we have $(x,u)\in Adm({\mathcal B})$.\\
Moreover, for all $t\in[0,T]$, we have $\sigma_i(t)=\int_{0}^{t} f_i^0(s,x(s),u(s))ds$.
Then, for all $i\in\{1,...,l\}$,
\[
\begin{array}{ll}
G_i^0(X(T))&= \int_{0}^{T} f_i^0(s,x(s),u(s))ds+g_i^0(x(T))\\    
\null &\ge  G_i^0(X_0(T))=\int_{0}^{T} f_i^0(s,x_0(s),u_0(s))ds+g_i^0(x_0(T))
\end{array}
\]
and there exists $i_0\in \{1,...,l\}$,
\[
\begin{array}{ll}
G_{i_0}^0(X(T))&= \int_{0}^{T} f_{i_0}^0(s,x(s),u(s))ds+g_{i_0}^0(x(T))\\    
\null &> G_{i_0}^0(X_0(T))=\int_{0}^{T} f_{i_0}^0(s,x_0(s),u_0(s))ds+g_{i_0}^0(x_0(T)).
\end{array}
\]
This a contradiction with $(x_0,u_0)$ is a Pareto optimal solution.

\end{proof}
\begin{lemma}\label{lem3bis} The assumptions of Theorem \ref{th21} for the multiobjective Mayer problem (${\mathcal MB}$) with the Pareto optimal solution $(X_0,u_0)$ are verified.
\end{lemma}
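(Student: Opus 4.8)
The plan is to verify, for the transformed Mayer problem $(\mathcal{MB})$ at the admissible process $(X_0,u_0)$, the hypotheses required by the Mayer Pontryagin principle (Theorem \ref{th22}), namely (A{\sc v}1) and (A{\sc v}2) for the vector field $F$ together with (A{\sc t}1), (A{\sc t}2), (A{\sc t}3) for the terminal data $G_i^0$, $G^\alpha$, $H^\beta$ (there is no running cost in $(\mathcal{MB})$, so the integrand hypotheses are vacuous). The enlarged state space $\R^l\times E$ is a real Banach space, and the structural observation that makes every verification mechanical is that $F$, $G^\alpha$ and $H^\beta$ do not involve the $\sigma$-coordinates, while $G_i^0$ involves them only through the bounded linear functional $\ell_i:(\sigma,x)\mapsto\sigma_i$. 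I would derive each hypothesis for $(\mathcal{MB})$ from the matching hypothesis on the original data of $(\mathcal{B})$.

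First I would establish (A{\sc v}1). Continuity of $F=(f_1^0,\dots,f_l^0,f)$ is immediate from (A{\sc i}1) and (A{\sc v}1), since a map into the finite product $\R^l\times E$ is continuous precisely when all its components are. The partial Gâteaux differential $D_{G,2}F$ with respect to the state annihilates the $\sigma$-directions and equals $(D_{G,2}f_1^0,\dots,D_{G,2}f_l^0,D_{G,2}f)$ on the $x$-direction, so it exists by (A{\sc i}1) and (A{\sc v}1); the same block description produces the partial Fréchet differential $D_{F,2}F(t,X_0(t),\zeta)$, whose $x$-block is assembled from the $D_{F,2}f_i^0(t,x_0(t),\zeta)$ and $D_{F,2}f(t,x_0(t),\zeta)$ and whose $\sigma$-block is zero. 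For (A{\sc v}2), I would project a compact $K\subset\R^l\times\Omega$ onto a compact $K'\subset\Omega$; since $D_{G,2}F$ does not involve $\sigma$, its supremum over $[0,T]\times K\times M$ equals that over $[0,T]\times K'\times M$, and the block norms are bounded there by (A{\sc i}2) and (A{\sc v}2).

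Next I would check the terminal hypotheses using the chain and sum rules for Hadamard differentials together with the continuous linear projection $\pi_x:\R^l\times E\to E$. Since $G^\alpha=g^\alpha\circ\pi_x$ and $H^\beta=h^\beta\circ\pi_x$, and a bounded linear map is Hadamard differentiable everywhere with differential itself, (A{\sc t}2) and (A{\sc t}3) give that $G^\alpha$ and $H^\beta$ are Hadamard differentiable at $X_0(T)$; the continuity of $H^\beta$ on a neighborhood of $X_0(T)$ demanded in (A{\sc t}3) follows from that of $h^\beta$ near $x_0(T)=\pi_x(X_0(T))$ precomposed with $\pi_x$. Finally $G_i^0=\ell_i+g_i^0\circ\pi_x$ is a sum of an everywhere Hadamard differentiable linear functional and a map that is Hadamard differentiable at $X_0(T)$ by (A{\sc t}1), hence is itself Hadamard differentiable at $X_0(T)$, giving (A{\sc t}1) for $(\mathcal{MB})$.

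I expect the one point that needs genuine care to be the continuity clause in (A{\sc v}1): one must argue that $[(t,\zeta)\mapsto D_{F,2}F(t,X_0(t),\zeta)]$ is continuous into $\mathcal{L}(\R^l\times E,\R^l\times E)$. Here I would use that on the finite product $\R^l\times E$ the operator norm of a linear map is equivalent to the maximum of the norms of its coordinate blocks, so that the separately established continuity of $(t,\zeta)\mapsto D_{F,2}f_i^0(t,x_0(t),\zeta)\in E^*$ from (A{\sc i}1) and of $(t,\zeta)\mapsto D_{F,2}f(t,x_0(t),\zeta)\in\mathcal{L}(E,E)$ from (A{\sc v}1) transfers to joint continuity of the assembled operator-valued map. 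Everything else is a routine transcription of the behaviour of the three differentials under finite products, sums, and composition with bounded linear maps.
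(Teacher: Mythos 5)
Your proposal is correct and follows essentially the same route as the paper: the paper likewise writes $G_i^0=w_i^1+g_i^0\circ w^2$, $G^\alpha=g^\alpha\circ w^2$, $H^\beta=h^\beta\circ w^2$ and $F=(f_1^0\circ\chi,\dots,f_l^0\circ\chi,f\circ\chi)$ with the projection $w^2(\sigma,\xi)=\xi$, transfers the Hadamard differentials by the chain rule, and handles (A{\sc v}2) by projecting the compact $K\subset\R^l\times\Omega$ onto a compact subset of $\Omega$ and bounding the operator norm of the block map by the sum of the block norms. Your added remark on why $[(t,\zeta)\mapsto D_{F,2}F(t,X_0(t),\zeta)]$ is continuous (equivalence of the operator norm with the maximum of the coordinate-block norms on a finite product) is a correct elaboration of a step the paper states without detail.
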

\begin{proof}
We consider the linear functions $i\in\{1,...,l\}$, $w_i^1:\R^l \times E \rightarrow \R$ defined by, $w_i^1(\sigma_1,...,\sigma_l,\xi)=\sigma_i$ and $w^2:\R^l \times E \rightarrow E$, defined by, $w^2(\sigma_1,...,\sigma_l,\xi)=\xi$.\\
For all $i\in\{1,...,l\}$, since $G_i^0={w_i^1}_{\R^l \times \Omega} +g_i^0 \circ w^2_{|\R^l\times \Omega}$, by using the property of the chain rule of the Hadamard differentiable function, see \cite{F} p.267, and (A{\sc t}1), we have 
\begin{equation}\label{eq1}
D_HG^0((\sigma_1^0,...,\sigma_l^0,x_0)(T))=w_i^1+D_Hg_i^0(x_0(T))\circ w^2.
\end{equation}
Therefore, (A{\sc t}1) is verified for (${\mathcal MB}$) with the Pareto optimal solution $(X_0,u_0)$.\\
Next, for all $\alpha\in\{1,...,m\}$, since $G^\alpha=g^\alpha \circ w^2_{|\R^l\times \Omega}$, by using the property of the chain rule of the Hadamard differentiable function, see \cite{F} p.267, and (A{\sc t}2), we have 
\begin{equation}\label{eq2}
D_HG^\alpha((\sigma_1^0,...,\sigma_l^0,x_0)(T))=D_Hg^\alpha(x_0(T))\circ w^2.
\end{equation}
Hence, (A{\sc t}2) is verified for (${\mathcal MB}$) with the Pareto optimal solution $(X_0,u_0)$.\\
Moreover, for all $\beta\in\{1,...,q\}$, since $H^\beta=h^\beta \circ w^2_{|\R^l\times \Omega}$, by using the property of chain rule of the Hadamard differentiable function, see \cite{F} p.267, and (A{\sc t}3), we have 
\begin{equation}\label{eq3}
D_HH^\beta((\sigma_1^0,...,\sigma_l^0,x_0)(T))=D_Hh^\beta(x_0(T))\circ w^2.
\end{equation}
Since $h^\beta$ is continuous on a neighborhood $V_0^\beta$ of $x_0(T)$ in $\Omega$ and $w^2_{|\R^l\times \Omega}\in C^0(\R^l\times \Omega,\Omega)$, there exists $W_0^\beta$ of $X_0(T)$ in $\R^l\times \Omega$ s.t. ${w_2}_{|W_\beta^0}\in C^0(W_0^\beta,V_0^\beta)$. Hence, we have $H^\beta_{|W_0^\beta}\in C^0(W_0^\beta,\R)$.
Consequently, (A{\sc t}3) is verified for (${\mathcal MB}$) with the Pareto optimal solution $(X_0,u_0)$.\\
We consider the continuous function $\chi:[0,T]\times \R^l \times \Omega \times U \rightarrow [0,T]\times \R^l \times \Omega$ defined by $\chi(t,\sigma,\xi,\zeta)=(t,\xi,\zeta)$.\\
We remark that $F:=(f^0_1 \circ \chi,...,f^0_l\circ \chi,f\circ \chi)$.\\
By using (A{\sc i}1) and (A{\sc v}1), we have, for all $i\in\{1,...,l\}$, $f^0_1 \circ \chi\in C^0( [0,T]\times \R^l \times \Omega \times U, \R)$ and $f \circ \chi\in C^0( [0,T]\times \R^l \times \Omega \times U, E)$.\\
Consequently, we have $F\in C^0( [0,T]\times \R^l \times \Omega \times U, \R^l\times E)$.\\
By using (A{\sc i}1) and (A{\sc v}1), we have, for all $(t,\sigma,\xi,\zeta)\in [0,T]\times \R^l \times \Omega \times U$, $D_{G,2}F(t,(\sigma,\xi),\zeta)$ exists and 
\begin{equation}\label{eq4}
\left.
\begin{array}{ll}
D_{G,2} F(t,(\sigma,\xi),\zeta)\\
=(D_{G,2}f_1^0(t,\xi,\zeta)\circ w^2,..., D_{G,2}f_l^0(t,\xi,\zeta)\circ w^2, D_{G,2}f(t,\xi,\zeta)\circ w^2).
\end{array}
\right\}
\end{equation}
For all $t\in[0,T]$ and $\zeta\in U$, since $F(t,\cdot,\zeta):=(f_1^0(t,\cdot,\zeta)\circ w_{|\R^l\times \Omega}^2,...,f_l^0(t,\cdot,\zeta)\circ w_{|\R^l\times \Omega}^2,f(t,\cdot,\zeta)\circ w_{|\R^l\times \Omega}^2 )$, by using (A{\sc i}1) and (A{\sc v}1), we have $D_{F,2}F(t,X_0(t),\zeta)$ exists and 
$$
\left.
\begin{array}{ll}
D_{F,2}F(t,X_0(t),\zeta)\\
=(D_{F,2}f_1^0(t,x_0(t),\zeta)\circ w^2,..., D_{F,2}f_l^0(t,x_0(t),\zeta)\circ w^2, D_{F,2}f(t,x_0(t),\zeta)\circ w^2).
\end{array}
\right\}$$
Consequently, by using (A{\sc i}1) and (A{\sc v}1), we have $$[(t,\zeta) \mapsto D_{F,2}F(t,X_0(t),\zeta)]\in C^0([0,T] \times U,\mathcal{L}(\R^l\times E,\R^l\times E).$$ 
Therefore, (A{\sc v}1) is verified for (${\mathcal MB}$) with the Pareto optimal solution $(X_0,u_0)$.\\
Let $K$ be a non-empty compact s.t. $K \subset \R^l\times \Omega$ and $M$ be a non-empty compact s.t. $M\subset U$.\\
We consider the linear continuous function $\varpi :\R^l\times \Omega \rightarrow \Omega$, defined by, for all $(\sigma,\xi)\in \R^l\times \Omega$, $\varpi(\sigma,\xi):=\xi$.\\
Since $K$ is a non-empty compact, $\tilde{K}=\varpi(K)$ is a non empty compact s.t. $\tilde{K} \subset \Omega$.\\
Consequently, by using (A{\sc i}2) and (A{\sc v}2), we have $$
\text{for all } i\in \{1,...,l\} \sup_{(t,\xi,\zeta)\in [0,T]\times \tilde{K}\times M} \|D_{G,2}f_i^0(t,\xi,\zeta)\|_\mathcal{L}<+\infty,$$
and $$\sup_{(t,\xi,\zeta)\in [0,T]\times \tilde{K}\times M} \|D_{G,2}f(t,\xi,\zeta)\|_\mathcal{L}<+\infty.$$  
Therefore, by using (\ref{eq4}), we have $$
\begin{array}{l}
\underset{(t,(\sigma,\xi),\zeta)\in [0,T]\times K\times U}{\sup} \|D_{G,2} F(t,(\sigma,\xi),\zeta)\|_\mathcal{L} \\
\le \sum_{i=1}^{l}  \underset{(t,\xi,\zeta)\in [0,T]\times \tilde{K}\times M}{\sup} \|D_{G,2}f_i^0(t,\xi,\zeta)\|_\mathcal{L}+\underset{(t,\xi,\zeta)\in [0,T]\times \tilde{K}\times M}{\sup} \|D_{G,2}f(t,\xi,\zeta)\|_\mathcal{L}\\
<+\infty.
\end{array}
$$
Hence (A{\sc v}2) is verified for (${\mathcal MB}$) with the Pareto optimal solution $(X_0,u_0)$.
\end{proof} 
By using the Lemma \ref{lem3bis}, by applying the Theorem \ref{th21}, we obtain that, there exists $(\theta_i)_{1 \le i \le l}\in \R^l$, $(\lambda_{\alpha})_{1 \leq \alpha \leq m} \in \R^{m}$, $(\mu_{\beta})_{1 \leq \beta \leq q} \in \R^q$ and an adjoint function $P \in PC^1([0,T], (\R^l\times E)^*)$ which satisfy the following conditions.
\begin{itemize}
\item[(i)] $((\theta_i)_{1 \le i\le l},(\lambda_{\alpha})_{1 \leq \alpha \leq m}$, $(\mu_{\beta})_{1 \leq \beta \leq q})\neq 0$
\item[(ii)] For all $i\in \{1,...,l\}$, $\theta_i\ge 0$ and for all $\alpha \in \{1,...,m \}$, $\lambda_{\alpha} \geq 0$.
\item[(iii)] For all $\alpha \in \{1,...,m \}$, $\lambda_{\alpha} G^{\alpha}(X_0(T)) = 0$.
\item[(iv)] $\sum_{i=1}^{l} \theta_iD_HG_i^0(X_0(T))+\sum_{\alpha = 1}^{m} \lambda_{\alpha} D_HG^{\alpha}(X_0(T)) + \sum_{\beta = 1}^q \mu_{\beta} D_HH^{\beta}(X_0(T)) = P(T)$.
\item[(v)] $\underline{d}P(t) = - D_{F,2}H_{MB}(t, X_0(t),u_0(t), P(t))$ for all $t \in [0,T]$.
\item[(vi)] For all $t \in [0,T]$, for all $\zeta \in U$, \\
$H_{MB}(t, X_0(t), u_0(t), P(t)) \geq H_{MB}(t, X_0(t), \zeta, P(t))$.
\item[(vii)] $\bar{H}_{MB} := [ t \mapsto H_{MB}(t, X_0(t), u_0(t), P(t))] \in C^0([0,T], \R)$.
\end{itemize} 
Where the function $H_{MB}:[0,T]\times (\R^l\times \Omega) \times U\times (\R^l\times E)^* \rightarrow \R$ is the Hamiltonian of the problem (${\mathcal MB}$), defined by $H_{MB}(t, (\sigma_1,...,\sigma_l,x),u,P)=P\cdot F(t,(\sigma_1,...,\sigma_l,x),u).$\\
We consider the linear continuous function $\psi  :(\R^l\times E)^*\rightarrow E ^*$ defined by, for all ${\mathfrak l}\in(\R^l\times E)^*$, for all $\xi\in E$, $\psi({\mathfrak l})\cdot={\mathfrak l}\cdot(0,\xi)$.\\
We set $p=\psi\circ P$. Since $\psi\in\mathcal{L}((\R^l\times E)^*,E^*)$, we have $p\in PC^1([0,T],E^*)$ and for all $t\in [0,T]$, $\underline{d}p(t)=\psi\cdot\underline{d}P(t).$\\
Therefore, by using (i), (ii) and (iii), we have respectively (NN), (Si) and (S${\ell}$).\\
By using (iv), we have , for each $i\in\{1,...,l\}$ $P(T)\cdot (e_i,0)=\theta_i$ where $(e_i)_{1\le i\le l}$ is the canonical basis of $\R^l$ and $\forall \xi\in E$, 
$$
\begin{array}{l}
p(T)\cdot \xi=P(T)\cdot(0,\xi)\\
=(\sum_{i=1}^{l} \theta_iD_HG_i^0(X_0(T))+\sum_{\alpha = 1}^{m} \lambda_{\alpha} D_HG^{\alpha}(X_0(T)) \\
+ \sum_{\beta = 1}^q \mu_{\beta} D_HH^{\beta}(X_0(T)))\cdot(0,\xi)\\
= (\sum_{i=1}^{l} \theta_iD_Hg_i^0(x_0(T)) +\sum_{\alpha = 1}^{m} \lambda_{\alpha} D_Hg^{\alpha}(x_0(T))  + \sum_{\beta = 1}^q \mu_{\beta} D_Hh^{\beta}(x_0(T)))\cdot \xi.
\end{array}
$$
Hence (TC) is verified.\\
For all $i\in\{1,...,l\}$, we consider the linear continuous function $\varphi_i :(\R^l\times E)^* \rightarrow \R$ defined by, $\forall {\mathfrak l}\in (\R^l\times E)^*,$ $\varphi_i({\mathfrak l})={\mathfrak l}\cdot(e_i,0)$. We set $p_0^i=\varphi_i \circ P$.\\
Since $\varphi_i\in \mathcal{L}((\R^l\times E)^*,\R)$ we have $p^0_i\in PC^1([0,T],\R)$ and 
$$\underline{d}p_0^i(t)=\varphi_i \cdot \underline{d}P(t)=\underline{d}P(t)\cdot(e_i,0)=0.$$
Moreover, since $p_0^i(T)=\theta_i$, we have $\forall t\in[0,T]$, $\underline{d}p_i^0(t)=\theta_i$.\\
Besides, $\forall \xi\in E$, $\forall t\in[0,T]$,
$$ 
\begin{array}{ll}
\underline{d}p(t)\cdot \xi&=\underline{d}P(t)\cdot(0,\xi) \\
\null & =-P(t)\cdot D_{F,2}F(t,X_0(t),u_0(t))\cdot (0,\xi)\\
\null &= -\sum_{i=1}^{l} p_0^i(t)D_{F,2}f_i^0(t,x_0(t),u_0(t))\cdot \xi-p(t)\cdot D_{F,2}f(t,x_0(t),u_0(t))\cdot \xi\\
\null &= -\sum_{i=1}^{l} \theta_iD_{F,2}f_i^0(t,x_0(t),u_0(t))\cdot \xi-p(t)\cdot D_{F,2}f(t,x_0(t),u_0(t))\cdot \xi\\
\null &=-D_{F,2}H_{B}(t, x_0(t),u_0(t), p(t), (\theta_i)_{1 \le i \le l})\cdot \xi
\end{array}
$$
Consequently (AE.B) is verified.\\
Furthermore, we have, $\forall (t,\zeta)\in [0,T]\times U$, 
$$ 
H_{MB}(t, X_0(t),\zeta, P(t))=H_{B}(t, x_0(t),\zeta, p(t), (\theta_i)_{1 \le i \le l}).$$
Consequently, by using (vi) and (vii), we have proved (MP.B) and (CH.B).\\
Hence the proof of the Theorem \ref{th21} is complete.
%%%%%%%%%%%%%%%%%%%%%%%%%%%%%%%%%%%%%%%%%%%%%%%%%%%%%%%%%%%%%%%%%%%%%%%%%%%%%%%%%%%%%%%%%%%%%%%%%%%%%%%%%%%%%%%%%%
\subsection{Proof of Corollary \ref{cor12}}
We proceed by contradiction by assuming that there exists $t_1 \in [0,T]$ such $((\theta_i)_{1\le i\le l}, p(t_1)) = (0,0)$.\\
Since (AE.B) becomes an homogeneous linear equation, and by using the uniqueness of the Cauchy problem ((AE.B), $p(t_1) = 0$), we obtain that $p$ is equal to zero on $[0,T]$, in particular we have $p(T) = 0$.\\
Hence, by using (TC), (Si), (S${\ell}$), (QC$_1$), we obtain that $(\forall \alpha \in\{1,...,m\}, \lambda_{\alpha} = 0)$ and $(\forall \beta \in\{1,...,q\},\, \mu_{\beta} = 0)$.\\
Therefore, since $(\theta_i)_{1\le i\le l} = 0$, we have $((\theta_i)_{1 \le i\le l},(\lambda_{\alpha})_{1 \leq \alpha \leq m}$, $(\mu_{\beta})_{1 \leq \beta \leq q})= 0$ which is a contradiction with (NN).
\subsection{Proof of Corollary \ref{cor13}}
%%%%%%%%%%%%%%%%%%%%%%%%%%%%%%%%%%%%%%%%%%%%%%%%%%%%%%%%%%%%%%%%%%%%%%%%%%%%%%
We proceed by contradiction, we assume that $(\theta_i)_{1\le i\le l} = 0$.\\
Since $D_{G,3}f(\hat{t},x_0(\hat{t}), u_0(\hat{t}))$ exists, $D_{G,3}H_{B}(\hat{t},x_0(\hat{t}),u_0(\hat{t}),p(\hat{t}),0)$ exists and 
$$D_{G,3}H_{B}(\hat{t},x_0(\hat{t}),u_0(\hat{t}),p(\hat{t}),0)=p(\hat{t})\circ D_{G,3}f(\hat{t},x_0(\hat{t}),u_0(\hat{t})).$$
Therefore, by using (MP.B), we have $p(\hat{t})\circ D_{G,3}f(\hat{t},x_0(\hat{t}),u_0(\hat{t}))=0$.\\
Since $D_{G,3}f(\hat{t},x_0(\hat{t}),u_0(\hat{t}))$ is surjective, we have $p(\hat{t})=0$.\\
Therefore, we have $((\theta_i)_{1\le i\le l},p(\hat{t}))=0$.\\
This is a contradiction with the Corollary \ref{cor12}, therefore $(\theta_i)_{1\le i\le l}\neq 0$.\\
\subsection{Proof the Corolloray \ref{cor14}} We proceed by contradiction, we assume that $(\theta_i)_{1\le i\le l} = 0$.\\
Since $D_{G,3}f(T,x_0(T), u_0(T))$ exists, $D_{G,3}H_{B}(T,x_0(T),u_0(T),p(T),0)$ exists and 
$$D_{G,3}H_{M}(T,x_0(T),u_0(T),p(T),0)=p(T)\circ D_{G,3}f(T,x_0(T),u_0(T)).$$
Consequently, by using (MP.B), we have $p(T)\circ D_{G,3}f(T,x_0(T),u_0(T))=0$.\\
That is why, thanks to (TC) and $(\theta_i)_{1\le i\le l} = 0$, we obtain that
\[
\left.
\begin{array}{l}   
\sum_{\alpha=1}^{m} \lambda_\alpha D_Hg^\alpha(x_0(T))\circ D_{G,3}f(T,x_0(T),u_0(T)) \\+
\sum_{\beta=1}^{q} \mu_\beta D_Hh^\beta(x_0(T))\circ D_{G,3}f(T,x_0(T),u_0(T))=0.
\end{array}
\right\}
\]
Hence, thanks to (A{\sc lib}), we have $((\lambda_\alpha)_{1\le \alpha\le m},(\mu_\beta)_{1\le \beta \le q})=0$.\\
Consequently, since $(\theta_i)_{1\le i\le l} = 0$, we have $((\theta_i)_{1 \le i\le l},(\lambda_{\alpha})_{1 \leq \alpha \leq m}$, $(\mu_{\beta})_{1 \leq \beta \leq q})= 0$ this a contradiction with (NN).
\subsection{Proof the Corolloray \ref{cor15}}Let $j\in\{1,...,l\}$. We assume that (A{\sc f})$_j$.\\
We proceed by contradiction, we assume that $\theta_j= 0$.\\
Since $D_{G,3}f(T,x_0(T), u_0(T))$ exists and for all $i \neq j$, $D_{G,3}f^0_i(T,x_0(T),u_0(T))$ exists,\\ $D_{G,3}H_{B}(T,x_0(T),u_0(T),p(T),(\theta_i)_{1\le i\le l})$ exists and 
$$
\left.
\begin{array}{ll}
D_{G,3}H_{B}(T,x_0(T),u_0(T),p(T),(\theta_i)_{1\le i\le l})\\
=p(T)\circ D_{G,3}f(T,x_0(T),u_0(T))+\sum_{i\neq j}\theta_iD_{G,3}f^0_i(T,x_0(T),u_0(T)).
\end{array}
\right\}
$$
Consequently, by using (MP.B), we have $$p(T)\circ D_{G,3}f(T,x_0(T),u_0(T))+\sum_{i\neq j}\theta_iD_{G,3}f^0_i(T,x_0(T),u_0(T))=0.$$
That is why, thanks to (TC) and $\theta_j= 0$, we obtain that
\[
\left.
\begin{array}{l}   
\sum_{i\neq j }\theta_i (D_Hg_i^0(x_0(T))\circ D_{G,3}f(T,x_0(T),u_0(T))+D_{G,3}f^0_i(T,x_0(T),u_0(T))) \\+
\sum_{\alpha=1}^{m} \lambda_\alpha D_Hg^\alpha(x_0(T))\circ D_{G,3}f(T,x_0(T),u_0(T)) \\+
\sum_{\beta=1}^{q} \mu_\beta D_Hh^\beta(x_0(T))\circ D_{G,3}f(T,x_0(T),u_0(T))=0.
\end{array}
\right\}
\]
Hence, thanks to (A{\sc f})$^0_j$, we have $((\theta_i)_{i\neq j}, (\lambda_\alpha)_{1\le \alpha\le m},(\mu_\beta)_{1\le \beta \le q})=0$.\\
Consequently, since $\theta_j=0$, we have $((\theta_i)_{1 \le i\le l},(\lambda_{\alpha})_{1 \leq \alpha \leq m}$, $(\mu_{\beta})_{1 \leq \beta \leq q})= 0$ this a contradiction with (NN).\\
We set $\forall i\in\{1,...,l\}$, $\theta_i'=\frac{\theta_i}{\theta_j}$, $\forall \alpha \in\{1,...,m\}$, $\lambda_\alpha':=\frac{\lambda_\alpha}{\theta_j}$, $\forall \beta \in\{1,...,q\}$, $\mu_\beta':=\frac{\mu_\beta}{\theta_j}$ and $p':=\frac{1}{\theta_j}p.$ \\
Since the set of $((\overline{\theta}_i)_{1 \le i\le l},(\overline{\lambda}_\alpha)_{1 \le \alpha \le m}, (\overline{\mu}_\beta)_{1\le \beta \le q},\overline{p})\in \R^{l+m+q}\times PC^1([0,T],E^*)$ verifying the conclusions of Theorem \ref{th21} is a cone, we have\\ $((\theta_i')_{1\le i\le l}, (\lambda_\alpha')_{1 \le \alpha \le m}, (\mu_\beta')_{1\le \beta \le q},p')$ that verifies the conclusions of Theorem \ref{th21} with $\theta'_j=1$.\\
Now, we assume that $D_{G,3}f^0_j(T,x_0(T),u_0(T))$ exists.\\ 
Let $((\theta_i^1)_{1\le i\le l},(\lambda^1_\alpha)_{1\le \alpha\le m},(\mu^1_\beta)_{1\le \beta \le q},p^1)\in \R^{l+m+q}\times PC^1([0,T],E^*)$ and \\$((\theta_i^2)_{1\le i\le l},(\lambda^2_\alpha)_{1\le \alpha\le m},(\mu^2_\beta)_{1\le \beta \le q},p^2)\in \R^{l+m+q}\times PC^1([0,T],E^*)$ s.t. the conclusions of the Theorem \ref{th21} are verified with $\theta_j^1=\theta_j^2=1$.\\
Then, we have, for all $\ell \in \{1,2\}$,
$$
\left.
\begin{array}{l}
p^\ell(T)\circ D_{G,3}f(T,x_0(T),u_0(T))+D_{G,3}f_j^0(T,x_0(T),u_0(T))+ \\
\sum_{i\neq j}\theta^\ell_iD_{G,3}f_i^0(T,x_0(T),u_0(T)) =0.
\end{array}
\right\}
$$
Therefore, we have 
$$(p^1(T)-p^2(T))\circ D_{G,3}f(T,x_0(T),u_0(T))+\sum_{i\neq j}(\theta_i^1-\theta_i^2)D_{G,3}f_i^0(T,x_0(T),u_0(T))=0.$$
By using (TC), we have   
\[
\left.
\begin{array}{l} 
\sum_{i\neq j} (\theta^1_i-\theta^2_i) (D_Hg_i^0(x_0(T))\circ D_{G,3}f(T,x_0(T),u_0(T))+\\D_{G,3}f_i^0(T,x_0(T),u_0(T))) \\+  
\sum_{\alpha=1}^{m} (\lambda^1_\alpha-\lambda^2_\alpha) D_Hg^\alpha(x_0(T))\circ D_{G,3}f(T,x_0(T),u_0(T)) \\+
\sum_{\beta=1}^{q} (\mu^1_\beta-\mu^2_\beta) D_Hh^\beta(x_0(T))\circ D_{G,3}f(T,x_0(T),u_0(T))=0.
\end{array}
\right\}
\]
Hence, by using (A{\sc f})$^0_j$, $\forall (i,\alpha,\beta)\in \{1,...,l\}\times \{1,...,m\}\times \{1,...,q\}$, $\theta^1_i=\theta^2_i$, $\lambda^1_\alpha=\lambda^2_\alpha$ and $\mu^1_\beta=\mu^2_\beta$.\\
Therefore, $p^1(T)=p^2(T)$; that is why, by using (AE.B), we have : $p^1=p^2$.
%%%%%%%%%%%%%%%%%%%%%%%%%%%%%%%%%%%%%%%%%%%%%%%%%%%%%%%%%%%%%%%%%%%%%%%%%%%%%%%%%%%%%%%%%%%%%%%%%%%%%%%%%%%%%%%%%%%%%%%%%%%%%%%%%%%%%%%%%%%%%%%%%%%%%%%%%%%%%%%%%%%%%%%%%%%%%%%%%%%%%%%%%%%%%%%%%%%%%%%%%%%%%%%%%%%%%%%%%%%%%%%%%%%%%%%%%%%%%%%%%%%%%%%%%%%%%%%%%%%%%%%%%%%%%%%%%%%%%%%%
\section{Proof of the sufficient conditions} 
\subsection{Proof of the Theorem \ref{thsm1}}
Let $(x,u)\in Adm({\mathcal M})$. By using (TC), we have 
\begin{equation}
\left.
\begin{array}{l}
\sum_{i=1}^{l} \theta_iD_Hg^0_i(\overline{x}(T))\cdot(x(T)-\overline{x}(T))\\
=p(T)\cdot (x(T)-\overline{x}(T))-\sum_{\alpha=1}^{m} \lambda_\alpha D_Hg^\alpha(\overline{x}(T)) \cdot (x(T)-\overline{x}(T))\\
-\sum_{\beta=1}^{q} \mu_\beta D_Hh^\beta(\overline{x}(T))\cdot(x(T)-\overline{x}(T))
\end{array}
\right\}
\end{equation}
Moreover, by using (Si) and (S${\ell}$), we have for each $\alpha \in\{1,...,m\}$, $\lambda_\alpha g^\alpha(\overline{x}(T)) \le \lambda_\alpha g^\alpha(x(T))$.\\
Consequently, by using (S{\sc t}2), we have for all $\alpha \in\{1,...,m\}$, $\lambda_\alpha D_Hg^\alpha(\overline{x}(T)) \cdot(x(T)-\overline{x}(T))\ge 0$.\\
Moreover, thanks to (S{\sc t}3), we have for all $\beta\in\{1,...,q\}$, $\mu_\beta D_Hh^\beta(\overline{x}(T))\cdot(x(T)-\overline{x}(T))=0$.\\
Hence
$$
\begin{array}{l}
\sum_{i=1}^{l}\theta_i D_Hg^0_i(\overline{x}(T))\cdot(x(T)-\overline{x}(T))\\
\le p(T)\cdot (x(T)-\overline{x}(T)) \\
=p(0)\cdot (x(0)-\overline{x}(0))+\int_{0}^{T} \underline{d}(p(t)\cdot(x(t)-\overline{x}(t)))dt\\
=\int_{0}^{T} \underline{d}p(t)\cdot (x(t)-\overline{x}(t)) dt + \int_{0}^{T} p(t)\cdot \underline{d}(x(t)-\overline{x}(t)) dt\\
\le \int_{0}^{T} (H_M(t,\overline{x}(t),\overline{u}(t),p(t))-H_M(t,x(t),u(t),p(t)))dt+\\
\int_{0}^{T} (H_M(t,x(t),u(t),p(t))-H_M(t,\overline{x}(t),\overline{u}(t),p(t)))dt\\
=0 
\end{array}
$$
where we have used (S{\sc hm}1).

Therefore, thanks to (S{\sc t}1-bis), we have $\sum_{i=1}^{l} \theta_i g^0_i(x(T)) \le \sum_{i=1}^{l} \theta_i g^0_i(\overline{x}(T))$.\\
Hence, $(\overline{x},\overline{u})$ is a solution of the following single-objective optimization problem :
\[
(\mathcal{P}_\theta)
\left\{
\begin{array}{cl}
{\rm Maximize} & \sum_{i=1}^{l} \theta_i J_i(x,u) \\
{\rm subject \;  to} & (x,u)\in Adm({\mathcal M}).
\end{array}
\right.
\]
Now, we assume that $(\theta_i)_{1\le i\le l} \neq 0$. \\
We want to prove that $(\overline{x},\overline{u})$ is a weak Pareto optimal solution. We proceed by contradiction, we assume that $(\overline{x},\overline{u})$ is not a weak Pareto  optimal solution i.e. there exists $(x,u)\in Adm({\mathcal M})$ such that for all $i\in \{1,...,l\}$, $J_i(x,u) > J_i(\overline{x},\overline{u})$.\\
Consequently, we have $\sum_{i=1}^{l} \theta_i J_i(x,u) > \sum_{i=1}^{l} \theta_i J_i(\overline{x},\overline{u})$. But this contradicts the optimality of $(\overline{x},\overline{u})$ for the problem $({\mathcal P}_\theta)$. \\
Next, we assume that for all $i\in\{1,...,l\},$ $\theta_i \neq 0$.\\
We want to prove that $(\overline{x},\overline{u})$ is a Pareto optimal solution. We proceed by contradiction, we assume that $(\overline{x},\overline{u})$ is not a Pareto optimal solution i.e. there exists $(x,u)\in Adm({\mathcal M})$ such that for all $i\in \{1,...,l\}$, $J_i(x,u)\ge J_i(\overline{x},\overline{u})$ and for some $i_0\in\{1,...,l\}$, $J_{i_0}(x,u)>J_{i_0}(\overline{x},\overline{u})$.
Hence, we obtain that $\sum_{i=1}^{l} \theta_i J_i(x,u) > \sum_{i=1}^{l} \theta_i J_i(\overline{x},\overline{u})$ which contradicts the optimality of $(\overline{x},\overline{u})$.

\subsection{Proof of the Theorem \ref{thsm2}}
Notice that (S{\sc hm}2) implies (S{\sc hm}1).\\
Indeed, let $(x,u)\in Adm({\mathcal M})$.\\
For all $t\in [0,T]$, for all $\varepsilon >0$ small enough, we have $\overline{x}(t)+\varepsilon(\overline{x}(t)-x(t)) \in \Omega$, 
therefore by using (MP.M) 
$$
\begin{array}{l}
\frac{1}{\varepsilon} (H_M^*(t,\overline{x}(t)+\varepsilon(\overline{x}(t)-x(t)),p(t))-H_M^*(t,\overline{x}(t),p(t))) \\
\ge \frac{1}{\varepsilon} (H_M(t,\overline{x}(t)+\varepsilon(\overline{x}(t)-x(t)),\overline{u}(t),p(t))-H_M(t,\overline{x}(t),\overline{u}(t),p(t))).
\end{array}
$$ 
Therefore, since (S{\sc hm}2) and (S{\sc v}2), when $\varepsilon \rightarrow 0$ we have $D_{G,2}H_M^*(t,\overline{x}(t),p(t))\cdot (\overline{x}(t)-x(t)) \ge D_{G,2}H_M(t,\overline{x}(t),\overline{u}(t),p(t))\cdot (\overline{x}(t)-x(t))$.
Therefore, by using (AE.M), we have 
\begin{equation}\label{sm1-1}
-D_{G,2}H_M^*(t,\overline{x}(t),p(t))\cdot (x(t)-\overline{x}(t)) \ge \underline{d}p(t)\cdot (x(t)-\overline{x}(t)).
\end{equation} 
Besides, for all $\varepsilon >0$ small enough, we have $\overline{x}(t)+\varepsilon(x(t)-\overline{x}(t)) \in \Omega$, 
therefore by using (MP.M) and (S{\sc hm}2), we have
$$ 
\begin{array}{l}
\frac{1}{\varepsilon} (H_M^*(t,\overline{x}(t)+\varepsilon(x(t)-\overline{x}(t)),p(t))-H_M^*(t,\overline{x}(t),p(t))) \\
\ge H_M^*(t,x(t),p(t))-H_M^*(t,\overline{x}(t),p(t))\\
\ge H_M(t,x(t),u(t),p(t))-H_M(t,\overline{x}(t),\overline{u}(t),p(t)).
\end{array}
$$
Hence, we have
$$
\begin{array}{l}
H_M(t,\overline{x}(t),\overline{u}(t),p(t))-H_M(t,x(t),u(t),p(t))\\
\ge \frac{1}{\varepsilon} (H_M^*(t,\overline{x}(t),p(t))-H_M^*(t,\overline{x}(t)+\varepsilon(x(t)-\overline{x}(t)),p(t))).
\end{array}
$$
Consequently, when $\varepsilon \rightarrow 0$ and thanks to (AE.M) and (\ref{sm1-1}), we have \\
$H_M(t,\overline{x}(t),\overline{u}(t),p(t))-H_M(t,x(t),u(t),p(t)) \ge \underline{d}p(t)\cdot(x(t)-\overline{x}(t))$.\\
Hence, the assumptions of the Theorem \ref{thsm1} are verified and the conclusions follow.\\
%%%%%%%%%%%%

\subsection{Proof of the Theorem \ref{thsm3}}
Notice that (S{\sc hm}3) implies (S{\sc hm}1).\\
Indeed, let $(x,u)\in Adm({\mathcal M})$, let $t\in [0,T]$, since $[(\xi,\zeta)\mapsto H_M(t,\xi,\zeta,p(t))]$ is G\^ateaux differentiable and concave at $(\overline{x}(t),\overline{u}(t))$, we have $H_M(t,x(t),u(t),p(t))-H_M(t,\overline{x}(t),\overline{u}(t),p(t))\\
\le D_{G,(2,3)}H_M(t,\overline{x}(t),\overline{u}(t),p(t))\cdot(x(t)-\overline{x}(t),u(t)-\overline{u}(t))$.
Therefore, by using (AE.M) and (MP.M), we have \\
$D_{G,(2,3)}H_M(t,\overline{x}(t),\overline{u}(t),p(t))\cdot(x(t)-\overline{x}(t),u(t)-\overline{u}(t))=-\underline{d}p(t)\cdot(x(t)-\overline{x}(t))$.   
Therefore, (S{\sc hm}1) is verified. Hence, the assumptions of the Theorem \ref{thsm1} are verified and the conclusions follow. %%%%%%%%%%%%%%%
\subsection{Proof of the Theorem \ref{thsb1}}
Let $(x,u)\in Adm({\mathcal B})$. By using (S{\sc T}1), we have 
\[
\begin{array}{l}
\sum_{i=1}^{l}\theta_i J_i(x,u)=\sum_{i=1}^{l} \theta_ig_i^0(x(T))+\int_{0}^{T} \sum_{i=1}^{l} \theta_i f^0_i(t,x(t),u(t))dt \\
\le \sum_{i=1}^{l} \theta_ig_i^0(\overline{x}(T))+\sum_{i=1}^{l}\theta_iD_Hg_i(\overline{x}(T))\cdot(x(T)-\overline{x}(T))+\\
\int_{0}^{T} \sum_{i=1}^{l} \theta_i f^0_i(t,x(t),u(t))dt.
\end{array}
\]
By using (TC), we have 
\begin{equation}
\left.
\begin{array}{l}
\sum_{i=1}^{l} \theta_iD_Hg^0_i(\overline{x}(T))\cdot(x(T)-\overline{x}(T))\\
=p(T)\cdot (x(T)-\overline{x}(T))-\sum_{\alpha=1}^{m} \lambda_\alpha D_Hg^\alpha(\overline{x}(T)) \cdot (x(T)-\overline{x}(T))\\
-\sum_{\beta=1}^{q} \mu_\beta D_Hh^\beta(\overline{x}(T))\cdot(x(T)-\overline{x}(T))
\end{array}
\right\}
\end{equation}
Furthermore, by using (Si) and (S${\ell}$), we have for each $\alpha \in\{1,...,m\}$, $\lambda_\alpha g^\alpha(\overline{x}(T)) \le \lambda_\alpha g^\alpha(x(T))$.\\
Consequently, by using (S{\sc T}2), we have for all $\alpha \in\{1,...,m\}$, $\lambda_\alpha D_Hg^\alpha(\overline{x}(T)) \cdot(x(T)-\overline{x}(T))\ge 0$.\\
Besides, thanks to (S{\sc T}3), we have for all $\beta\in\{1,...,q\}$, $\mu_\beta D_Hh^\beta(\overline{x}(T))\cdot(x(T)-\overline{x}(T))=0$.\\
Hence, by using
$$
\begin{array}{l}
\sum_{i=1}^{l}\theta_i D_Hg^0_i(\overline{x}(T))\cdot(x(T)-\overline{x}(T))\\
\le p(T)\cdot (x(T)-\overline{x}(T)) \\
=\int_{0}^{T} \underline{d}(p(t)\cdot(x(t)-\overline{x}(t)))dt\\
=\int_{0}^{T} \underline{d}p(t)\cdot (x(t)-\overline{x}(t)) dt + \int_{0}^{T} p(t)\cdot \underline{d}(x(t)-\overline{x}(t)) dt\\
\le \int_{0}^{T} (H_B(t,\overline{x}(t),\overline{u}(t),p(t),(\theta_i)_{1\le i\le l})-H_B(t,x(t),u(t),p(t),(\theta_i)_{1\le i\le l}))dt+\\
\int_{0}^{T} (p(t)\cdot f(t,x(t),u(t))-p(t)\cdot f(t,\overline{x}(t),\overline{u}(t)))dt\\
=\int_{0}^{T} \sum_{i=1}^{l} \theta_i f^0_i(t,\overline{x}(t),\overline{u}(t))dt-\int_{0}^{T} \sum_{i=1}^{l} \theta_i f^0_i(t,x(t),u(t))dt. 
\end{array}
$$
Therefore, we have 
$$
\begin{array}{l}
\sum_{i=1}^{l}\theta_i J_i(x,u) \le \sum_{i=1}^{l} \theta_ig_i^0(\overline{x}(T))+\int_{0}^{T} \sum_{i=1}^{l} \theta_i f^0_i(t,\overline{x}(t),\overline{u}(t))dt\\
-\int_{0}^{T} \sum_{i=1}^{l} \theta_i f^0_i(t,x(t),u(t))dt+\int_{0}^{T} \sum_{i=1}^{l} \theta_i f^0_i(t,x(t),u(t))dt \\
=\sum_{i=1}^{l}\theta_i J_i(\overline{x},\overline{u}).
\end{array}.
$$
Consequently, $(\overline{x},\overline{u})$ is a solution of the following single optimization problem :
\[
(\mathcal{P}_\theta)
\left\{
\begin{array}{cl}
{\rm Maximize} & \sum_{i=1}^{l} \theta_i J_i(x,u) \\
{\rm subject \;  to} & (x,u)\in Adm({\mathcal B}).
\end{array}
\right.
\]
Now, we assume that $(\theta_i)_{1\le i\le l} \neq 0$. \\
We want to prove that $(\overline{x},\overline{u})$ is a weak Pareto optimal solution. We proceed by contradiction, we assume that $(\overline{x},\overline{u})$ is not a weak Pareto optimal solution i.e. there exists $(x,u)\in Adm({\mathcal B})$ such that for all $i\in \{1,...,l\}$, $J_i(x,u) > J_i(\overline{x},\overline{u})$.\\
Consequently, we have $\sum_{i=1}^{l} \theta_i J_i(x,u) > \sum_{i=1}^{l} \theta_i J_i(\overline{x},\overline{u})$. This is a contradiction with $(\overline{x},\overline{u})$ is a solution of $({\mathcal P}_\theta)$. \\
Next, we assume that for all $i\in\{1,...,l\},$ $\theta_i \neq 0$.\\
We want to prove that $(\overline{x},\overline{u})$ is a Pareto optimal solution. We proceed by contradiction, we assume that $(\overline{x},\overline{u})$ is not a Pareto optimal solution i.e. there exists $(x,u)\in Adm({\mathcal B})$ such that for all $i\in \{1,...,l\}$, $J_i(x,u)\ge J_i(\overline{x},\overline{u})$ and there exists $i_0\in\{1,...,l\}$, $J_{i_0}(x,u)>J_{i_0}(\overline{x},\overline{u})$.
Hence, we obtain that $\sum_{i=1}^{l} \theta_i J_i(x,u) > \sum_{i=1}^{l} \theta_i J_i(\overline{x},\overline{u})$. This is a contradiction with $(\overline{x},\overline{u})$ is a solution of $({\mathcal P}_\theta)$. 
\subsection{Proof of the Theorem \ref{thsb2}}
Notice that (S{\sc hb}2) implies (S{\sc hb}1).\\
Indeed, let $(x,u)\in Adm({\mathcal B})$.\\
We set $\theta=(\theta_i)_{1\le i \le l}$.
For all $t\in [0,T]$, for all $\varepsilon >0$ small enough, we have $\overline{x}(t)+\varepsilon(\overline{x}(t)-x(t)) \in \Omega$, 
therefore by using (MP.B) 
$$
\begin{array}{l}
\frac{1}{\varepsilon} (H_B^*(t,\overline{x}(t)+\varepsilon(\overline{x}(t)-x(t)),p(t),\theta)-H_B^*(t,\overline{x}(t),p(t),\theta)) \\
\ge \frac{1}{\varepsilon} (H_B(t,\overline{x}(t)+\varepsilon(\overline{x}(t)-x(t)),\overline{u}(t),p(t),\theta)-H_B(t,\overline{x}(t),\overline{u}(t),p(t),\theta)).
\end{array}
$$ 
Hence, since (S{\sc hb}2), (S{\sc i}2) and (S{\sc v}2), when $\varepsilon \rightarrow 0$ we have $D_{G,2}H_B^*(t,\overline{x}(t),p(t),\theta)\cdot (\overline{x}(t)-x(t)) \ge D_{G,2}H_B(t,\overline{x}(t),\overline{u}(t),p(t),\theta)\cdot (\overline{x}(t)-x(t))$.
Hence, by using (AE.B), we have 
\begin{equation}\label{sb1-1}
-D_{G,2}H_B^*(t,\overline{x}(t),p(t),\theta)\cdot (x(t)-\overline{x}(t)) \ge \underline{d}p(t)\cdot (x(t)-\overline{x}(t)).
\end{equation} 
Besides, for all $\varepsilon >0$ small enough, we have $\overline{x}(t)+\varepsilon(x(t)-\overline{x}(t)) \in \Omega$, 
hence by using (MP.B) and (S{\sc hb}2), we have
$$
\begin{array}{l}
\frac{1}{\varepsilon} (H_B^*(t,\overline{x}(t)+\varepsilon(x(t)-\overline{x}(t)),p(t),\theta)-H_B^*(t,\overline{x}(t),p(t),\theta)) \\
\ge H_B^*(t,x(t),p(t),\theta)-H_B^*(t,\overline{x}(t),p(t),\theta)\\
\ge H_B(t,x(t),u(t),p(t),\theta)-H_B(t,\overline{x}(t),\overline{u}(t),p(t),\theta).
\end{array}
$$
Hence, we have
$$
\begin{array}{l}
H_B(t,\overline{x}(t),\overline{u}(t),p(t),\theta)-H_B(t,x(t),u(t),p(t),\theta)\\
\ge \frac{1}{\varepsilon} (H_B^*(t,\overline{x}(t),p(t),\theta)-H_B^*(t,\overline{x}(t)+\varepsilon(x(t)-\overline{x}(t)),p(t),\theta)).

\end{array}
$$
Consequently, when $\varepsilon \rightarrow 0$, from (\ref{sb1-1}), we have \\
$H_B(t,\overline{x}(t),\overline{u}(t),p(t),\theta)-H_B(t,x(t),u(t),p(t),\theta) \ge \underline{d}p(t)\cdot(x(t)-\overline{x}(t))$.\\
Hence, the assumptions of the Theorem \ref{thsb1} are verified and the conclusions follow.\\
  %%%%%
  \subsection{Proof of the Theorem \ref{thsb3}}
  Notice that (S{\sc hb}3) implies (S{\sc hb}1).\\
Indeed, let $(x,u)\in Adm({\mathcal B})$, let $t\in [0,T]$, since $[(\xi,\zeta)\mapsto H_B(t,\xi,\zeta,p(t),(\theta_i)_{1\le i\le l})]$ is G\^ateaux differentiable and concave at $(\overline{x}(t),\overline{u}(t))$, we have \\$H_B(t,x(t),u(t),p(t),(\theta_i)_{1\le i\le l})-H_B(t,\overline{x}(t),\overline{u}(t),p(t),(\theta_i)_{1\le i\le l}) \\ \le D_{G,(2,3)}H_B(t,\overline{x}(t),\overline{u}(t),p(t),(\theta_i)_{1\le i\le l})\cdot(x(t)-\overline{x}(t),u(t)-\overline{u}(t))$.
Therefore, by using (AE.B) and (MP.B), we have \\
$D_{G,(2,3)}H_B(t,\overline{x}(t),\overline{u}(t),p(t),(\theta_i)_{1\le i\le l})\cdot(x(t)-\overline{x}(t),u(t)-\overline{u}(t))=-\underline{d}p(t)\cdot(x(t)-\overline{x}(t))$.   
Hence, (S{\sc hb}1) is verified. Therefore,the assumptions of the Theorem \ref{thsb1} are verified and the conclusions follow.

\end{document}